\newcommand{\intO}{\int \limits_\Omega}
\newcommand{\var}[1] {\intO |D{#1}|}
\newcommand{\Div}{\mathrm{Div}}
\newcommand{\Proj}{\mathrm{Proj}}
\newcommand{\Dist}{\mathrm{Dist}}
\newcommand{\MRR}{M^n \times \mathbb R}
\newcommand{\surfarea}[1] {\intO \sqrt{1 + |D{#1}|^2}}
\newcommand{\loc}{\mathrm{loc}}
\newcommand{\lonorm}[2]{\| {#1} \|_{L^1({#2})}}
\newcommand{\suppt}{\text{suppt }}
\newcommand{\xh}{x_{n+1}}
\newcommand{\la}{\langle}
\newcommand{\ra}{\rangle}
\newcommand{\ue}{u^\epsilon}
\newcommand{\ued}{u^{\epsilon\delta}}
\newcommand{\dx}{\, d x}
\newcommand{\dy}{\, dy}
\newcommand{\dt}{\, dt}
\newcommand{\fe}{F^\epsilon}
\newcommand{\is}{i^*}
\newcommand{\js}{j^*}
\newcommand{\ks}{k^*}
\newcommand{\intAMt}{\int\limits_{A_M(t)}}
\newcommand{\Se}{{S^\epsilon(t)}}
\newcommand{\PhiFn}{\bold \Phi}
\newcommand{\PsiFn}{\bold \Psi}
\DeclareMathOperator{\Lip}{Lip}
\DeclareMathOperator{\Ric}{Ric}
\newtheorem{theorem}{Theorem}[section]
\newtheorem{lemma}[theorem]{Lemma}
\newtheorem{proposition}[theorem]{Proposition}
\newtheorem{corollary}[theorem]{Corollary}
\newtheorem*{introA}{Theorem 6.4}
\newtheorem*{introB}{Theorem 7.3}
\newtheorem*{introC}{Theorem 8.7}
\theoremstyle{definition}
\newtheorem{definition}[theorem]{Definition}
\theoremstyle{remark}
\newtheorem{remark}[theorem]{Remark}
\numberwithin{equation}{section}
\begin{document}
\setlength{\baselineskip}{1.2\baselineskip}

\title[Minimal Graphs and Graphical Mean Curvature Flow in $M^n\times \mathbb R$]
{Minimal Graphs and Graphical Mean Curvature Flow in $M^n\times \mathbb R$ }

\author{Matthew McGonagle}
\address{Department of Mathematics, Johns Hopkins University,
 Baltimore, MD 21218}
\email{mcgonagle@math.jhu.edu}

\author{Ling Xiao}
\address{Department of Mathematics, Johns Hopkins University,
 Baltimore, MD 21218}
\email{lxiao@math.jhu.edu}

\begin{abstract}
In this paper, we investigate the problem of finding
minimal graphs in $M^n\times\mathbb R$ with general boundary conditions using a variational approach. Following Giusti\cite{Giu}, we look at so called generalized solutions that minimize a functional adapted from the area functional. Generalizing results in Schulz-Williams \cite{SW} to $M^n\times\mathbb R,$ we show that for certain conditions on our boundary data $\phi$ the solutions obtain the boundary data $\phi(x)$.  Following Oliker-Ural'tseva \cite{OU1993}, \cite{OU1997} we also consider solutions $\ue$ of a perturbed mean curvature flow \eqref{eq:emcf} for $\epsilon > 0$. We show that there are subsequences $\epsilon_i$ where $u^{\epsilon_i}$ converges to a function $u$ satisfying the mean curvature flow, and subsequences $u(\cdot, t_i)$ converge to a generalized solution $\bar u$ of the Dirichlet problem. Furthermore, $\bar u$ depends only on the choice of sequence $\epsilon_i$.

\end{abstract}

\maketitle

\section*{Introduction}\label{int}
The problem of finding minimal surfaces by using calculus of variations, has been widely studied
in Euclidean space. In particular, in \cite{Giu}, Giusti studies the solvability of the Dirichlet problem
for the minimal surface equation in the space of functions of bounded variation. Giusti also discusses the uniqueness and regularity
of the solution.

For a bounded domain $\Omega \subset M^n$, we study the solvability of the Dirichlet problem
\begin{equation}\label{eq1_1}
\begin{aligned}
\Div\frac{Du}{w}&=0\,\mbox{in $\Omega\subset M^n$,}\\
u&=\varphi\,\mbox{on $\partial\Omega.$}
\end{aligned}
\end{equation}
For boundaries with general mean curvature, solutions to the Dirichlet problem may not exist. To use variational methods, it is then necessary to study an adapted area functional \cite{Giu}:
\begin{equation}\label{eq1_3}
J(u, \Omega) = \mathcal A(u,\Omega) + \int \limits_{\partial \Omega} |u-\varphi| dH_{n-1},
\end{equation}
where $\mathcal A$ is the area functional. In this variational setting, we will easily obtain the existence of bounded local minimizers of
$J(u, \Omega)$ in the class $BV(\Omega),$ the class of functions of bounded variation on $\Omega$. In the space $M^n\times\mathbb R,$ some definitions and key ingredients used in the proofs of Giusti \cite{Giu} differ from the euclidean case. We discuss some of the necessary modifications, and for completeness, include some proofs that are similar to those in Giusti\cite{Giu}. For regularity of solutions, we make use of the a priori interior gradient estimates of Spruck \cite{Spr} for $M \times \mathbb R$.

To consider sufficient conditions for when general solutions to the Dirichlet problem achieve the boundary data $\phi(x)$, we generalize results in Schulz-Williams to $M\times\mathbb R$. Barriers are constructed to show that under certain conditions on $\phi(x)$, we have that any general solution must take on the boundary value $\phi(x).$

Finally, we consider the problem of the graphical mean curvature flow on $M \times\mathbb R$:
\begin{equation}\label{eq1_2}
\begin{cases}
u_t(x,t)  =  WH(u) = \triangle_M u - \frac{u^i u^j}{1+|Du|^2} D^2_{ij} u & \text{in } \Omega\times(0,\infty), \\
u(x,t) = \varphi(x) & \text{on } \partial \Omega \times (0,\infty), \\
u(x,0)  = u_0(x). &
\end{cases}
\end{equation}
As in Oliker-Ural'tseva\cite{OU1993}\cite{OU1997}, we study the perturbed mean curvature flow problem:
\begin{equation}\label{eq:0_4}
\begin{cases}
\ue_t(x,t) = \triangle_M \ue - \frac{u^{\epsilon i} u^{\epsilon j}}{W^2}D^2_{ij}\ue - \epsilon \sqrt{1+|D \ue|^2_M} \triangle_M \ue & \text{in } \Omega\times(0,\infty), \\
\ue(x,t) = \phi(x) & \text{on } \partial\Omega \times (0,\infty), \\
\ue(x,0) = u_0(x).
\end{cases}
\end{equation}
We consider the convergence of the solutions $\ue$ as $\epsilon \to 0$. Specifically, we show that there are sequences $\epsilon_i \to 0$, such that the solutions $u^{\epsilon_i}$ converge in $C^\infty$ on compact subsets to a function $u$ that satisfies mean curvature flow. We also investigate the convergence of $u(\cdot, t)$ to a general solution of the Dirichlet problem as $t\to \infty$.

 The existence of a minimizer of $J(u,\phi)$ in $BV(\Omega)$ is guaranteed by a compactness theorem\ref{fbvt2} for $u\in BV(\Omega)$. Our first main result is a statement on the interior regularity of minimizers.
 \begin{introA}\label{th1_1}
 Let $u\in BV(\Omega)\cap L^{\infty}(\Omega)$ be a local minimizer to \eqref{eq1_3}. Then $u\in C^{\infty}(\Omega).$
 \end{introA}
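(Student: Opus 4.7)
The plan follows Giusti's classical strategy for regularity of local minimizers of the area functional, adapted to $M^n \times \mathbb R$ and supplemented by Spruck's interior gradient estimate. The overall reduction is: interior smoothness of $u$ will follow from smoothness of $\partial U$, where $U = \{(x,t) \in \Omega \times \mathbb R : t < u(x)\}$ is the subgraph, combined with standard elliptic bootstrap once $u$ is known to be $C^{1,\alpha}_{\loc}$.

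First, I would verify that $U$ is a local perimeter minimizer in cylinders $\Omega' \times \mathbb R$ with $\Omega' \Subset \Omega$. This uses two facts: the perimeter of $U$ in $\Omega' \times \mathbb R$ equals the area functional $\mathcal A(u,\Omega')$, and any competitor Caccioppoli set $V$ that agrees with $U$ outside a compact subset of $\Omega' \times \mathbb R$ can be replaced by the subgraph of a BV competitor $v$ to $u$ supported away from $\partial\Omega$, so that the boundary penalty $\int_{\partial\Omega}|u-\varphi|\,dH_{n-1}$ in $J$ is unchanged. Boundedness $u\in L^\infty$ ensures these competitors stay bounded and the slicing arguments are valid.

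Next, I would invoke the De Giorgi--Federer regularity theorem for perimeter-minimizing boundaries in the Riemannian product $M^n \times \mathbb R$: the reduced boundary $\partial^* U$ is a $C^{1,\alpha}$ embedded hypersurface away from a closed singular set $\Sigma$ of Hausdorff dimension at most $n-7$. I would then eliminate $\Sigma \cap (\Omega \times \mathbb R)$ using the graph structure, following Miranda's argument: vertical translations $(x,t)\mapsto(x,t+h)$ are isometries of $M^n \times \mathbb R$, so $\partial U$ is foliated by its own translates, and the strong maximum principle for minimal hypersurfaces rules out both singular points and vertical portions. This forces $\partial^* U \cap (\Omega \times \mathbb R)$ to be the graph of a $C^{1,\alpha}_{\loc}$ function which must agree with $u$, making $u$ a classical solution of $\Div(Du/w) = 0$.

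Finally, once $u$ is locally bounded and continuous, Spruck's a priori interior gradient estimate provides locally uniform bounds on $|Du|$ depending only on $\|u\|_{L^\infty}$ and the geometry of $M$. The minimal surface equation is then uniformly elliptic on any interior subdomain, and standard Schauder theory applied to difference quotients bootstraps $u$ to $C^{k,\alpha}_{\loc}$ for every $k$, giving $u \in C^\infty(\Omega)$. The main obstacle is the middle step: transferring De Giorgi's $\epsilon$-regularity and Miranda's ``no singular points on graphs'' argument to the product manifold $M^n \times \mathbb R$. The perimeter regularity carries over since $M$ has bounded geometry on compact subsets, and the graph argument goes through essentially verbatim because it uses only vertical isometries, which are preserved by the product structure with $\mathbb R$.
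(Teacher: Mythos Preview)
Your proposal is correct in outline but follows a genuinely different route from the paper's proof of Theorem~6.4. Both arguments begin the same way: the subgraph $U$ is a local perimeter minimizer, and De Giorgi--Federer regularity gives a $C^{1,\alpha}$ reduced boundary away from a singular set $\Sigma$ with $H_{n-6}(\Sigma)=0$. From there the two diverge.

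The paper does \emph{not} attempt to show $\Sigma=\emptyset$. In Lemma~6.1 it uses the Jacobi-type equation $\Delta_S\nu_{n+1}+(|A|^2+\Ric(N))\nu_{n+1}=0$ and the strong maximum principle on the \emph{regular} part to get $\nu_{n+1}>0$ there, concluding only that $u$ is smooth on $\Omega\setminus\Proj\Sigma$. After showing $u\in W^{1,1}_{\loc}$ (Proposition~6.2) and uniqueness up to constants (Proposition~6.3), the actual proof of Theorem~6.4 runs an approximation scheme: on a small ball $\mathcal B$ one solves classical Dirichlet problems for the minimal surface equation with smooth boundary data $\phi_j$ agreeing with $u$ on $\partial\mathcal B\setminus S_j$ (where $S_j\searrow\Proj\Sigma$), applies Spruck's interior gradient estimate to the auxiliary solutions $u_j$ to extract a locally Lipschitz limit $v$, verifies that $v$ also minimizes $J$ on $\mathcal B$ with trace $u$, and concludes $u=v$ by uniqueness. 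Spruck's estimate enters through the $u_j$, not through $u$ directly.

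Your route---kill $\Sigma$ outright via vertical translations and a strong maximum principle, read off that $\partial U$ is globally a $C^{1,\alpha}$ graph, then Schauder bootstrap---is cleaner once granted, and makes your invocation of Spruck essentially redundant. But the clause ``the strong maximum principle for minimal hypersurfaces rules out both singular points and vertical portions'' is where the real work hides: you need either a maximum principle valid across singularities (Solomon--White / Ilmanen type) or a density argument combining $\nu_{n+1}>0$ on $\partial^*U$ with the smallness of $\Sigma$. The paper's approximation-by-Dirichlet device sidesteps this entirely by never asserting $\Sigma=\emptyset$; your approach trades that machinery for a sharper, but more delicate, maximum-principle step in the product manifold.
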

By constructing appropriate barriers, we show
\begin{introB}\label{the1_2}
Let $\Omega$ be a bounded open subset of $M^n$ with $C^2$ boundary $\partial\Omega.$
Given $n\geq 2,$ $K\in\left(0, \frac{1}{\sqrt{(n-1)\gamma}}\right),$ and $\gamma>1.$
Suppose $\varphi$ is Lipschitz continuous on $\partial\Omega$ and
\begin{equation}|\varphi(x)-\varphi(y)|\leq K\|x-y\|_{M^n},\,\,\mbox{if $x, y\in\partial\Omega$},\end{equation}
\begin{equation}\sup\limits_{\partial\Omega}\varphi(x)-\inf\limits_{\partial\Omega}\varphi(x)\leq\epsilon,\end{equation}
where $\epsilon$ depends on $n,$ $K,$ $M^n,$ and $\partial\Omega.$
Then there exists a function $u\in C^2(\Omega)\cap C^0(\bar\Omega)$ such that u is the solution of the equation \eqref{eq1_1}.
\end{introB}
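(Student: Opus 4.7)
The strategy is to start from a generalized solution produced by the variational compactness, use Theorem 6.4 to obtain interior regularity, and then construct explicit local barriers at each boundary point that force the generalized solution to attain $\varphi$ continuously. First I would minimize the functional $J(u,\Omega)$ of \eqref{eq1_3} over $BV(\Omega)$. The compactness theorem referenced in the text produces a minimizer $u\in BV(\Omega)$, and truncation against the constants $\sup_{\partial\Omega}\varphi$ and $\inf_{\partial\Omega}\varphi$ gives $u\in L^{\infty}(\Omega)$. Theorem 6.4 then gives $u\in C^{\infty}(\Omega)$, and the Euler--Lagrange equation for $J$ forces $u$ to solve \eqref{eq1_1} in $\Omega$. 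It therefore only remains to prove $u\in C^0(\bar\Omega)$ with $u|_{\partial\Omega}=\varphi$.

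The main work is the barrier construction. Fix $y_0\in\partial\Omega$ and let $d(x)=\Dist(x,\partial\Omega)$; on a normal tubular neighborhood $U$ of $\partial\Omega$ the function $d$ is $C^2$. I would introduce a local upper barrier of the form
\[
\bar v(x) = \varphi(y_0) + K\,\rho(x,y_0) + \psi\bigl(d(x)\bigr),
\]
where $\rho(x,y_0)$ is the tangential distance along $\partial\Omega$ from $y_0$ to the nearest-point projection of $x$ on $\partial\Omega$, and $\psi$ is a concave increasing function with $\psi(0)=0$ and $\psi'(0)$ large; a natural choice is $\psi(d)=A\log(1+Bd)$ with parameters $A,B$ to be tuned. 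The Lipschitz hypothesis $|\varphi(x)-\varphi(y_0)|\leq K\|x-y_0\|_{M^n}$ gives $\bar v\geq\varphi$ on $U\cap\partial\Omega$, while the oscillation hypothesis together with a suitable choice of $\psi$ pushes $\bar v\geq\sup_\Omega u$ on the interior portion $\partial U\cap\Omega$. A symmetric construction yields a lower barrier $\underline v$.

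The crux is to verify that $\bar v$ is a supersolution of \eqref{eq1_1} on $U\cap\Omega$. Expanding $\Div(D\bar v/w)$ produces, beyond the dominant term $\psi''/(1+|\psi'|^2)^{3/2}$, curvature-type corrections controlled by the second fundamental form of $\partial\Omega$, the Hessian of $d$, and through the latter the ambient Ricci curvature of $M$ on the collar. The constraint $K<1/\sqrt{(n-1)\gamma}$ with $\gamma>1$ is precisely what allows $\psi''$ to dominate this perturbation and make $\bar v$ a supersolution; the smallness of $\sup_{\partial\Omega}\varphi-\inf_{\partial\Omega}\varphi$ (depending on $n$, $K$, $M^n$, $\partial\Omega$) is what lets $\psi$ remain bounded while still lying above $u$ on $\partial U\cap\Omega$. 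A comparison argument carried out directly against the functional $J$, by testing $\min(u,\bar v)$ as a competitor against $u$ to exclude $\{u>\bar v\}$ having positive measure near $y_0$, then gives $u\leq\bar v$ in $U\cap\Omega$, and symmetrically $u\geq\underline v$. Since $\bar v(y_0)=\underline v(y_0)=\varphi(y_0)$ and both barriers are continuous, $u$ attains $\varphi(y_0)$ continuously at $y_0$.

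The principal obstacle I expect is the barrier computation itself. Unlike in the Euclidean Schulz--Williams setting, the Hessian of $d$ on $M^n$ picks up curvature corrections through a Riccati-type equation, and the mean curvatures of the level sets of $d$ carry extra Ricci terms. Packaging these so that the clean threshold $K<1/\sqrt{(n-1)\gamma}$ emerges will require choosing $\gamma$ to absorb simultaneously an upper bound on the second fundamental form of $\partial\Omega$ and an upper bound for the ambient Ricci curvature of $M$ on a fixed collar; the allowed oscillation $\epsilon$ will then appear explicitly in terms of $A$, $B$, and the size of this collar.
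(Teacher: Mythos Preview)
Your overall architecture---minimize $J$, truncate to get $L^\infty$, apply Theorem~6.4 for interior smoothness, then build one-sided barriers at each boundary point---matches the paper exactly. The divergence is in the barrier itself, and there your construction has a genuine gap.

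You propose $\bar v=\varphi(y_0)+K\rho+\psi(d)$ with $\psi$ concave and $\psi'(0)$ large, and you assert that the ``dominant term'' in $\Div(D\bar v/W)$ is $\psi''/(1+(\psi')^2)^{3/2}$. That term is not dominant; it is in fact the smallest one. Writing $g^{ij}=\sigma^{ij}-\bar v^i\bar v^j/W^2$ and noting that near $y_0$ the gradient of $\bar v$ is essentially $\psi'(d)\,\nabla d$, one finds $g^{ij}d_id_j=O(W^{-2})$, so the $\psi''$ contribution to $W^{-1}g^{ij}\bar v_{ij}$ is $O(\psi''/W^3)\to 0$. By contrast, the contribution $\psi'\,g^{ij}d_{ij}/W$ converges to the (tangential) mean curvature of $\partial\Omega$, which is of order one and has no sign hypothesis in this theorem. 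The remaining piece $K\,g^{ij}\rho_{ij}/W$ is even worse: for tangential distance $\rho$ the Hessian blows up like $\rho^{-1}$ along the normal line through $y_0$. Thus your barrier is essentially a Serrin barrier, which needs mean-convexity of $\partial\Omega$---precisely the hypothesis Schulz--Williams and this paper are designed to avoid. The condition $K<1/\sqrt{(n-1)\gamma}$ does not enter the inequality in the way you describe.

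The paper instead uses the Schulz--Williams square-root barrier. In geodesic normal coordinates at $x^0$ with inward normal $e_n$ and $\partial\Omega=\{x_n=w(x')\}$, one sets
\[
v(x)=\bigl(K^2\textstyle\sum_i x_i^2+2\alpha(x_n-w(x'))\bigr)^{1/2}.
\]
The point of the square root is algebraic: writing $v=\psi^{1/2}$ one has $v_{ij}=\tfrac{1}{2v}\psi_{ij}-\tfrac{1}{v}v_iv_j$, so $g^{ij}v_{ij}=-\tfrac{1}{v}\bigl(g^{ij}v_iv_j-\tfrac12 g^{ij}\psi_{ij}\bigr)$. As $x\to x^0$ one checks $W\to\infty$ with gradient direction tending to $e_n$, hence $g^{ij}\to\delta_{ij}-\delta_{in}\delta_{jn}$, $g^{ij}v_iv_j\to 1$, and $\tfrac12 g^{ij}\psi_{ij}\to K^2(n-1)+O(\alpha)$. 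Therefore
\[
Qv\;\longrightarrow\;-\tfrac{1}{v}\bigl(1-K^2(n-1)+O(\alpha)\bigr),
\]
and the hypothesis $K^2(n-1)<1/\gamma<1$ gives $Qv<0$ for $\alpha$ small, with no assumption on the mean curvature of $\partial\Omega$. The Christoffel and $w_{ij}$ terms are lower-order perturbations absorbed by the strict inequality, which is where the dependence of $\epsilon$ on $M^n$ and $\partial\Omega$ enters. This is the computation you should carry out; your $\log$-type profile in the normal direction cannot produce the needed sign without a boundary curvature hypothesis.
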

Our last main result is an application to the graphical mean curvature flow.
\begin{introC}\label{the1_3}
Let $\Ric_M \geq 0$ and let $\Omega \subset M$ be a bounded domain. There exists a sequence $\epsilon_i \to 0$ such that the solutions $u^{\epsilon_i}$ of \eqref{eq:0_4} converges uniformly to a function $u$ in $C^\infty(K)$ for compact $K \subset\subset \Omega\times (0,\infty)$. We have that $u\in C^\infty(\Omega\times (0,\infty)) \cup L^\infty([0,\infty); W^{1,1}(\Omega)).$

Consider any sequence $t_i$ such that $u(\cdot, t_i)$ converges uniformly to a function $\bar u$ in $C^\infty(K)$ for compact $K \subset\subset \Omega$. We have that $\bar u$ is a generalized solution to the Dirichlet problem with boundary data $\phi(x)$. Furthermore, $\bar u$ depends only on the choice of sequence $\epsilon_i$; $\bar u$ is independent of the choice of $t_i$.
\end{introC}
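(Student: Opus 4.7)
The approach combines uniform (in $\epsilon$) local parabolic estimates for \eqref{eq:0_4} with the energy monotonicity of $J$ along the limiting mean curvature flow, followed by an $L^1$--contraction argument to identify the long--time limit. I split the argument into three stages, mirroring the three assertions of the theorem.

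\emph{Stage one: existence of the limiting flow $u$.}
For each $\epsilon>0$ the equation \eqref{eq:0_4} is uniformly parabolic, so classical quasilinear parabolic theory supplies a smooth global solution $\ue$ on $\Omega\times[0,\infty)$. The task is to extract estimates that survive $\epsilon\to 0$. A maximum--principle argument gives $\|\ue\|_{L^\infty}\le C$ uniformly in $\epsilon$. For interior gradient control I would run a Bernstein--type argument on $W=\sqrt{1+|D\ue|^2_M}$ against a spacetime cutoff, exactly in the spirit of the interior estimates of Spruck cited in the paper; the hypothesis $\Ric_M\ge 0$ is used to absorb the Ricci term produced when commuting covariant derivatives on $M$. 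The extra $\epsilon$--term in \eqref{eq:0_4} is of favorable sign in such a computation, so these bounds are $\epsilon$--uniform. Schauder estimates propagate to all higher derivatives on compact subsets of $\Omega\times(0,\infty)$, and a diagonal extraction produces $\epsilon_i\to 0$ with $u^{\epsilon_i}\to u$ in $C^\infty_{\loc}(\Omega\times(0,\infty))$. Passing to the limit in \eqref{eq:0_4} gives \eqref{eq1_2} in the interior, since $\epsilon\sqrt{1+|D\ue|^2_M}\triangle_M\ue=O(\epsilon)$ on compacts.

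\emph{Stage two: $W^{1,1}$--bound uniform in $t$ and subsequential limits.}
A direct computation shows $t\mapsto\mathcal A(\ue(\cdot,t),\Omega)$ is non--increasing along \eqref{eq:0_4}, so by Fatou the same holds for $\mathcal A(u(\cdot,t),\Omega)$, and combined with the uniform $L^\infty$ bound this yields $u\in L^\infty([0,\infty);W^{1,1}(\Omega))$. For any sequence $t_i\to\infty$ the BV--compactness statement \ref{fbvt2} together with the uniform interior estimates supplies a subsequence along which $u(\cdot,t_i)\to \bar u$ in $L^1(\Omega)$ and in $C^\infty_{\loc}(\Omega)$. The parabolic energy identity $\int_0^\infty\int_\Omega |u_t|^2/W\,\dx\,\dt<\infty$ forces $u_t(\cdot,t_i)\to 0$ in $L^2_{\loc}$ along a further subsequence, so $\bar u$ is a smooth stationary solution of $\Div(Du/W)=0$ in $\Omega$ realizing $J(\bar u,\Omega)=\lim_{t\to\infty}J(u(\cdot,t),\Omega)$. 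This limiting value must equal $\inf_{BV}J$, identifying $\bar u$ as a generalized solution in the sense of Theorem~6.4.

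\emph{Stage three: $\bar u$ depends only on $\epsilon_i$.}
This is the main obstacle. Given one subsequential limit $\bar u$, set $\Phi(t):=\int_\Omega |u(x,t)-\bar u(x)|\dx$. Since $\bar u$ is smooth and stationary in $\Omega$ while $u(\cdot,t)$ solves mean curvature flow there, the difference $u-\bar u$ satisfies a degenerate linear parabolic equation of divergence form in the interior; applying Kato's inequality produces the differential inequality $\Phi'(t)\le 0$ up to a boundary flux. The delicate point is precisely this boundary contribution: $\bar u$ need not attain the data $\varphi$ on $\partial\Omega$, and $u(\cdot,t)$ does so only in the generalized $BV$--trace sense. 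I would handle this by approximating $\bar u$ by functions $\bar u_\delta\in C^\infty(\bar\Omega)$ that agree with $\varphi$ near $\partial\Omega$ and absorbing the error using the penalty term $\int_{\partial\Omega}|\bar u-\varphi|\,dH_{n-1}$ in $J(\bar u,\Omega)$, then letting $\delta\to 0$. Once $\Phi$ is monotone non--increasing, $\Phi(t_i)\to 0$ along the selected subsequence forces $\Phi(t)\to 0$ for every $t\to\infty$, so every subsequential long--time limit coincides with $\bar u$. Thus $\bar u$ is determined by the choice of $\epsilon_i$ alone, completing the theorem.
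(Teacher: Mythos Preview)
Your Stage 3 is where the proposal genuinely parts ways with the paper, and unfortunately it is also where it breaks. The $L^1$--contraction idea $\Phi(t)=\int_\Omega|u(\cdot,t)-\bar u|$ requires integrating by parts on all of $\Omega$, but the limit $u$ is only known to satisfy the flow in the interior; you have no control on $Du$ near $\partial\Omega$ (the interior gradient estimates degenerate there) and no reason to believe $u(\cdot,t)$ attains $\phi$ on $\partial\Omega$. Your proposed fix---approximate $\bar u$ by $\bar u_\delta$ equal to $\phi$ near the boundary and absorb the error into the penalty term of $J$---does not close: the boundary flux in Kato's inequality involves the normal derivative of $u$, not of $\bar u$, and that quantity is precisely what is unbounded. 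The paper itself flags this obstruction, noting that the direct approach of Oliker--Ural'tseva for $\mathbb R^n$ uses barriers on the mean--convex part of $\partial\Omega$, which need not exist for general $M$.

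The paper's route around this is quite different and is the main new idea: work instead with $u^\epsilon$ for \emph{fixed} $\epsilon>0$, where the equation is uniformly parabolic up to the boundary and $u^\epsilon(\cdot,t)$ genuinely equals $\phi$ on $\partial\Omega$. One shows that any time--sequential limit $\bar u^\epsilon$ of $u^\epsilon(\cdot,t_k)$ minimizes the strictly convex functional $E^\epsilon(w)=\int_\Omega F^\epsilon(x,Dw)$ among $w\in W^{1,2}$ with $w=\phi$ on $\partial\Omega$; strict convexity (coming from the $\tfrac{\epsilon}{2}|Dw|^2$ term) forces $\bar u^\epsilon$ to be unique. Then for two time--sequences $t_i,s_i$ with limits $\bar u,\bar v$ of $u$, a simple triangle inequality through $u^{\epsilon_j}$ and the uniqueness of $\bar u^{\epsilon_j}$ gives $\bar u=\bar v$. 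The uniqueness is thus inherited from the $\epsilon$--level, not proved directly for $u$.

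A secondary remark on Stage 1: the paper does not run a direct Bernstein/maximum--principle computation for $W$. It follows the Oliker--Ural'tseva two--step iteration: first an $L^2$--based De Giorgi iteration using the Euclidean Sobolev inequality on small geodesic balls to get $\epsilon W\le C$, and then a second iteration using the Ladyzhenskaya--Ural'tseva Sobolev inequality on the graph $S^\epsilon(t)$ to obtain $W\le C$ uniformly in $\epsilon$. Your assertion that the $\epsilon$--term has ``favorable sign'' in a Bernstein computation is not justified, and in any case is not how the uniform interior gradient bound is obtained here.
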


\section*{Acknowledgments}
We would like to thank Professor Spruck for his guidance and suggestions.

\medskip
\section{Functions of Bounded Variation}\label{fbv}


For any vector bundle $V$ over $M$, we will denote the $C_0^1$ sections of $V$ over an open set $\Omega \subset M$ by
$\Gamma_0^1 (V)$.

\begin{definition}\label{fbvd1}
Let $\Omega$ be a bounded open subset of $M$. The variation of a measurable function $f$ is defined by
\begin{equation}
\var f \equiv \sup \{  \intO f \Div g\ : g \in \Gamma_0^1(T\Omega) \text{ and } |g| \leq 1 \text{ pointwise } \}.
\end{equation}
A function $f$ with $\var f < \infty$ is called a function of bounded variation on $\Omega$. We abbreviate this as $f \in BV(\Omega)$. We will also use $\| f\|_{BV(\Omega)} \equiv |f|_{L^1(\Omega)} + \var f$.
\end{definition}
 Note, that by the Riesz Representation Theorem, we have that for test functions $g$ with support contained in a local coordinate chart, there exists $\eta \in T\Omega$ with $|\eta| = 1$ and
\begin{equation}
\intO f \Div g = \intO g_{ij} \eta^i g^j d|Df|.
\end{equation}
Note, that $\eta$ is actually independent of coordinates, and so gives us a well defined $\eta \in T\Omega$. For $X \in C^1(T\Omega)$ (not necessarily compactly supported) we may then define
\begin{equation}
\intO \la Df, X \ra \equiv \intO g_{ij} \eta^i X^j \, d|Df|
\end{equation}

To extend the notion of the surface area of the graph of a function $f\in C^{0,1}(M)$ to functions $f\in BV(\Omega)$, we make an additional definition.

\begin{definition}\label{fbvd2}
Let $\Omega$ be a bounded open subset of $M$ and $f \in BV(\Omega)$. We define
\begin{equation}
\surfarea{f} \equiv \sup \{  \intO g_{n+1}+f\Div g :g \in \Gamma_0^1(T\Omega),
\end{equation}
\begin{equation}
g_{n+1} \in C_0^1(\Omega), \text{ and } |(g, g_{n+1})| \leq 1 \text{ pointwise }  \}.
\end{equation}
We willl also denote this quantity by $\mathcal{A}(u,\Omega)$.
\end{definition}

Clearly, we have that
\begin{equation}
\var f \leq \surfarea f \leq \var f + |\Omega|.
\end{equation}
Also, if $f \in W^{1,1}(\Omega)$, then
\begin{equation}
\surfarea f = \intO \sqrt{1 + |\nabla f|^2}.
\end{equation}


\begin{lemma}[Lower Semi-Continuity]\label{fbvl1}
If $u_j \to u$ in $L^1_\loc (\Omega)$ then
\begin{equation}
 \surfarea u \leq \liminf \limits_{j\to\infty} \surfarea{u_j} .
\end{equation}
\end{lemma}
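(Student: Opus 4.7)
The plan is to argue directly from the definition of $\mathcal{A}(u,\Omega)$ as a supremum over admissible test pairs, exploiting the compact support of the test functions to convert $L^1_{\loc}$ convergence of $u_j$ into convergence of the relevant integrals, and then taking the supremum on one side.

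First I would fix an arbitrary admissible test pair $(g, g_{n+1})$ with $g \in \Gamma_0^1(T\Omega)$, $g_{n+1} \in C_0^1(\Omega)$, and $|(g,g_{n+1})| \leq 1$ pointwise. Let $K \subset \Omega$ be a compact set containing $\supp g \cup \supp g_{n+1}$. Since $\Div g$ is a continuous function supported in $K$, it is in particular bounded, so
\begin{equation}
\left| \int_\Omega (u_j - u)\, \Div g \right| \leq \|\Div g\|_{L^\infty(\Omega)} \, \|u_j - u\|_{L^1(K)} \longrightarrow 0
\end{equation}
by the assumed $L^1_{\loc}$ convergence. The $g_{n+1}$-term is independent of $j$, and hence
\begin{equation}
\int_\Omega g_{n+1} + u\, \Div g = \lim_{j \to \infty} \int_\Omega g_{n+1} + u_j\, \Div g.
\end{equation}

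Next, since each $(g, g_{n+1})$ is admissible in the definition of $\mathcal{A}(u_j, \Omega)$, we have
\begin{equation}
\int_\Omega g_{n+1} + u_j\, \Div g \leq \surfarea{u_j}
\end{equation}
for every $j$. Passing to the $\liminf$ in $j$ and using the limit above yields
\begin{equation}
\int_\Omega g_{n+1} + u\, \Div g \leq \liminf_{j \to \infty} \surfarea{u_j}.
\end{equation}
Since the right-hand side is independent of the particular admissible pair $(g, g_{n+1})$, taking the supremum over all such pairs on the left gives the claimed inequality $\surfarea{u} \leq \liminf_{j \to \infty} \surfarea{u_j}$.

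There is no serious obstacle here; the only thing to watch is that the definition of $\mathcal{A}$ uses $C_0^1$ (compactly supported) test functions, which is precisely what makes $L^1_{\loc}$ convergence of $u_j$ sufficient rather than requiring $L^1$ convergence on all of $\Omega$. Note also that the argument does not require $u \in BV(\Omega)$ a priori: if $\liminf_{j \to \infty} \surfarea{u_j} = +\infty$ the conclusion is trivial, and otherwise the inequality itself, combined with $\mathcal{A}(u,\Omega) \geq \var{u}$, shows that $u \in BV(\Omega)$ as a byproduct.
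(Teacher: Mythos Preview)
Your proof is correct and follows essentially the same approach as the paper: fix an admissible test pair, use compact support of $\Div g$ together with $L^1_{\loc}$ convergence to pass to the limit in the integral, bound by $\surfarea{u_j}$, and then take the supremum. The paper compresses this into a single displayed line, while you have spelled out the details (and added the helpful observation that $u\in BV(\Omega)$ need not be assumed a priori), but the argument is the same.
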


\begin{proof}
For any $(g, g_{n+1}) \in \Gamma_0^1 (T\Omega \times \mathbb R),$ we have
\begin{equation}
 \intO g_{n+1} + u\Div g = \lim_j \intO g_{n+1} + u_j \Div g \leq \liminf_j \surfarea{u_j}.
\end{equation}
\end{proof}

\begin{remark}
Note, that we don't get continuity. Consider the sets $S_i = [-2,2]^2 - ([-1/i, 1/i]\times[-1,1]) \subset \mathbb R^2$ and the functions $f_i = \chi_{S_i}$.
\end{remark}


An argument made in Giusti \cite[Theorem 1.17]{Giu} shows that

\begin{theorem}\label{fbvt1}
Let $\Omega$ be a bounded open set in $M$ and $f \in BV(\Omega)$. Then, there exists a sequence $f_j \in C^\infty(\Omega)$ such that $f_j \to f$ in $L^1(\Omega)$ and $\var{f_j} \to \var{f}$.
\end{theorem}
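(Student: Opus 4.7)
The strategy is to mimic Giusti's classical Euclidean argument (see \cite[Thm.~1.17]{Giu}): given $\eta > 0$, construct a single smooth $f_\eta$ with $\|f_\eta - f\|_{L^1(\Omega)} < \eta$ and $\var{f_\eta} < \var{f} + \eta$, then take a diagonal sequence $f_j := f_{1/j}$. The variation analogue of Lemma~\ref{fbvl1} gives the reverse inequality $\var{f} \leq \liminf_j \var{f_j}$ for free, so the only real work is the upper bound $\var{f_\eta} < \var{f} + \eta$.

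To construct $f_\eta$, I would first exhaust $\Omega$ by open sets $\Omega_0 \subset\subset \Omega_1 \subset\subset \Omega_2 \subset\subset \cdots$ with $\Omega = \bigcup_k \Omega_k$, set $A_k := \Omega_{k+1} \setminus \bar{\Omega}_{k-1}$ (with $\Omega_{-1} = \emptyset$), and choose a partition of unity $\{\phi_k\} \subset C_c^\infty(A_k)$ with $\sum_k \phi_k \equiv 1$ on $\Omega$. After refining so that each $A_k$ is covered by finitely many coordinate charts in which a standard Euclidean mollifier $\rho_\epsilon$ is defined, I put
\[
f_\eta := \sum_k \rho_{\epsilon_k}(\phi_k f),
\]
choosing the scales $\epsilon_k > 0$ small enough (depending on $\|\phi_k f\|_{L^1}$, $\|f\, D\phi_k\|_{L^1}$, and the distance from $\suppt\phi_k$ to $\partial A_k$) that each summand is supported in a slight enlargement of $A_k$, only finitely many terms overlap at any point, and $\|f_\eta - f\|_{L^1(\Omega)} < \eta$.

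The core estimate is the variation bound. Testing against any $g \in \Gamma_0^1(T\Omega)$ with $|g| \leq 1$ and formally transferring the mollifier onto $g$ yields
\[
\intO f_\eta \,\Div g \;=\; \sum_k \intO (\phi_k f)\, \Div(\rho_{\epsilon_k}^{*} g) \;+\; \text{(commutator errors)},
\]
after which I would expand $\phi_k f\,\Div h = f\,\Div(\phi_k h) - f\,\la D\phi_k, h\ra$ and exploit $\sum_k D\phi_k \equiv 0$ on $\Omega$ to telescope the $D\phi_k$-terms into an error controlled by $\|\rho_{\epsilon_k}(f\,D\phi_k) - f\,D\phi_k\|_{L^1}$, which vanishes as $\epsilon_k \to 0$. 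The surviving leading term is $\intO f\,\Div G_\eta$ for a composite test field $G_\eta$ built from $\sum_k \phi_k \rho_{\epsilon_k}^{*} g$, with $|G_\eta| \leq 1 + o(1)$; rescaling $G_\eta$ by $1/(1 + o(1))$ then gives $\var{f_\eta} \leq \var{f} + o(1)$.

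The main obstacle is that on a Riemannian manifold Euclidean convolution does not commute with the divergence: in local coordinates $\Div X = (\det g_{ij})^{-1/2}\, \partial_i\bigl((\det g_{ij})^{1/2} X^i\bigr)$, so pulling $\rho_\epsilon$ across $\Div$ picks up a commutator with the smooth weight $(\det g_{ij})^{1/2}$ (and Christoffel symbols when $g$ is written intrinsically), whereas in the flat case the analogous computation is exact. The remedy is to treat these weights as smooth multipliers and estimate $[\rho_\epsilon, (\det g_{ij})^{1/2}]$ and its cousins directly; since the weights are smooth and bounded on each chart, these commutators tend to zero in $L^1$ operator norm as $\epsilon \to 0$, so their contribution to $\var{f_\eta}$ is bounded by a local-geometry constant times $\epsilon_k \cdot \var{f}$, hence negligible once $\epsilon_k$ is taken small enough. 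Putting these pieces together gives $\limsup_j \var{f_j} \leq \var{f}$, which together with lower semi-continuity yields the claim.
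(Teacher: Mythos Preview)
Your proposal is correct and is essentially the same approach as the paper's, which does not give an explicit proof but simply refers the reader to Giusti's argument \cite[Thm.~1.17]{Giu}; your sketch is precisely the adaptation of that argument to the Riemannian setting, with the only extra bookkeeping being the commutator terms coming from the volume weight $\sqrt{\det \sigma_{ij}}$ in local coordinates, which you handle correctly.
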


From this we may show a generalization of the compactness theorem.

\begin{theorem}\label{fbvt2}
Let $\Omega \subset M$ be a bounded open subset that is sufficiently regular for the Rellich Theorem to hold. Then, sets of functions uniformly bounded in $BV(\Omega)$ are relatively compact in $L^1(\Omega)$.
\end{theorem}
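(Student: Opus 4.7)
The plan is to reduce the $BV$ compactness statement to the classical Rellich compactness for $W^{1,1}(\Omega)$ by using Theorem \ref{fbvt1} to replace each $BV$ function with a smooth approximant. The overall strategy is the standard one from Giusti's treatment, but phrased so that the only non-trivial analytic input on $M$ is the Rellich theorem, which we are assuming holds.

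More concretely, let $\{f_j\} \subset BV(\Omega)$ be a sequence with $\|f_j\|_{BV(\Omega)} \leq C$ uniformly. By Theorem \ref{fbvt1}, for each $j$ I can choose $g_j \in C^\infty(\Omega)$ with
\begin{equation}
 \lonorm{f_j - g_j}{\Omega} < \tfrac{1}{j}, \qquad \bigl| \var{g_j} - \var{f_j} \bigr| < \tfrac{1}{j}.
\end{equation}
Since $\var{g_j} = \intO |\D g_j|$ for smooth $g_j$, the sequence $\{g_j\}$ is uniformly bounded in $W^{1,1}(\Omega)$ by $C + 2$. Applying the Rellich compactness assumption on $\Omega$, I extract a subsequence $g_{j_k}$ converging in $L^1(\Omega)$ to some $g$. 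Then
\begin{equation}
 \lonorm{f_{j_k} - g}{\Omega} \leq \lonorm{f_{j_k} - g_{j_k}}{\Omega} + \lonorm{g_{j_k} - g}{\Omega} \to 0,
\end{equation}
so $f_{j_k} \to g$ in $L^1(\Omega)$, giving the desired relative compactness. One can note in passing that $g \in BV(\Omega)$ by Lemma \ref{fbvl1} applied to $\sqrt{1+|\D u|^2}$ (or directly to $\var{\cdot}$), so the limit lies back in $BV$.

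The only real subtlety is ensuring that the smooth approximants $g_j$ are genuinely uniformly bounded in $W^{1,1}$ and not merely in variation: the $L^1$-closeness in Theorem \ref{fbvt1} handles the $L^1$-norm control, and the convergence $\var{g_j} \to \var{f_j}$ then gives uniform gradient bounds, so this is routine. The substantive hypothesis is packaged into ``sufficiently regular for the Rellich Theorem to hold''; once that is granted, no further manifold-specific argument is needed, since the mollification step is local and was already carried out in Theorem \ref{fbvt1} using partitions of unity on $M$.
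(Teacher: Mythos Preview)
Your proposal is correct and follows essentially the same approach as the paper: approximate each $f_j$ by a smooth $g_j$ via Theorem~\ref{fbvt1}, obtain a uniform $W^{1,1}$ bound, apply Rellich to extract a convergent subsequence of the $g_j$, and conclude by the triangle inequality and lower semicontinuity. Your write-up is in fact slightly more careful than the paper's (which contains a minor typo in the $L^1$-closeness bound).
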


\begin{proof}

Let $f_j \in BV(\Omega)$ such that $\| f_j \|_{BV(\Omega)} \leq M$. By Theorem \ref{fbvt1}, for each $j$, we may find $g_j \in C^\infty(\Omega)$ such $\intO |f_j - g_j| < j/2$ and $\var{g_j} < M+2$. So, our sequence $g_j$ is uniformly bounded in a Sobolev Space, and we may apply the Rellich Theorem to obtain a subsequence of $f_j$ converging to a function $f \in L^1(\Omega)$.

Now, using Lemma \ref{fbvl1} we can see that $f \in BV(\Omega)$.

\end{proof}

\begin{remark}
A sufficient condition for the hypotheses of \ref{fbvt2} to hold is that $\partial \Omega$ is Lipschitz-continuous.
\end{remark}


By using partitions of unity, we may extend \cite[Theorem 2.16]{Giu} from the Euclidean case to $M^n\times \mathbb{R}$.

\begin{theorem}[Extension of the Boundary]\label{fbvt3}
Let $\Omega \subset M$ be a bounded open set with Lipschitz-continuous boundary $\partial \Omega$ and let $\varphi \in L^1(\partial \Omega)$. For every $\epsilon > 0$ there exists $f \in W^{1,1}(\Omega)$ having trace $\varphi$ on $\partial \Omega$ such that
\begin{equation}
 \intO |f| \leq \epsilon \lonorm{\varphi}{\partial \Omega}
\end{equation}
\begin{equation}
 \var f \leq A \lonorm{\varphi}{\partial \Omega}
\end{equation}
with $A$ depending only on $\partial \Omega$.
\end{theorem}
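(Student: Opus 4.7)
The plan is to reduce to Giusti's Euclidean result via a partition of unity on Lipschitz coordinate charts near $\partial \Omega$. Since $\partial \Omega$ is Lipschitz-continuous, I would first cover $\partial \Omega$ by finitely many open sets $U_\alpha \subset M$, $\alpha = 1, \ldots, N$, together with bi-Lipschitz coordinate maps $\psi_\alpha : U_\alpha \to V_\alpha \subset \mathbb{R}^n$ that straighten the boundary, so that $\psi_\alpha(U_\alpha \cap \Omega)$ is a region lying above a Lipschitz graph $\{x_n > h_\alpha(x')\}$ inside a cylinder and $\psi_\alpha(U_\alpha \cap \partial \Omega)$ is exactly that graph. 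On each $\overline{U_\alpha}$ the metric on $M$ is uniformly equivalent to the pullback of the Euclidean metric, so $L^1$-norms on $U_\alpha \cap \Omega$, variation seminorms, and $(n-1)$-dimensional surface measures on $\partial \Omega \cap U_\alpha$ are comparable up to a constant $C_\alpha$ that depends only on $\partial \Omega$ and the chosen cover.

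Next I would pick a partition of unity $\{\eta_\alpha\}$ of a neighborhood of $\partial \Omega$ subordinate to $\{U_\alpha\}$ and decompose $\varphi = \sum_\alpha \varphi_\alpha$ with $\varphi_\alpha := \eta_\alpha \varphi \in L^1(\partial \Omega \cap U_\alpha)$. Pushing each $\varphi_\alpha$ to Euclidean space via $\psi_\alpha$ and applying the Euclidean boundary-extension theorem (\cite[Theorem 2.16]{Giu}) with parameter $\epsilon' = \epsilon/(N \max_\alpha C_\alpha)$, I obtain $F_\alpha \in W^{1,1}(\psi_\alpha(U_\alpha \cap \Omega))$ whose trace on the Lipschitz graph piece of the boundary equals $(\psi_\alpha)_\# \varphi_\alpha$, with $L^1$-norm bounded by $\epsilon' \lonorm{(\psi_\alpha)_\# \varphi_\alpha}{\psi_\alpha(\partial \Omega \cap U_\alpha)}$ and variation bounded by $A' \lonorm{(\psi_\alpha)_\# \varphi_\alpha}{\psi_\alpha(\partial \Omega \cap U_\alpha)}$.

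Then I would define $f_\alpha = F_\alpha \circ \psi_\alpha$ on $U_\alpha \cap \Omega$ and extend by zero to the rest of $\Omega$. The construction in \cite{Giu} gives $F_\alpha$ concentrated in a thin strip above the Lipschitz graph, and the width of that strip is a free parameter that may be taken as small as we like; choosing it small enough ensures $F_\alpha$ vanishes near the lateral and top portions of $\partial \psi_\alpha(U_\alpha \cap \Omega)$, so $f_\alpha \in W^{1,1}(\Omega)$ with trace $\varphi_\alpha$ on $\partial \Omega$. Setting $f = \sum_\alpha f_\alpha$ produces the desired $W^{1,1}(\Omega)$ function with trace $\varphi$, and the metric equivalence together with the choice of $\epsilon'$ transports the Euclidean bounds into the claimed inequalities with a constant $A$ depending only on $\partial \Omega$.

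The only non-routine point I anticipate is verifying that the local Euclidean extensions are compatible with extension by zero across the lateral faces of each chart, which requires checking that Giusti's construction indeed produces a function supported in an arbitrarily thin strip about the Lipschitz graph rather than filling the entire chart image. Once this geometric compatibility is confirmed, everything else is bookkeeping on how the partition-of-unity cutoffs and metric-equivalence constants combine into the final estimates.
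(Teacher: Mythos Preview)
Your proposal is correct and follows exactly the route indicated in the paper: the paper does not give a detailed argument but simply states that one extends Giusti's Euclidean result \cite[Theorem 2.16]{Giu} to $M^n$ via a partition of unity, which is precisely the strategy you outline. Your write-up is in fact considerably more detailed than the paper's one-line justification, and the point you flag about Giusti's extension being supported in an arbitrarily thin strip (hence vanishing near the lateral faces of each chart) is indeed a feature of his construction and is what makes the localization work.
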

\begin{remark}\label{fbvr3}
In this paper, unless otherwise stated, we always assume $\partial\Omega$ is Lipschitz-continuous.
\end{remark}

\begin{remark}
\label{fbvr1}
The construction of $f$ (see Giusti\cite[Chapter 2]{Giu}) shows that if $\partial\Omega$ is of class $C^1,$ we may take $A=1+\epsilon.$
\end{remark}
We may also generalize a result in Giusti\cite{Giu} relating the minima of two different variational problems.

\begin{theorem}
\label{fbvt4}
Let $\Omega$ be a bounded open set with $C^1$ boundary $\partial \Omega \subset M$. Also, let $\varphi \in L^1(\partial \Omega)$. We have that
\begin{align}
 &\inf \{ \mathcal{A}(u,\Omega) : u \in BV(\Omega), u=\varphi \text{ on } \partial \Omega  \} \\
 = &\inf \{ \mathcal{A}(u,\Omega) + \int \limits_{\partial \Omega} |u-\varphi| dH_{n-1} : u \in BV(\Omega) \}.
\end{align}
\end{theorem}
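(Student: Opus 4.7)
My plan is to prove the equality by establishing both inequalities between the two infima. Denote by $I_1$ the left-hand infimum (over $u \in BV(\Omega)$ with trace $\varphi$) and by $I_2$ the right-hand infimum (over all $u \in BV(\Omega)$ with boundary penalty). The inequality $I_2 \leq I_1$ is immediate: any $u$ admissible for $I_1$ has vanishing boundary integral $\int_{\partial\Omega}|u-\varphi|dH_{n-1}$ and is automatically admissible for $I_2$ with the same value of the functional.

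The substantive direction is $I_1 \leq I_2$. Given an arbitrary $u \in BV(\Omega)$, the strategy is to modify $u$ in a thin boundary layer to produce a competitor $\tilde u$ with trace $\varphi$, paying an extra area cost controlled by $\|\varphi - u\|_{L^1(\partial\Omega)}$. Fix $\epsilon > 0$. Since $\partial\Omega$ is $C^1$, the trace of $u$ lies in $L^1(\partial\Omega)$, so applying Theorem \ref{fbvt3}, together with the improved constant from Remark \ref{fbvr1}, to $\varphi - \mathrm{tr}(u) \in L^1(\partial\Omega)$ yields $f \in W^{1,1}(\Omega)$ with trace $\varphi - \mathrm{tr}(u)$ on $\partial\Omega$ and $\var f \leq (1+\epsilon) \|\varphi - u\|_{L^1(\partial\Omega)}$. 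Setting $\tilde u := u + f$ produces a $BV$ function with trace $\varphi$, hence admissible for $I_1$.

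To compare areas I would invoke the subadditivity $\mathcal{A}(u+f, \Omega) \leq \mathcal{A}(u,\Omega) + \var f$, which follows directly from Definition \ref{fbvd2}: for any admissible test pair $(g, g_{n+1})$,
\begin{equation}
\intO g_{n+1} + (u+f) \Div g = \intO g_{n+1} + u \Div g + \intO f \Div g \leq \mathcal{A}(u,\Omega) + \var f,
\end{equation}
the last term using $|g| \leq 1$. Taking the supremum over $(g, g_{n+1})$ gives $\mathcal{A}(\tilde u, \Omega) \leq \mathcal{A}(u,\Omega) + (1+\epsilon)\|\varphi-u\|_{L^1(\partial\Omega)}$, so $I_1 \leq \mathcal{A}(u,\Omega) + (1+\epsilon)\|\varphi-u\|_{L^1(\partial\Omega)}$. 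Taking the infimum over $u \in BV(\Omega)$ and then sending $\epsilon \to 0$ yields $I_1 \leq I_2$.

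The main delicate point is securing the extension $f$ with the sharp constant $1+\epsilon$ rather than some fixed $A > 1$. With only a generic Lipschitz bound, one would obtain $I_1 \leq \mathcal{A}(u,\Omega) + A\|\varphi-u\|_{L^1(\partial\Omega)}$, which is strictly weaker than the claimed equality; this is precisely why the theorem requires $\partial\Omega \in C^1$, so that Remark \ref{fbvr1} applies. Apart from this, the argument is essentially formal once the $BV$ trace theory on Lipschitz domains is in place.
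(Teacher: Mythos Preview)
Your proposal is correct and follows essentially the same approach as the paper: both establish the nontrivial inequality $I_1 \leq I_2$ by adding to an arbitrary $u\in BV(\Omega)$ an extension $f\in W^{1,1}(\Omega)$ of $\varphi-\mathrm{tr}(u)$ provided by Theorem~\ref{fbvt3} with the sharp constant $1+\epsilon$ from Remark~\ref{fbvr1}, then using the subadditivity $\mathcal{A}(u+f,\Omega)\le \mathcal{A}(u,\Omega)+\var f$ and letting $\epsilon\to 0$. Your write-up is in fact slightly more explicit than the paper's, as you justify the subadditivity directly from Definition~\ref{fbvd2} and correctly identify that the $C^1$ hypothesis is needed precisely to secure the constant $1+\epsilon$.
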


\begin{proof}
It is clear that the left side is ``$\geq$" to the right side. So, we will now show the opposite inequality.

Given $\epsilon > 0$ and any $u\in BV(\Omega)$, from Theorem \ref{fbvt3}, we have that there exists $w \in W^{1,1}(\Omega)$
such that
\begin{equation}
\var{w} \leq (1+\epsilon) \lonorm{\varphi - u}{\partial \Omega}
\end{equation}
and \[w = \varphi - u \mbox{\,\,on $\partial \Omega$}.\]
Now, define $v \equiv u + w$. Note that $v \in BV(\Omega)$ and that $v = \varphi$ on $\partial \Omega$.

We see that
\begin{align}
\surfarea{v} & \leq \surfarea{u} + \var{w} \\
& \leq \surfarea{u} + (1+\epsilon)\lonorm{u-\varphi}{\partial\Omega}.
\end{align}
Taking $\epsilon \to 0$, we get our desired inequality and the theorem.
\end{proof}


We also have a lemma for constructing functions of bounded variation by gluing together functions defined on adjacent domains.

\begin{lemma}
\label{fbvl2}
Choosing $R>0$ sufficiently small such that there exists local coordinates in $\mathbf B_{3R}$ on $M\times\{0\}$, and
let $C_R = \mathcal B_R \times (-R,R)\subset\mathbf B_{3R}\subset M\times\{0\}$.
Denote the upper half cylinder by $C_R^+ = \mathcal B_R \times (0,R)$
and the lower cylinder by $C_R^- = \mathcal B_R \times (-R,0)$ so that $\mathcal B_R = \partial C_R^+ \cap \partial C_R^-$. Given functions $f_1 \in BV(C_R^+)$ and $f_2 \in BV(C_R^-)$, define a function
$f = \begin{cases}
f_1 & \text{in } C_R^+ \\
f_2 & \text{in } C_R^-
\end{cases}$. Then, $f \in BV(C_R)$ and
\begin{equation}
\int\limits_{\mathcal B_R} |Df| = \int\limits_{\mathcal B_R} |f^+ - f^-|,
\end{equation}
where $f^+$ is the trace of $f_1$ on $\mathcal B_R,$ and $f^-$ is the trace of $f_2$ on $\mathcal B_R.$
\end{lemma}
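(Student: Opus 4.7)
My plan is to decompose the total variation $\int_{C_R}|Df|$ into three pieces---the interior variations on $C_R^\pm$ plus a jump contribution supported on the interface $\mathcal{B}_R$---and then read off the stated identity as the interface piece. First, since everything sits in the coordinate chart $\mathbf B_{3R}$, I would invoke the BV trace theorem on each of the Lipschitz subdomains $C_R^\pm$ (transporting Giusti's Euclidean trace theorem through the chart) to realize $f^+$ and $f^-$ as genuine $L^1$ traces on $\mathcal B_R$. For any admissible test field $X \in \Gamma_0^1(TC_R)$ with $|X|\le 1$, I would then split
\begin{equation}
\intO f\,\Div X = \int_{C_R^+} f_1\,\Div X + \int_{C_R^-} f_2\,\Div X
\end{equation}
and apply the BV Gauss--Green formula on each half. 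The contributions on $\partial C_R^\pm \setminus \mathcal B_R$ vanish because $X$ has compact support in $C_R$; on $\mathcal B_R$ the outward unit normals $\nu^\pm$ to $C_R^\pm$ are opposite, so writing $\nu = \nu^+$ the surviving boundary term collapses to $\int_{\mathcal B_R}(f^+ - f^-)\la X,\nu\ra\, dH_{n-1}$, giving
\begin{equation}
\intO f\,\Div X = -\int_{C_R^+}\la X,Df_1\ra - \int_{C_R^-}\la X,Df_2\ra + \int_{\mathcal B_R}(f^+ - f^-)\la X,\nu\ra\, dH_{n-1}.
\end{equation}

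Next I would take the supremum over $X$ with $|X|\le 1$. The three terms on the right are supported on essentially disjoint subsets of $\overline{C_R}$, so by a cutoff-and-localization argument the supremum decouples: I would construct $X$ piecewise on $C_R^+$, on $C_R^-$, and on a thin slab about $\mathcal B_R$, then sum them (the supports can be arranged disjoint, so $|X|\le 1$ is preserved). For each interior piece the usual polar-direction construction yields $\int_{C_R^\pm}|Df_i|$; for the interface term I would pick $X$ with tangential components vanishing near $\mathcal B_R$ and with $\la X,\nu\ra$ smoothly approximating $\operatorname{sgn}(f^+-f^-)$ on a shrinking slab, so that passing to the limit delivers $\int_{\mathcal B_R}|f^+-f^-|\,dH_{n-1}$. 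The outcome is
\begin{equation}
\intO |Df| = \int_{C_R^+}|Df_1| + \int_{C_R^-}|Df_2| + \int_{\mathcal B_R}|f^+-f^-|\,dH_{n-1},
\end{equation}
which simultaneously proves $f \in BV(C_R)$ and, since the first two measures charge no mass to $\mathcal B_R$, delivers the identity $\int_{\mathcal B_R}|Df| = \int_{\mathcal B_R}|f^+-f^-|$ stated in the lemma.

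The main obstacle I anticipate is rigorously executing the BV Gauss--Green step with trace-valued boundary data. The cleanest route is to strictly approximate each $f_i$ on $\overline{C_R^\pm}$ by smooth functions converging in $L^1$, with variations converging and with traces converging in $L^1(\mathcal B_R)$ (Anzellotti--Giaquinta-style strict approximation), apply the classical divergence theorem, and pass to the limit. Transporting this to the chart produces lower-order metric error terms that can be absorbed, since $\mathbf B_{3R}$ is a single coordinate ball with uniformly bounded geometry. A secondary technical point is the simultaneous realization of the three suprema under the global pointwise constraint $|X|\le 1$; this is handled by the disjoint-support construction above, using that $\mathcal B_R$ has zero Lebesgue measure in $C_R$ so the shrinking-slab cutoff costs nothing in the interior pieces.
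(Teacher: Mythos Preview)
Your proposal is correct and follows essentially the same approach as the paper: both split $\int_{C_R} f\,\Div g$ over $C_R^\pm$ and apply the BV Gauss--Green/trace formula on each half to isolate the interface term $\int_{\mathcal B_R}(f^+-f^-)\la g,\nu\ra\,dH_{n-1}$. The only cosmetic difference is that the paper, rather than taking suprema and decoupling, directly compares this with the measure-theoretic decomposition $\int_{C_R}\la g,Df\ra = \int_{C_R^+}\la g,Df\ra + \int_{C_R^-}\la g,Df\ra + \int_{\mathcal B_R}\la g,Df\ra$ to identify the vector measure $Df\restriction_{\mathcal B_R}$ as $-(f^+-f^-)\nu\,dH_{n-1}$, from which the total-variation identity is immediate; your supremum-localization argument reaches the same conclusion with a bit more scaffolding.
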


\begin{proof}
\begin{equation}\label{fbv1}
\begin{aligned}
\int_{\mathcal C_R}f \Div g= &-\int_{\mathcal C_R^+}\langle g,Df\rangle+\int_{\mathcal B_R}f^+\langle g,\nu\rangle d_{H_n}\\
&-\int_{\mathcal C_R^-}\langle g,Df\rangle-\int_{\mathcal B_R} f^-\langle g,\nu\rangle d_{H_n}.
\end{aligned}
\end{equation}
On the other hand
\begin{equation}\label{fbv2}
\int_{\mathcal C_R}f\Div g=-\int_{\mathcal C_R^+}\langle g, Df\rangle-\int_{C_R^-}\langle g, Df\rangle
-\int_{\mathcal B_R}\langle g,Df\rangle.
\end{equation}
Therefore,
\begin{equation}
-\int_{\mathcal B_R}\langle g,Df\rangle=\int_{\mathcal B_R}(f^+-f^-)\langle g,\nu\rangle d_{H_n}.
\end{equation}
\end{proof}

\begin{remark}
\label{fbvr2}
By using a partition of unity argument as in Remark 2.9 of \cite{Giu}, we have that if $A \subset\subset \Omega \subset\subset M^n$ is an open set with Lipschitz continuous boundary $\partial A$, then $f|_A$ and $f|_{\Omega\backslash \bar A}$ will have traces on $\partial A$ which we will call $f_A^-$ and $f_A^+$ respectively. Then $\int\limits_{\partial A} |f_A^+ - f_A^-| = \int\limits_{\partial A} |Df|.$
\end{remark}


Like Sobolev functions, functions of bounded variation have natural trace operators for giving the values of a function $f\in BV(\Omega)$ on $\partial\Omega$. \cite{Giu}

The reduced boundary $\partial^* E$ is intuitively defined to be the regular part of the boundary. $\partial^* E$ is specifically defined by $x\in \partial^* E$ if and only if
\begin{enumerate}
\item$\int\limits_{B(x,\rho)} |D\phi_E| > 0$ for all $\rho > 0$,
\item The limit $\nu(x) = \lim\limits_{\rho\to 0}\frac{\int_{B(x,\rho)} D\phi_E}{\int_{B(x,\rho)}|D\phi_E|}$ exists,
\item $|v(x)|$ = 1.
\end{enumerate}
The singular set of a set $E$ is specifically defined to be $\partial E \setminus \partial^* E$.

A Cacciopoli set $F$ is defined to be a Borel set $F$ such that for every bounded domain $\Omega \subset M\times\mathbb R$ we have $\int\limits_{\Omega} |D\phi_F| < \infty$. For a Cacciopoli set $E$, up to set of negligible measure, we have that $\partial^* E$ is a countable union of compact $C^1$ hypersurfaces \cite[Theorem 4.4]{Giu}.


\section{functionals}\label{fun}

\begin{definition}\label{fund1}
We define
\begin{equation}
J(u, \Omega) = \mathcal A(u,\Omega) + \int \limits_{\partial \Omega} |u-\varphi| dH_{n-1}.
\end{equation}
\end{definition}

Instead of directly solving the Dirichlet boundary value problem, we solve the variational problem of finding $u \in BV(\Omega)$ minimizing $J(w,\Omega)$ among all $w \in BV(\Omega)$. Note, since boundary values are not preserved by limits in $L^1$, we are using the boundary integral to penalize not matching the boundary values of the Dirichlet problem.

\begin{remark}\label{funr2} Let $\Omega$ be a bounded open set with Lipschitz continuous boundary. If $\mathfrak B$
is a ball containing $\bar\Omega$ we can use Theorem \ref{fbvt3} to extend $\varphi$ to a $W^{1,1}$ function in $\mathfrak B-\bar\Omega,$
that we will denote again by $\varphi.$ If we set for $v\in BV(\Omega)$
\begin{equation}
v_{\varphi}(x)=\begin{cases}
v(x) & x\in\Omega\\
\varphi(x) & x\in\mathfrak B-\Omega\\
\end{cases}
\end{equation}
then $v_{\varphi}\in BV(\mathfrak B)$ and
\begin{equation}\label{fun19}
\int\limits_{\mathfrak B}\sqrt{1+|Dv_\varphi|^2}=J(v, \Omega)+\int\limits_{\mathfrak B-\bar\Omega}\sqrt{1+|D\varphi|^2}dx.
\end{equation}
In the future, we denote
\begin{equation}
\mathcal A_\varphi(v, \mathfrak B)=\int\limits_{\mathfrak B}\sqrt{1+|Dv_\varphi|^2}.
\end{equation}
It is clear that $u$ is a minimizer for our Dirichlet problem for $J(v,\Omega)$ if and only if $u_\varphi$ is a minimizer to the equivalent problem: Given a function $\varphi\in W^{1,1}(\mathfrak B-\bar\Omega),$
find a function $u\in BV(\mathfrak B),$ coinciding with $\varphi$ in $\mathfrak B-\bar\Omega$ and minimizing the area
$\mathcal A_\varphi(v; \mathfrak B).$
\end{remark}

\begin{theorem}[Existence of a Minimizer]\label{funt1}
Let $\Omega$ be a bounded open set in M with Lipschitz boundary $\partial \Omega$, and let $\varphi$ be a function in $L^1(\partial \Omega)$. The functional $J(u, \Omega)$ attains its minimum in $BV(\Omega)$.
\end{theorem}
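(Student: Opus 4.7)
The plan is to apply the direct method of the calculus of variations. The main technical obstruction is that the trace operator is not continuous under $L^1$ convergence, so lower semi-continuity of the boundary penalty $\int_{\partial\Omega}|u-\varphi|\,dH_{n-1}$ does not follow directly from Lemma \ref{fbvl1}. To sidestep this, I would pass to the equivalent formulation in Remark \ref{funr2}: using Theorem \ref{fbvt3} to extend $\varphi$ to a $W^{1,1}$ function on a ball $\mathfrak{B}\supset\bar\Omega$, every $v\in BV(\Omega)$ corresponds to $v_\varphi\in BV(\mathfrak{B})$, and by \eqref{fun19} minimizing $J(\cdot,\Omega)$ over $BV(\Omega)$ is equivalent, up to the additive constant $C_\varphi=\int_{\mathfrak{B}\setminus\bar\Omega}\sqrt{1+|D\varphi|^2}$, to minimizing $\mathcal{A}_\varphi(\cdot,\mathfrak{B})$ over all $w\in BV(\mathfrak{B})$ that coincide with $\varphi$ on $\mathfrak{B}\setminus\bar\Omega$. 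On the enlarged domain the boundary penalty has been absorbed into interior area, so Lemma \ref{fbvl1} applies without issue.

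Concretely, I would take a minimizing sequence $\{u_j\}\subset BV(\Omega)$ with $J(u_j,\Omega)\leq M$ and set $v_j=(u_j)_\varphi$. From \eqref{fun19} we get $\mathcal{A}_\varphi(v_j,\mathfrak{B})\leq M+C_\varphi$, and in particular $\int_{\mathfrak{B}}|Dv_j|\leq M+C_\varphi$. To obtain the $L^1$ bound required for Theorem \ref{fbvt2}, I would apply the $BV$ Poincar\'e inequality on the ball $\mathfrak{B}$ to control $\|v_j-\bar v_j\|_{L^1(\mathfrak{B})}$ by the total variation, and then use $v_j\equiv\varphi$ on the fixed set $\mathfrak{B}\setminus\bar\Omega$ (of positive measure) to bound the mean $|\bar v_j|$ in terms of $\|\varphi\|_{L^1(\mathfrak{B}\setminus\bar\Omega)}$. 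With uniform $BV(\mathfrak{B})$ bounds in hand, Theorem \ref{fbvt2} yields a subsequence $v_{j_k}\to v$ in $L^1(\mathfrak{B})$ with $v\in BV(\mathfrak{B})$. Since $v_{j_k}=\varphi$ on $\mathfrak{B}\setminus\bar\Omega$ for every $k$, so does $v$, whence $v=u_\varphi$ for $u:=v|_\Omega\in BV(\Omega)$. Applying Lemma \ref{fbvl1} on $\mathfrak{B}$ and translating back via \eqref{fun19} gives
\[
J(u,\Omega)=\mathcal{A}_\varphi(v,\mathfrak{B})-C_\varphi\leq\liminf_k\mathcal{A}_\varphi(v_{j_k},\mathfrak{B})-C_\varphi=\liminf_k J(u_{j_k},\Omega)=\inf_{BV(\Omega)}J,
\]
so $u$ is the desired minimizer.

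The hard part is the $L^1$ control on the minimizing sequence, since an energy bound on $J$ alone controls only the total variation. The boundary trace term is what anchors the sequence, and the cleanest way to exploit it is precisely the extension construction of Remark \ref{funr2}, because outside $\bar\Omega$ the extensions $v_j$ all coincide with a single fixed $L^1$ function $\varphi$; this pins down the Poincar\'e average on $\mathfrak{B}$ and simultaneously converts the trace-discontinuity difficulty into a standard interior lower semi-continuity statement to which Lemma \ref{fbvl1} applies.
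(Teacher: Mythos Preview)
Your argument is correct and is precisely the approach the paper takes: its proof is the single sentence ``Applying Theorem \ref{fbvt2} and Lemma \ref{fbvl1} to the equivalent problem for $A_\varphi(u,\mathfrak B)$, we get the conclusion.'' You have simply spelled out the details the paper leaves implicit, in particular the Poincar\'e argument anchoring the $L^1$ norm via the fixed extension $\varphi$ on $\mathfrak{B}\setminus\bar\Omega$.
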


\begin{proof}
Applying Theorem \ref{fbvt2} and Lemma \ref{fbvl1} to the equivalent problem for $A_\varphi(u,\mathfrak B)$, we get the conclusion.
\end{proof}


Just like in Giusti\cite{Giu}, we have a theorem relating the surface area of the graph of a function of bounded variation to the variation of the characteristic function
of its subgraph.

\begin{theorem}\label{funt2}
Let $u \in BV(\Omega)$ and let $U = \{(x,t) \in \Omega \times \mathbb R : t < u(x)  \}$ be the subgraph of $u$. We have that
\begin{equation}\label{fun1}
\surfarea{u} = \int \limits_{\Omega \times \mathbb R} |D\phi_U|,
\end{equation}
where $\phi_U$ is the characteristic function of set $U.$
\end{theorem}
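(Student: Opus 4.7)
The plan is to verify \eqref{fun1} first for smooth $u$ and then extend to general BV functions by approximation and lower semicontinuity.

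For $u \in C^1(\Omega)$ the subgraph $U$ has $C^1$ boundary inside $\Omega \times \mathbb{R}$, given precisely by the graph of $u$. The classical divergence theorem then identifies $\int_{\Omega \times \mathbb{R}} |D\phi_U|$ with $\mathcal{H}^n(\mathrm{graph}\, u) = \int_\Omega \sqrt{1+|Du|^2}$, while the observation recorded just after Definition \ref{fbvd2} gives $\surfarea{u} = \int_\Omega \sqrt{1+|Du|^2}$ for any $u \in W^{1,1}(\Omega)$. Thus \eqref{fun1} holds whenever $u$ is smooth.

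For a general $u \in BV(\Omega)$, I would pick an approximating sequence $u_j \in C^\infty(\Omega)$ produced by Theorem \ref{fbvt1}, so that $u_j \to u$ in $L^1(\Omega)$. Fubini implies that the subgraphs satisfy $\phi_{U_j} \to \phi_U$ in $L^1_{\mathrm{loc}}(\Omega \times \mathbb{R})$, so lower semicontinuity of the perimeter and of the area functional (Lemma \ref{fbvl1}) yield
\begin{equation*}
\int_{\Omega\times\mathbb{R}}|D\phi_U| \;\le\; \liminf_j \int_{\Omega\times\mathbb{R}}|D\phi_{U_j}| \;=\; \liminf_j \surfarea{u_j}, \qquad \surfarea{u} \;\le\; \liminf_j \surfarea{u_j},
\end{equation*}
where the middle equality is the smooth case applied to each $u_j$.

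The main obstacle is that these two semicontinuity inequalities by themselves do not pin down equality: Theorem \ref{fbvt1} only asserts $\var{u_j}\to\var{u}$, which is strictly weaker than $\surfarea{u_j}\to\surfarea{u}$. I would close the argument by refining the approximation. The mollification together with partition-of-unity construction that proves Theorem \ref{fbvt1} can be carried out with the integrand $\sqrt{1+|p|^2}$ in place of $|p|$, since both are convex with linear growth and enjoy the same Reshetnyak-type continuity along strictly convergent BV sequences; this produces smooth approximants along which the full surface area converges, $\surfarea{u_j} \to \surfarea{u}$. Once this stronger convergence is in hand, both $\liminf$'s above become honest limits and both left-hand sides are pinched to the common value $\lim_j \surfarea{u_j} = \lim_j \int_{\Omega\times\mathbb{R}}|D\phi_{U_j}|$, yielding \eqref{fun1}. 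An alternative route that sidesteps this refinement is to prove the $\le$ direction directly: test $\int \phi_U \,\mathrm{Div}(g,g_{n+1})$ against a compactly supported vector field on $\Omega\times\mathbb{R}$, split the divergence into tangential and vertical parts, integrate the vertical part in $t$ to get $\int_\Omega g_{n+1}(x,u(x))\,dx$, and bound the remaining piece using the measure-theoretic definition of $Du$; the reverse $\ge$ direction then comes from exhibiting near-optimal test pairs on $\Omega\times\mathbb R$ built by multiplying optimal tangential data for $\surfarea{u}$ by a vertical bump that tracks the level $t = u(x)$.
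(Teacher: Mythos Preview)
Your main route has a logical gap at the pinching step. Even after you upgrade the approximation to $\surfarea{u_j}\to\surfarea{u}$, the two displayed inequalities become
\[
\int_{\Omega\times\mathbb{R}}|D\phi_U|\;\le\;\lim_j\surfarea{u_j}=\surfarea{u}
\qquad\text{and}\qquad
\surfarea{u}\;\le\;\lim_j\surfarea{u_j}=\surfarea{u},
\]
and the second is now a tautology. You have obtained $\int|D\phi_U|\le\surfarea{u}$, but nothing forces the reverse inequality: lower semicontinuity of the perimeter goes in only one direction, and there is no a~priori reason why $\int|D\phi_{U_j}|\to\int|D\phi_U|$ without already knowing the theorem. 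Two quantities each bounded above by the same limit need not be equal.

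The paper handles the two inequalities separately. For $\int|D\phi_U|\ge\surfarea{u}$ it argues directly (first for bounded $u$): given any test pair $(g,g_{n+1})$ on $\Omega$ with $|(g,g_{n+1})|\le 1$, it builds the vector field $H(x,t)=\eta(t)\,(g(x),g_{n+1}(x))$ on $\Omega\times\mathbb{R}$, with $\eta$ a vertical cutoff equal to $1$ on the range of $u$, and computes $\int_U\Div H=\int_\Omega g_{n+1}+u\,\Div g$. Taking the supremum gives the $\ge$ direction. This is exactly the construction you sketch in the last sentence of your ``alternative route'', so you had the right idea---but it is not an alternative, it is the missing half of the proof. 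The $\le$ direction then does follow from the refined approximation and lower semicontinuity of perimeter, as you describe. The paper also reduces from general $u\in BV(\Omega)$ to bounded $u$ by truncation $u_T=(u\wedge T)\vee(-T)$ and sending $T\to\infty$, a step your outline omits.
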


\begin{proof}

\newcommand{\intOR}{\int \limits_{\Omega \times \mathbb R}}

First, consider the case that $u$ is bounded. By translating we may consider $u \geq 1$. Let $g(x)\in\Gamma^1_0(T\Omega)$ and $g_{n+1}(x) \in C_0^1(\Omega)$  such that $|(g(x),g_{n+1}(x))| \leq 1$. Let $\eta(t)$ be a function such that $\suppt \eta \subset [0, 1+\sup_\Omega u] $, $\eta \equiv 1$ on $[1, \sup_\Omega u]$, and $|\eta| \leq 1$. Let $H(x,x_{n+1}) = \eta(x_{n+1})(g(x),g_{n+1}(x))$. We have $|H| \leq 1$ in $\Omega \times \mathbb R$.
Then, note that
\begin{align}\label{fun2}
\intOR |D\phi_U| & \geq \int_U  \Div H \\
 & = \intO \int \limits_0^{u(x)} g_{n+1} \eta'(\xh) + \eta(\xh)\Div g \, dx_{n+1}.
 \end{align}
Since
\begin{equation}\label{fun3}
\int \limits_0^ {u(x)} \eta'(\xh) d\xh = 1,
\end{equation}
and
\begin{align}\label{fun4}
\int \limits_0^{u(x)} \eta(\xh) d\xh & = u(x) - \int \limits_0^1 (1- \eta(\xh))d\xh\\
& = u(x) - C.
\end{align}
 Therefore,
 \begin{equation}\label{fun5}
 \intOR |D\phi_U| \geq \intO g_{n+1} + u \Div g.
 \end{equation}
 Hence, we get that
 \begin{equation}\label{fun6}
\intOR |D\phi_U| \geq \intO \sqrt{1 + |Du|^2}.
\end{equation}

To prove the opposite direction, we first note that we have equality for $C^1$ functions. Now, approximate $u \in BV(\Omega)$ by $C^1$ functions such that $u_j \to u$ in $L^1(\Omega)$ and $\surfarea{u_j}\to \surfarea u$. On the other hand
we have that
\begin{equation}
 \phi_{U_j} \to \phi_U \,\mbox{in}\, L^1_{loc} (\Omega \times \mathbb R),
\end{equation} and therefore
\begin{align}\label{fun7}
\intOR |D\phi_U|  \leq \liminf_{j\to\infty} \intOR |D\phi_{U_j}| & = \lim_{j\to\infty} \surfarea{u_j} \\
& = \surfarea{u}.
\end{align}

For $u$ unbounded, we set

\begin{equation}
 u_T(x) = \begin{cases}
		u(x) & \mbox{if}\, |u|< T \\
		T & \mbox{if}\, u \geq T \\
		-T & \mbox{if}\, u \leq -T
\end{cases}\end{equation}

Letting $T \to \infty$, we get the result.
\end{proof}

Next, following Giusti\cite{Giu}, we show that a sort of vertical rearrangement of certain type of set $F$ produces the graph of a function with no more perimeter than $F$.

\begin{lemma}\label{funl1}
Let $F \subset \Omega \times \mathbb R$ be measurable, and suppose that for some $T>0$ we have
\begin{equation} \Omega \times (-\infty, -T) \subset F \subset \Omega\times (-\infty, T).\end{equation}
For $x\in \Omega$ let
\begin{equation}w(x) \equiv \lim \limits_{k\to\infty} (\int \limits_{-k}^k \phi_F(x,t)dt - k),\end{equation}
then
\begin{equation}\label{fun8}
\surfarea{w} \leq \int \limits_{\Omega\times\mathbb R} |D\phi_F|.
\end{equation}
\end{lemma}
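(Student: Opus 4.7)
The plan is to prove the surface area bound directly from the definition of $\surfarea{w}$ by lifting any admissible test pair $(g, g_{n+1})$ on $\Omega$ to an admissible test field on $\Omega \times \mathbb R$ to be paired against $|D\phi_F|$. First I would observe that the hypothesis on $F$ makes the limit defining $w$ stabilize: for every $k > T$, $\phi_F(x, \cdot) \equiv 1$ on $(-\infty, -T)$ and $\phi_F(x, \cdot) \equiv 0$ on $(T, \infty)$, so
\[\int_{-k}^k \phi_F(x,t)\, dt - k = \int_{-T}^T \phi_F(x, t)\, dt - T.\]
Consequently $w(x) = \int_{-T}^T \phi_F(x,t)\, dt - T$ is well-defined pointwise with $|w(x)| \leq T$ almost everywhere, hence $w \in L^\infty(\Omega) \subset L^1(\Omega)$.

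Next, fix an admissible test pair $(g, g_{n+1}) \in \Gamma_0^1(T\Omega) \times C_0^1(\Omega)$ with $|(g, g_{n+1})| \leq 1$ pointwise, and choose a smooth cutoff $\eta \in C_0^\infty(\mathbb R)$ with $0 \leq \eta \leq 1$, $\eta \equiv 1$ on $[-T, T]$, and $\supp \eta \subset [-T - \delta, T + \delta]$. Define the lifted field
\[H(x, t) = \bigl(g(x)\,\eta(t),\; g_{n+1}(x)\,\eta(t)\bigr) \in \Gamma_0^1(T(\Omega \times \mathbb R)),\]
and note $|H(x,t)| = |\eta(t)|\cdot|(g(x), g_{n+1}(x))| \leq 1$. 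The key identity is
\[\int_\Omega \bigl(g_{n+1}(x) + w(x)\, \Div_M g(x)\bigr)\, dx = \int_{\Omega \times \mathbb R} \phi_F(x,t)\, \Div_{M \times \mathbb R} H(x,t)\, dx\, dt.\]
Because $\eta$ and $g_{n+1}$ separate in $(x, t)$, the right-hand side splits as
\[\int \phi_F \,\eta(t)\, \Div_M g(x) \, dx\, dt \;+\; \int \phi_F\, g_{n+1}(x)\, \eta'(t)\, dx\, dt.\]
Using Fubini and the boundary behavior of $F$, the inner $t$-integrals evaluate cleanly: $\int_\mathbb R \phi_F(x,t)\eta(t)\, dt = w(x) + T + \int_{-\infty}^{-T}\eta(t)\, dt$, whose $x$-independent constant is killed after pairing with $\Div_M g$ (because $g$ has compact support in $\Omega$, $\int_\Omega \Div_M g\, dx = 0$), while
\[\int_\mathbb R \phi_F(x,t)\,\eta'(t)\, dt = \int_{-\infty}^{-T}\eta'(t)\, dt = \eta(-T) = 1,\]
recovering exactly $\int_\Omega g_{n+1}(x)\, dx$.

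Since $|H| \leq 1$, the definition of $\int |D\phi_F|$ gives $\int \phi_F \Div H \leq \int_{\Omega \times \mathbb R} |D\phi_F|$. Combining with the identity above yields
\[\int_\Omega \bigl(g_{n+1} + w\, \Div_M g\bigr)\, dx \leq \int_{\Omega \times \mathbb R} |D\phi_F|,\]
and taking the supremum over admissible $(g, g_{n+1})$ delivers \eqref{fun8}. The only real bookkeeping issue, and the step most likely to cause confusion, is the careful accounting on $[-T - \delta, -T]$ and $[T, T + \delta]$, where one must use $\phi_F \equiv 1$ and $\phi_F \equiv 0$ respectively to see that the $\eta'$ contribution collapses to the single value $1$ and that the $\eta$ contribution produces only an $x$-independent constant; once these are recognized as direct consequences of the hypothesis on $F$, the rest is straightforward.
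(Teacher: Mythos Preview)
Your proposal is correct and follows essentially the same approach as the paper: lift the admissible pair $(g,g_{n+1})$ to $H(x,t)=\eta(t)(g(x),g_{n+1}(x))$ using a cutoff $\eta$ that is $1$ on $[-T,T]$, compute the two Fubini integrals (one yielding $w(x)$ plus an $x$-independent constant killed by $\int_\Omega \Div_M g=0$, the other yielding $1$), and take the supremum. The only cosmetic differences are your use of a general transition width $\delta$ rather than $1$ and your slightly more explicit bookkeeping on the boundary strips.
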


\begin{figure}
\def\svgwidth{3 in}
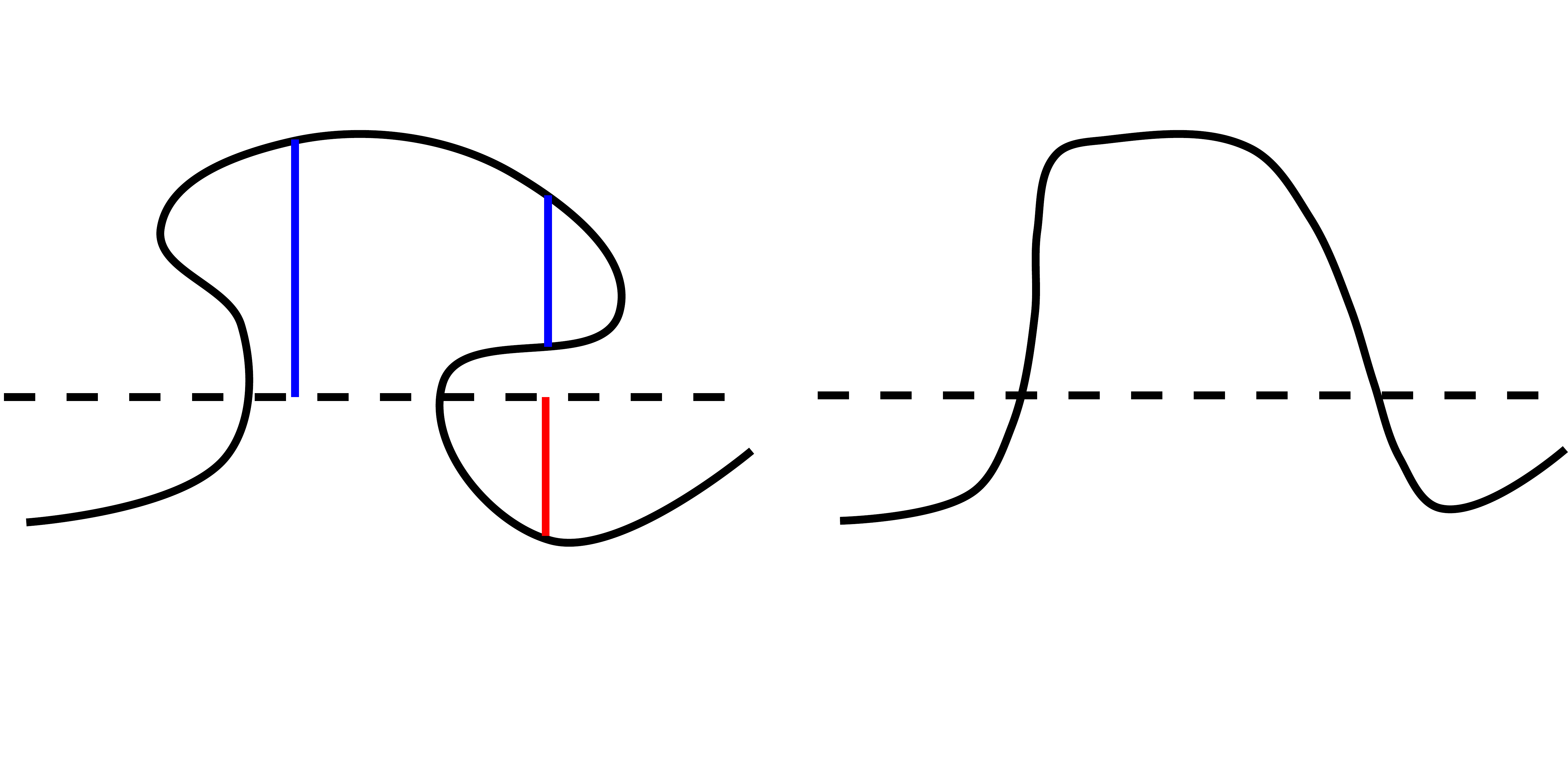
\end{figure}

Here the function $w(x)$ is essentially adding up the amount of $F$ above $(x,0)$ and subtracting the amount of $(M\times\mathbb R)\setminus F$ below $(x,0)$. Thus, we see that the subgraph of $w(x)$ is a sort of vertical rearrangement for $F$ which reduces the area. The mass over or below $(x,0)$ is redistributed to continuously start from the lowest boundary point of $F$ over or below $(x,0)$. We say ``sort of," because $H^{n+1}(F) = \infty$. Note that if $F$ is the subgraph of function $u(x)$ then $w(x) = u(x)$.

\begin{proof}
We first note that it is clear from our conditions on $F$ that $-T \leq w \leq T$.

Now, let $g(x) \in \Gamma_0^1(T\Omega)$ and $g_{n+1}(x) \in C_0^1(\Omega)$ such that $|(g(x),g_{n+1}(x))| \leq 1$. Also, let $\eta(t)$ be a smooth function such that $0 \leq \eta \leq 1$, $\eta(t) = 0 $ if $|t|\geq T+1$, and $\eta(t) = 1$ if $|t| \leq T.$

From our assumptions on $F$, we have that
\begin{equation}\label{fun9}
\int\limits_{-\infty}^\infty \eta'(t)\phi_F(x,t)dt = 1,
\end{equation}
and that
\begin{equation}
w(x) = \int\limits_{-T}^{T} \phi_F(x,t) \dt - T.
\end{equation}
So,
\begin{equation}\label{fun10}
\int\limits_{-\infty}^\infty \eta(t)\phi_F(x,t) dt = w(x) + T + \int\limits_{-T-1}^{-T} \eta(t) dt = w(x) + \alpha_T.
\end{equation}

Hence,
\begin{align}\label{fun11}
\int\limits_{\Omega\times\mathbb R} |D\phi_F| & \geq \int\limits_{\Omega\times\mathbb R} \phi_F(x,x_{n+1})\Div_{M\times\mathbb R}(\eta(x_{n+1})(g,g_{n+1}))\\
&  = \intO (w+\alpha_T)\Div g + g_{n+1}  \\
& = \intO w \Div g + g_{n+1}.
\end{align}
Taking the supremum over $|(g(x),g_{n+1}(x))| \leq 1,$ we get the result.
\end{proof}

\begin{theorem}\label{funt3}
Let $F$ be a measurable set in $Q \equiv \Omega \times \mathbb R$ such that
\begin{enumerate}
\item For a.e. $x\in \Omega$ we have
\begin{align}
\lim \limits_{t\to \infty} \phi_F(x,t) & = 0 \\
\lim \limits_{t\to -\infty}\phi_F(x,t) & = 1.
\end{align}

\item The symmetric difference $F_0 = (F-Q^-)\cup (Q^- - F)$ has finite measure, where $Q^- \equiv \{ (x,t) \in Q : t<0 \}.$
\end{enumerate}
Then the function $w(x) = \lim \limits_{k\to\infty} (\int \limits_{-k} ^{k} \phi_F(x,t)dt - k)$ belongs to $L^1(\Omega),$
 and
\begin{equation}\label{fun12}
\surfarea{w} \leq \int \limits_Q |D\phi_F|.
\end{equation}
\end{theorem}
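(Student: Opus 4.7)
The plan is to reduce to Lemma \ref{funl1} by vertical truncation of $F$, using condition (2) both to ensure $w \in L^1(\Omega)$ and to select a sequence of truncation heights along which the perimeter bounds pass to the limit. If $\int_Q|D\phi_F|=\infty$ the inequality is trivial, so assume it is finite. The identity
$$\int_{-k}^k \phi_F(x,t)\,dt - k = \int_0^k \phi_F(x,t)\,dt - \int_{-k}^0 (1-\phi_F(x,t))\,dt$$
combined with Fubini applied to $\phi_{F_0}$ yields
$$w(x) = \int_0^\infty \phi_F(x,t)\,dt - \int_{-\infty}^0 (1-\phi_F(x,t))\,dt,$$
with $\|w\|_{L^1(\Omega)} \le |F_0|$.

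For each $T>0$, set $F_T = (F\cap (\Omega\times(-T,T)))\cup(\Omega\times(-\infty,-T])$. Then $\Omega\times(-\infty,-T)\subset F_T\subset \Omega\times(-\infty,T)$, so Lemma \ref{funl1} applies. Since $\phi_{F_T}=1$ for $t<-T$ and $\phi_{F_T}=0$ for $t>T$, an elementary computation identifies the rearranged function of $F_T$ as
$$w_T(x) = \int_{-T}^T \phi_F(x,t)\,dt - T,$$
with $\surfarea{w_T}\le \int_{\Omega\times\mathbb{R}}|D\phi_{F_T}|$. Furthermore, $w-w_T = \int_T^\infty\phi_F\,dt - \int_{-\infty}^{-T}(1-\phi_F)\,dt$, so dominated convergence applied to $\phi_{F_0}$ gives $w_T\to w$ in $L^1(\Omega)$.

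It remains to control $\int|D\phi_{F_T}|$ and pass to the limit. For a.e.\ $T>0$ the horizontal slices $\phi_F(\cdot,\pm T)$ are well-defined BV traces, and decomposing the perimeter of $F_T$ into the part inside the slab $\Omega\times(-T,T)$ together with the two jump slices at $t=\pm T$ gives
$$\int_{\Omega\times\mathbb{R}}|D\phi_{F_T}| = \int_{\Omega\times(-T,T)}|D\phi_F| + \int_\Omega \phi_F(x,T)\,dx + \int_\Omega (1-\phi_F(x,-T))\,dx.$$
Condition (2) makes $T\mapsto\int_\Omega\phi_F(x,T)\,dx$ and $T\mapsto\int_\Omega(1-\phi_F(x,-T))\,dx$ lie in $L^1((0,\infty))$, so one may pick a sequence $T_k\to\infty$ along which both slice integrals vanish; consequently $\int|D\phi_{F_{T_k}}|\to\int_Q|D\phi_F|$. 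Lower semicontinuity (Lemma \ref{fbvl1}) combined with $w_{T_k}\to w$ in $L^1(\Omega)$ then yields $\surfarea{w}\le \liminf_k\surfarea{w_{T_k}}\le \int_Q|D\phi_F|$, as desired. The main technical point is the slice-trace identity used to compute $\int|D\phi_{F_T}|$; if one wishes to avoid slicing theory, one can instead mimic the proof of Lemma \ref{funl1} directly, testing $\phi_F$ against $\eta_T(t)(g,g_{n+1})$ with the cutoff from that lemma and using conditions (1) and (2) to show $\int\eta_T'\phi_F\,dt\to 1$ and $\int\eta_T\phi_F\,dt = w(x) + \mathrm{const} + o_{L^1_x}(1)$ as $T\to\infty$, then taking the supremum over test fields.
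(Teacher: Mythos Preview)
Your proof is correct and follows essentially the same route as the paper: truncate $F$ vertically to obtain sets $F_T$ satisfying the hypotheses of Lemma~\ref{funl1}, show $w_T\to w$ in $L^1(\Omega)$ via dominated convergence using condition (2), decompose the perimeter of $F_T$ into the slab contribution plus the two slice terms, and pass to the limit using lower semicontinuity. The only cosmetic difference is that you make the choice of a subsequence $T_k$ along which the slice integrals vanish explicit, whereas the paper simply writes ``taking $k\to\infty$'' and leaves this implicit (it works because the slice terms are nonnegative and integrable in $T$, hence have $\liminf$ zero).
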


\begin{remark}\label{funr1}
In the case that $\Omega \subset \mathbb R^n$, conditions (1) and (2) are redundant for Caccioppoli sets F, because (2) follows from (1) by an isoperimetric inequality for $\mathbb R^n$. Moreover, in general, the redundancy depends on the existence of an isoperimetric inequality.
\end{remark}

\begin{proof}
We define
\begin{equation}F_k \equiv F\cup [\Omega\times(-\infty,-k)] - [\Omega\times(k,\infty)].\end{equation}
Let
\begin{equation}\label{fun13}
w_k \equiv \int\limits_{-k}^k \phi_F(x,t)dt - k.
\end{equation}
Note that $w_k \to w$ pointwise and $w_k = |\{t: (x,t) \in F_k \setminus Q^-\}| - |\{t: (x,t) \in Q^- \setminus F_k\}|$. Denote
\begin{equation}\label{fun14}
f(x) = |\{t : (x,t) \in F_0\}|.
\end{equation}
From our second hypothesis we have that $f \in L^1(\Omega)$. We also see that $|w_k| \leq f$. Therefore, by dominated convergence, we have that $w_k \to w$ in $L^1(\Omega)$.

Note that,
\begin{equation}\label{fun15}
w_k(x) = \lim\limits_{l\to\infty} \int\limits_{-l}^l\phi_{F_k} (x,t) dt - l.
\end{equation}
Hence, from the preceding lemma, we get that
\begin{align}\label{fun16}
\surfarea{w_k} & \leq \int \limits_Q |D\phi_{F_k}| \\
& \leq \int \limits_{\Omega\times(-k,k)} |D\phi_{F} |
+\int \limits_{\Omega\times\{k\}} \phi_F dM + \int \limits_{\Omega\times\{-k\}} 1-\phi_F dM.
\end{align}
Taking $k\to\infty$ and using lower semi-continuity we get the result.
\end{proof}

\begin{theorem}
Let $u\in BV_{loc}(\Omega)$ be a local minimum of the area functional. Then the set $U = \{ (x,t) \in \Omega \times \mathbb R : t < u(x)\}$ minimizes locally the perimeter in $Q = \Omega \times \mathbb R$.

\end{theorem}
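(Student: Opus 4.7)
The plan is to convert any Caccioppoli competitor $F$ of $U$ into the subgraph of a function $w$ via the vertical rearrangement of Theorem \ref{funt3}, and then invoke the hypothesized local minimality of $u$ as a function. Given a measurable $F$ with $F\triangle U \subset\subset Q$, I would first choose a bounded open $\Omega'\subset\subset\Omega$ with Lipschitz boundary, and a $T>0$, such that the projection $K$ of $F\triangle U$ onto $\Omega$ satisfies $K\subset\Omega'$ and $F\triangle U\subset K\times(-T,T)$. By construction $F$ agrees with $U$ outside $K\times(-T,T)$; in particular the slices $F_x$ and $U_x$ coincide for every $x\in\Omega'\setminus K$.

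Next I would verify the hypotheses of Theorem \ref{funt3} for $F$ regarded as a subset of $\Omega'\times\mathbb R$. Condition (1) is immediate because outside the bounded region $K\times(-T,T)$ the set $F$ coincides with the subgraph $U$. Condition (2) follows from
\[
\bigl|F\triangle(\Omega'\times(-\infty,0))\bigr|\leq |F\triangle U| + \int_{\Omega'}|u|\,dx < \infty,
\]
using $u\in BV(\Omega')\subset L^1(\Omega')$ together with the boundedness of $F\triangle U$. Theorem \ref{funt3} then delivers $w\in L^1(\Omega')$ with $\surfarea{w}\leq \int_{\Omega'\times\mathbb R}|D\phi_F|$. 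A slice-by-slice inspection of $w(x)=\lim_{k\to\infty}\bigl(\int_{-k}^{k} \phi_F(x,t)\,dt - k\bigr)$ should show that $w=u$ on $\Omega'\setminus K$, since $\phi_F=\phi_U$ identically on those slices. Extending $w$ by $u$ on $\Omega\setminus\Omega'$ produces $w\in BV_{\loc}(\Omega)$ with $\{w\neq u\}\subset K\subset\subset\Omega'$, so $w$ is an admissible competitor for $u$.

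The hypothesized local minimality of $u$ on the domain $\Omega'$ then gives $\mathcal{A}(u,\Omega')\leq \mathcal{A}(w,\Omega')$, while Theorem \ref{funt2} identifies the left side with $\int_{\Omega'\times\mathbb R}|D\phi_U|$. Chaining the inequalities yields
\[
\int_{\Omega'\times\mathbb R}|D\phi_U| \leq \int_{\Omega'\times\mathbb R}|D\phi_F|,
\]
and since $F$ and $U$ coincide on $Q\setminus(\Omega'\times\mathbb R)$, the perimeter contributions outside cancel, giving the desired local perimeter comparison in $Q$.

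The main obstacle I anticipate is the verification that $w=u$ on $\Omega'\setminus K$. This requires splitting into the cases $u(x)<-T$, $|u(x)|\leq T$, and $u(x)>T$, and checking in each that the truncation contributions to $\int_{-k}^{k}\phi_F(x,t)\,dt - k$ cancel, leaving precisely $u(x)$ in the limit. The calculation is elementary but requires careful bookkeeping so that the vertical rearrangement really does recover $u$ off $K$; everything else in the plan (existence of the competitor $\Omega'$, verification of \ref{funt3}, the final chain of inequalities) is routine once this slice-wise identity is secured.
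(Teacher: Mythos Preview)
Your proposal is correct and follows essentially the same route as the paper: apply the vertical rearrangement of Theorem~\ref{funt3} to the competitor $F$, observe that the resulting $w$ agrees with $u$ outside the projection of $F\triangle U$, and then invoke local minimality of $u$ for the area functional together with Theorem~\ref{funt2}. The paper's proof is slightly terser (it simply asserts $w=u$ outside $A$ without your case analysis), but the structure and ingredients are the same.
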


\begin{proof}
Let $A \subset\subset \Omega$ and let $F$ be any set in $Q$ coinciding with $U$ outside a compact set $K \subset A\times\mathbb R$. Since $u$ is in $L^1(\Omega)$ we have that $U$ satisfies conditions (1) and (2) of Theorem \ref{funt3}.

Now, since $F$ differs from $U$ on a compact set, we have that $F$ satisfies conditions (1) and (2) of Theorem \ref{funt3}. As in Theorem \ref{funt3}, we define
\begin{equation}\label{fun17}
w(x) = \lim\limits_{k\to\infty}\int\limits_{-k}^k \phi_F(x,t)dt-k.
\end{equation}
Since $w$ coincides with $u$ outside $A$ and $\Omega\setminus A$ is open, we have that $\int\limits_{\Omega\setminus A} \sqrt{1 + |Du|^2} \dx = \int\limits_{\Omega\setminus A} \sqrt{1+|Dw|^2}$. That is, we don't pick up any extra area since $\partial A \setminus A = \emptyset$. Since $u$ locally minimizes the area functional, we then get that
\begin{equation}\label{fun18}
\int\limits_{A\times\mathbb R}|D\phi_U| = \int\limits_A \sqrt{1+|Du|^2} \leq \int\limits_A \sqrt{1+|Dw|^2} \leq \int\limits_{A\times\mathbb R}|D\phi_F|.
\end{equation}
\end{proof}


\section{Perimeter}\label{per}

\begin{definition}\label{perd1}
For a Cacciopoli set $E$ and $\Omega$ an open domain, we define $P(E,\Omega) = \var{\phi_E}.$ Note, that $P(E,\Omega)$ is the perimeter of $E$ inside $\Omega$, but not including the perimeter coming from $\partial \Omega$.
\end{definition}

\begin{lemma}\label{perl1}

For a Cacciopoli set $E$ and $\rho < R$, we have that\\
i) $P(E\cap B_\rho, B_R) = P(E,B_\rho) + H_{n-1}(\partial B_\rho \cap E)$, \\
ii)$P(E\backslash B_\rho, B_R) = P(E, B_R \backslash B_\rho) + H_{n-1}(\partial B_\rho \cap E).$

\end{lemma}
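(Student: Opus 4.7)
The plan is to apply the gluing lemma (Lemma \ref{fbvl2}) and its partition-of-unity extension in Remark \ref{fbvr2} to the characteristic functions $\phi_{E\cap B_\rho}$ and $\phi_{E\setminus B_\rho}$, using the hypersurface $\partial B_\rho$ as the interface between the ``two sides'' of the gluing. For $\rho$ smaller than the injectivity radius of $M$ at the center, $\partial B_\rho$ is smooth, so in particular Lipschitz, and we may take $A=B_\rho \subset\subset B_R = \Omega$ in Remark \ref{fbvr2}.

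For part (i), the key observation is that the function $\phi_{E\cap B_\rho}$ agrees with $\phi_E$ on $B_\rho$ and is identically $0$ on $B_R\setminus\bar B_\rho$. Thinking of it as the gluing of $\phi_E|_{B_\rho}$ with $0|_{B_R\setminus\bar B_\rho}$, Remark \ref{fbvr2} decomposes the total variation on $B_R$ as the sum of three pieces:
\begin{equation}
P(E\cap B_\rho, B_R) = \int_{B_\rho}|D\phi_E| \;+\; \int_{\partial B_\rho}|\phi_E^{-} - 0|\,dH_{n-1} \;+\; \int_{B_R\setminus\bar B_\rho}|D\,0|.
\end{equation}
The first integral is $P(E,B_\rho)$ by definition, the last vanishes, and the middle (jump) term equals $H_{n-1}(\partial B_\rho\cap E)$, since the interior trace of $\phi_E$ on $\partial B_\rho$ is, up to an $H_{n-1}$-null set, the characteristic function of $\partial B_\rho\cap E$.

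For part (ii), I would repeat the argument symmetrically on $\phi_{E\setminus B_\rho}$, which is $0$ on $B_\rho$ and equals $\phi_E$ on $B_R\setminus\bar B_\rho$. The contribution on $B_\rho$ now vanishes, the contribution on $B_R\setminus\bar B_\rho$ equals $P(E,B_R\setminus B_\rho)$ (the distinction between $B_R\setminus B_\rho$ and $B_R\setminus\bar B_\rho$ being only the Lebesgue-null set $\partial B_\rho$), and the jump contribution on $\partial B_\rho$ is again $H_{n-1}(\partial B_\rho\cap E)$ by the same trace computation.

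The only real subtlety, and hence the main step to justify carefully, is the identification of the interior/exterior trace of $\phi_E$ on $\partial B_\rho$ with the characteristic function of $\partial B_\rho\cap E$. This is a standard fact for Cacciopoli sets: the trace equals the characteristic function of the measure-theoretic interior of $E$ restricted to $\partial B_\rho$, which in turn coincides with $\partial B_\rho\cap E$ up to $H_{n-1}$-measure zero on a smooth hypersurface. Once this identification is in hand, both (i) and (ii) follow by reading off the three-term decomposition above.
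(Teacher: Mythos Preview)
Your proposal is correct and is essentially identical to the paper's proof: both apply Remark~\ref{fbvr2} with $A=B_\rho$ (for (i)) and $A=B_R\setminus\bar B_\rho$ (for (ii)) to the characteristic function, reading off the two-term decomposition $\int_A|Df|+\int_{\partial A}|f_A^-|\,dH_{n-1}$. If anything, you are slightly more careful than the paper in flagging the identification of the trace $\phi_E^\pm$ on $\partial B_\rho$ with $\chi_{\partial B_\rho\cap E}$, which the paper uses without comment.
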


\begin{proof}
From Remark \ref{fbvr2}, we conclude that if we define $F: \Omega \to \mathbb R$ by
\begin{equation}\label{per1}
F(x) = \begin{cases} f(x) & x\in A \\
0 & x \in \Omega\backslash A
\end{cases}.\end{equation}

then
\begin{equation}\label{per2}
\var F = \int\limits_A |Df| + \int\limits_{\partial A \cap \Omega}|f_A^-|.
\end{equation}

For equation (i), we let $A = B_\rho$ and $\Omega = B_R$. Also, define
\begin{equation}\label{per3}
F = \phi_{E\cap B_\rho} = \begin{cases} \phi_E & x\in B_\rho \\ 0 & x\in \Omega \backslash B_\rho \end{cases}.
\end{equation}

We then get that
\begin{equation}\label{per4}
\begin{aligned}
\int\limits_{B_R}|D\phi_{E\cap B_\rho}|&= \int\limits_{B_\rho} |D\phi_E| + \int\limits_{\partial B_\rho \cap B_R}|\phi_E^-| dH_{n-1}\\
 &= P(E,B_\rho) + H_{n-1}(\partial B_\rho \cap E).
\end{aligned}
\end{equation}

Now for equation (ii), let $A = B_R \backslash \bar B_\rho$, $\Omega = B_R$, and
\begin{equation}\label{per5}
F = \phi_{E\backslash \bar B_\rho} = \begin{cases} \phi_E & x\in B_R \backslash \bar B_\rho \\ 0 & x\in B_\rho \end{cases}.
\end{equation}

Then, we have that
\begin{equation}\label{per6}
\int\limits_{B_R}|D\phi_{E \backslash B_\rho}| = \int\limits_{B_R \backslash B_\rho} |D\phi_E| + \int\limits_{\partial(B_R \backslash B_\rho)\cap B_R}|\phi_E^-| dH_{n-1}.
\end{equation}
Note that $\partial(B_R \backslash B_\rho)\cap B_R = \partial B_\rho$, and we get the lemma.
\end{proof}

We also recall without proof some properties of perimeter (see Miranda\cite{Mir}).

\begin{lemma}\label{perl2}
i) (Locality) $ P(E,A) = P(F,A)$ whenever $Vol((E\triangle F)\cap A) = 0$, \\
ii) (Subadditivity) $P(E\cup F,A) + P(E\cap F, A) = P(E,A) + P(F,A)$,\\
iii) (Complementation) $ P(E,A) = P(E^c, A)$.
\end{lemma}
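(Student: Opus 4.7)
My plan is to treat (i), (iii), and (ii) in increasing order of difficulty. For (i), the hypothesis $\mathrm{Vol}((E\triangle F)\cap A)=0$ means $\phi_E=\phi_F$ almost everywhere on $A$, so $\int_A \phi_E\,\Div g = \int_A \phi_F\,\Div g$ for every admissible $g\in\Gamma_0^1(TA)$ with $|g|\le 1$; taking suprema yields $P(E,A)=P(F,A)$. For (iii), write $\phi_{E^c}=1-\phi_E$; since $g$ is compactly supported in $A$, the divergence theorem gives $\int_A \Div g=0$, hence $\int_A \phi_{E^c}\,\Div g = -\int_A \phi_E\,\Div g$. Because the class of admissible test fields is invariant under $g\mapsto -g$, the two suprema coincide and $P(E^c,A)=P(E,A)$.

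For (ii) the natural starting point is the pair of pointwise identities
\[
\phi_{E\cup F}=\max(\phi_E,\phi_F),\qquad \phi_{E\cap F}=\min(\phi_E,\phi_F),
\]
with algebraic consequence $\phi_E+\phi_F=\phi_{E\cup F}+\phi_{E\cap F}$. I first verify the corresponding variation identity for smooth $u,v$: at a.e.\ point either $u>v$ or $u<v$, so $\max$ and $\min$ merely relabel $\nabla u$ and $\nabla v$, giving the pointwise a.e.\ equality $|D\max(u,v)|+|D\min(u,v)|=|Du|+|Dv|$. I then use Theorem~\ref{fbvt1} to choose $C^\infty$ approximations $u_k\to\phi_E$ and $v_k\to\phi_F$ in $L^1(A)$ with $\int_A|Du_k|\to P(E,A)$ and $\int_A|Dv_k|\to P(F,A)$, observe $\max(u_k,v_k)\to\phi_{E\cup F}$ and $\min(u_k,v_k)\to\phi_{E\cap F}$ in $L^1(A)$, and invoke the lower semi-continuity of the variation (whose proof is identical to that of Lemma~\ref{fbvl1}) applied simultaneously to both sequences to conclude
\[
P(E\cup F,A)+P(E\cap F,A)\ \le\ \liminf_k\Bigl(\int_A|Du_k|+\int_A|Dv_k|\Bigr)\ =\ P(E,A)+P(F,A).
\]

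The main obstacle is that this approximation argument delivers only ``$\le$'', whereas the lemma is stated as an equality, and in fact the reverse inequality can fail in general: taking $E=\{x_1>0\}$ and $F=\{x_1<0\}$ inside a ball $A$ about the origin gives $E\cap F=\emptyset$ and $E\cup F=A$ modulo a null set, so the left-hand side vanishes while $P(E,A)+P(F,A)>0$. I therefore read the ``$=$'' in (ii) as ``$\le$'', which is the standard submodularity of the perimeter (cf.\ Miranda~\cite{Mir}) and is all that is used in the sequel. If one insists on the equality under an additional transversality or disjointness hypothesis on the reduced boundaries, the refinement follows from the Federer--De~Giorgi representation $P(G,A)=\mathcal H^{n-1}(A\cap\partial^*G)$ by matching the measure-theoretic outward normals of $E$ and $F$ on $\partial^*E\cap\partial^*F$.
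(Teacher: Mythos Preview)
The paper does not prove this lemma; it merely recalls it from Miranda~\cite{Mir}. Your arguments for (i) and (iii) are correct and standard. Your proof of the inequality in (ii) via smooth approximation and lower semi-continuity is also correct; the only minor quibble is the assertion that ``at a.e.\ point either $u>v$ or $u<v$'', which need not hold. However, on the set $\{u=v\}$ one has $D(u-v)=0$ a.e.\ (gradients vanish a.e.\ on level sets), hence $Du=Dv$ a.e.\ there, and so $|D\max(u,v)|+|D\min(u,v)|=2|Du|=|Du|+|Dv|$ on that set as well; the pointwise identity therefore survives and your argument goes through unchanged.

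You are also right that the equality in (ii) as stated is false in general; your counterexample with the two half-spaces is valid. The correct statement is the submodularity inequality $P(E\cup F,A)+P(E\cap F,A)\le P(E,A)+P(F,A)$, which is what appears in Miranda, and indeed the only place the paper invokes part (ii) (the proof of Lemma~\ref{perl3}) uses precisely this inequality. The ``$=$'' in the statement should be read as ``$\le$''.
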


By applying Lemma \ref{perl2} we have
\begin{lemma}[Absolute Continuity]\label{perl3}
If $E$ is a set of finite perimeter in $M^{n+1}$, then we have that $P(E,B) = 0$ whenever $H_n(B) = 0$.
\end{lemma}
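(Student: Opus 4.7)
The plan is to reduce the lemma to De Giorgi's structure theorem for the reduced boundary of a Caccioppoli set, which is already cited from \cite[Theorem 4.4]{Giu} in Section \ref{fbv}. The crux is identifying the perimeter set-function $A \mapsto P(E, A)$ with the restriction of the $n$-dimensional Hausdorff measure to the reduced boundary $\partial^* E$.

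First, I would note that the definition $P(E, \Omega) = \var{\phi_E}$ arises from the total variation of the distributional gradient $D\phi_E$. Applying the Riesz representation theorem to the bounded linear functional $g \mapsto \int \phi_E \Div g$ on $\Gamma_0^1(T\Omega)$ extends $A \mapsto P(E, A)$ to a positive Radon measure on $M^{n+1}$. Lemma \ref{perl2} (locality, subadditivity as equality, and complementation) supplies precisely the set-theoretic compatibility needed to handle this extension on Borel sets rather than merely on open sets, which justifies the hint ``By applying Lemma \ref{perl2}''.

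Next, I invoke the structure theorem recalled in the excerpt: up to a set of $H_n$-measure zero, $\partial^* E$ decomposes as a countable union of compact $C^1$ hypersurfaces, and moreover for every Borel set $A$ one has the identity
\begin{equation}
P(E, A) = H_n(A \cap \partial^* E).
\end{equation}
This is the sole non-trivial input; from Lemma \ref{perl1}--\ref{perl2} alone one could not bridge the variational definition of perimeter with a Hausdorff-type estimate.

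Finally, for any Borel set $B$ with $H_n(B) = 0$, monotonicity of $H_n$ gives $H_n(B \cap \partial^* E) \leq H_n(B) = 0$, and the identification above yields $P(E, B) = 0$. The main obstacle is really the structure-theorem identification of the perimeter measure with $H_n$ on $\partial^* E$; once that is borrowed from Giusti, the absolute-continuity statement is a one-line deduction.
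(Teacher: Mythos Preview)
Your argument is correct in outline: invoking the De Giorgi structure theorem to identify the perimeter measure with $H_n \llcorner \partial^* E$ makes the absolute continuity immediate. Two caveats are worth flagging. First, the excerpt from \cite[Theorem~4.4]{Giu} that the paper actually quotes is only the rectifiability of $\partial^* E$; you need the stronger measure identification $P(E,A) = H_n(A \cap \partial^* E)$, which is also in Giusti but is not what the paper cites verbatim, and you should check it transfers to the Riemannian setting via local charts. Second, your remark that ``from Lemma~\ref{perl1}--\ref{perl2} alone one could not bridge the variational definition of perimeter with a Hausdorff-type estimate'' turns out to be mistaken, as the paper's own proof demonstrates.

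The paper takes a more elementary and self-contained route that avoids the structure theorem entirely. It reduces to $B$ compact, covers $B$ by finitely many small geodesic balls $\mathfrak B_i$ with $\sum r_i^n < \epsilon$, sets $A_\epsilon = \bigcup_i 2\mathfrak B_i$, and uses subadditivity (Lemma~\ref{perl2}) together with the crude bound $P(A_\epsilon, M^{n+1}) \leq C\sum (2r_i)^n$ to get $P(E \cup A_\epsilon, M^{n+1}) \leq P(E, M^{n+1} \setminus B) + 2^n C\epsilon$. Since $H_{n+1}(A_\epsilon) \to 0$, lower semi-continuity of perimeter under $L^1_{\loc}$ convergence gives $P(E, M^{n+1}) \leq P(E, M^{n+1}\setminus B)$, hence $P(E,B) = 0$. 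This buys independence from the deep regularity theory and keeps the argument within the toolkit already assembled in Sections~\ref{fbv}--\ref{per}; your approach is faster but imports a much heavier theorem.
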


\begin{proof}
We may assume without loss of generality that $B$ is a compact set. Let $\mathfrak B_r$ be a geodsic ball cenetered at some point. Hence, for some constant $C = C(n)$, there exists an $R$ such that for all $r < R$ we have that $H_n(\partial \mathfrak B_r) < C r^n$ and $H_{n+1}(\mathfrak B_r) < C r^{n+1}$. Since $H_n(B) = 0$, for any $\epsilon > 0$, we can cover $B$ in a finite number of balls $\mathfrak B^\epsilon_i$ of radius $r_i^\epsilon$ and center $x_i^\epsilon$ such that $\sum\limits_i (r_i^\epsilon)^n < \epsilon.$ Now, let $A_\epsilon \equiv \bigcup \limits_i B_{2r_i^\epsilon}(x_i^\epsilon).$ We have $B \subset \subset A_\epsilon$ and
\begin{equation}\label{per7}
P(A_\epsilon, M^{n+1}) \leq C \sum\limits_i (2r_i^\epsilon)^n \leq 2^nC\epsilon.
\end{equation}
So, by Lemma \ref{perl2} we have
\begin{equation}\label{per8}
\begin{aligned}
P(E\cup A_\epsilon, M^{n+1})&= P(E \cup A_\epsilon, M^{n+1}\backslash B)\\
&\leq P(E,M^{n+1}\backslash B) + P(A_\epsilon, M^{n+1})\\
& \leq P(E, M^{n+1}\backslash B) + 2^n C\epsilon.
\end{aligned}
\end{equation}

Since, $H_{n+1}(B) = 0$ as well, we have that $H_{n+1}(A_\epsilon) \to 0.$ Therefore, upon passing to the limit as $\epsilon \searrow 0$, the lower semi-continuity of perimeter gives
\begin{equation} P(E, M^{n+1}) \leq P(E, M^{n+1}\backslash B).\end{equation}
Hence, $P(E,B) = 0$.
\end{proof}


\section{Sobolev Inequality and Consequences}\label{sic}

We first have the Sobolev inequality of D. Hoffman and J. Spruck\cite{HS}.
\begin{theorem}(See \cite[Theorem 2.2]{HS})\label{sict1}
Let $M\subset N$ be compact with boundary $\partial M$ and assume that $K_N \leq b^2$. Then
\begin{equation}\label{sic1}
|M|^{(m-1)/m} \leq C(m,\alpha) \left(\text{Vol} (\partial M) + \int\limits_M |H| dV_M\right),
\end{equation}
provided that
\begin{equation}\label{sic2}
b^2 (1-\alpha)^{-2/m}(\omega_m^{-1} \text{Vol} M)^{2/m} \leq 1
\end{equation}
and
\begin{equation}\label{sic3}
2\rho_0 \leq \bar R(M).
\end{equation}
Here, $0 < \alpha < 1$,
\begin{equation}\label{sic4}
\rho_0 = \begin{cases}
 b^{-1} \sin^{-1} b (1-\alpha)^{-1/m}(\omega_m^{-1} \text{Vol}M)^{1/m} & \text{for b real} \\
(1-\alpha)^{-1/m}(\omega_m^{-1} \text{Vol} M)^{1/m} & \text{for b imaginary} \end{cases},
\end{equation}
and $\bar R(M) = $minimum distance to the cut locus in $N$ for all points in $M$. Finally,
\begin{equation}
C(m,\alpha) = C(m) \alpha^{-1}(1-\alpha)^{-1/m}.
\end{equation}
\end{theorem}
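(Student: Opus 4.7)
The result is the Hoffman-Spruck Sobolev inequality, which extends the Michael-Simon Sobolev inequality from Euclidean submanifolds to submanifolds of a Riemannian manifold $N$ whose sectional curvatures satisfy $K_N \leq b^2$. My plan is to prove it via a monotonicity-type estimate for the mass density of $M$ in geodesic balls of $N$, followed by a covering argument that converts pointwise density bounds into the isoperimetric inequality, from which the $L^1$ Sobolev inequality follows by the coarea formula.

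First, I would fix a point $y \in N$, let $r(\cdot) = \mathrm{dist}_N(y, \cdot)$, and apply the first variation formula on $M$ to the vector field $X = \eta(r)\,\nabla^N r$, where $\eta$ is a smooth cutoff. This gives
\begin{equation}
\int_M \mathrm{div}_M X \, dV_M \;=\; -\int_M \langle X, H \rangle\, dV_M \;+\; \int_{\partial M} \langle X, \nu \rangle\, dV_{\partial M}.
\end{equation}
The term $\mathrm{div}_M X$ decomposes as $\eta'(r)|\nabla^M r|^2 + \eta(r)\,\Delta_M^{(\nabla r)} r$, and the tangential Laplacian of $r$ is controlled from below by Hessian comparison: the hypothesis $K_N \leq b^2$ implies $\mathrm{Hess}_N r \geq \mathrm{ct}_b(r)\,(g_N - dr \otimes dr)$ on the complement of the cut locus of $y$, where $\mathrm{ct}_b = \mathrm{cn}_b/\mathrm{sn}_b$ is the model comparison function. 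The hypothesis $2\rho_0 \leq \bar R(M)$ guarantees that the balls we use remain outside the cut locus.

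Second, I would choose $\eta$ concentrated near the ball of radius $\rho$ and integrate to obtain a Michael-Simon-type monotonicity: a differential inequality for $\mathrm{Vol}(M \cap B_\rho(y))$ in terms of $\rho$, the ambient curvature quantity $\mathrm{sn}_b(\rho)$, and the two source terms on the right-hand side. Hypotheses \eqref{sic2} and \eqref{sic4} are exactly what is needed to keep the curvature correction $\mathrm{sn}_b(\rho)^{-m}$ comparable to $\rho^{-m}$ on the relevant range of radii, so that the monotonicity reduces to an approximately Euclidean density growth bound up to multiplicative factors depending on $(1-\alpha)$. This yields: for every $y$ in the support of $M$ at which the density is at least $\theta$, one has
\begin{equation}
\mathrm{Vol}(M \cap B_\rho(y)) \;\geq\; c(m)\,\theta\,\rho^{m} \;-\; C(m,\alpha)\bigl(\mathrm{Vol}(\partial M) + \tfrac{1}{2}\textstyle\int_M |H|\,dV_M\bigr)\rho.
\end{equation}

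Third, I would run a Vitali covering argument. Letting $\theta$ be chosen so that the density exceeds it on a set covering $M$ up to a small fraction of $|M|$, I cover $M$ by disjoint balls on which the lower bound above holds, sum, and use the fact that in the ambient $N$ the balls of radius up to $\rho_0$ are embedded. Rearranging gives the inequality
\begin{equation}
|M|^{(m-1)/m} \;\leq\; C(m,\alpha)\Bigl(\mathrm{Vol}(\partial M) + \int_M |H|\, dV_M\Bigr),
\end{equation}
with the claimed dependence $C(m,\alpha) = C(m)\alpha^{-1}(1-\alpha)^{-1/m}$ arising from the explicit constants in the comparison and the covering multiplicity. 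The main technical obstacle I expect is calibrating the cutoff $\eta$ and the radius $\rho$ against the curvature correction $\mathrm{ct}_b$ so that the error picked up from Hessian comparison is absorbed into the $\alpha$-factor; this is precisely where the constraints \eqref{sic2}, \eqref{sic3} are consumed. Since the paper invokes this result as a black box (``see Hoffman--Spruck''), I would not reproduce the full calculation but only record this outline.
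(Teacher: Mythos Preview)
Your proposal is appropriate: the paper does not prove this statement at all but simply quotes it from Hoffman--Spruck \cite{HS}, as you yourself note in your final sentence. There is therefore nothing in the paper to compare your argument against.

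Your outline is a faithful sketch of the Hoffman--Spruck strategy (first variation applied to a radial vector field, Hessian comparison under $K_N\le b^2$, then a density/covering argument), and you correctly identify where the hypotheses \eqref{sic2} and \eqref{sic3} enter. One small caution: the original argument is closer in spirit to Michael--Simon's direct monotonicity-to-Sobolev passage than to a Vitali covering, so if you ever flesh this out you may find the covering step unnecessary; but as an outline meant only to record the mechanism behind a cited black box, what you have written is fine.
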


So, we see that for the application of the Sobolev Inequality, as $\text{Vol}(M)$ gets larger, we must reduce $1-\alpha$ which causes worse values of $C(m,\alpha)$. Next, we have a proposition giving estimates for the intersection of minimal sets with balls.

\begin{proposition}\label{sicp1}
Let $E$ be minimal in $W\subset M^n\times \mathbb R$, and $x_0\in E\cap W.$ Let $r< \text{dist}(x_0, \partial W)$. Let $M = B_r(x_0)$ and $b, \rho_0$, $\alpha$, and $\bar R(M)$ be as in Theorem \ref{sict1} for $M^{n+1} = B_r(x_0)$ and $N^{n+1} = M\times\mathbb R$.We have
\begin{equation}\label{sic5}
|E \cap B(x_0, r)| \geq (\frac{r}{(n+1)c(n,\alpha)})^{n+1}
\end{equation}
\end{proposition}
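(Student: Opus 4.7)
The plan is to combine the minimality of $E$ with a Sobolev/isoperimetric inequality to derive a differential inequality for the volume function $V(\rho) := |E \cap B(x_0, \rho)|$, and then integrate it.

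First, I would use minimality to bound the perimeter of $E$ inside $B_\rho$ in terms of the slice $H_n(E \cap \partial B_\rho)$. Comparing $E$ with the competitor $E \setminus B_\rho$, which agrees with $E$ outside of $B_\rho$, and using Lemma \ref{perl1}(ii) gives
\begin{equation*}
P(E, B_\rho) + P(E, W \setminus \bar B_\rho) = P(E, W) \leq P(E \setminus B_\rho, W) = P(E, W \setminus B_\rho) + H_n(E \cap \partial B_\rho).
\end{equation*}
Since $P(E, \partial B_\rho) = 0$ for a.e.\ $\rho$ by Lemma \ref{perl3}, the cancellation yields $P(E, B_\rho) \leq H_n(E \cap \partial B_\rho)$ for a.e.\ $\rho$. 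Combined with Lemma \ref{perl1}(i), the total perimeter of the Caccioppoli set $E \cap B_\rho$ in $M^n \times \mathbb{R}$ satisfies
\begin{equation*}
P(E \cap B_\rho, M^n\times\mathbb R) \leq 2\, H_n(E \cap \partial B_\rho).
\end{equation*}

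Second, I would apply Theorem \ref{sict1} with $m = n+1$ to $E \cap B_\rho$ viewed as a region in $N = M^n \times \mathbb{R}$. Since $E \cap B_\rho$ is $(n+1)$-dimensional in an $(n+1)$-dimensional ambient, the mean-curvature integrand plays no role, so the conclusion reduces to an isoperimetric statement $V(\rho)^{n/(n+1)} \leq C(n+1,\alpha)\, P(E \cap B_\rho, M^n \times \mathbb R)$. For $r < \mathrm{dist}(x_0, \partial W)$ sufficiently small, the constraints \eqref{sic2}--\eqref{sic3} can be satisfied with a fixed $\alpha \in (0,1)$. Combining with the previous step,
\begin{equation*}
V(\rho)^{n/(n+1)} \leq 2\, C(n+1,\alpha)\, H_n(E \cap \partial B_\rho).
\end{equation*}

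Third, the coarea formula for $\rho \mapsto |E \cap B_\rho|$ identifies $V'(\rho) = H_n(E \cap \partial B_\rho)$ a.e., producing the differential inequality $V'(\rho) \geq c\, V(\rho)^{n/(n+1)}$ with $c = 1/(2C(n+1,\alpha))$. Rewriting this as $\tfrac{d}{d\rho}\bigl[V(\rho)^{1/(n+1)}\bigr] \geq c/(n+1)$ and integrating from $0$ to $r$, with $V(0) = 0$, gives the stated bound after possibly absorbing the factor of $2$ into the constant $c(n,\alpha)$.

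The main obstacle is the application of Theorem \ref{sict1}, which is stated for smooth submanifolds rather than Caccioppoli sets. I would either approximate $\partial^*(E \cap B_\rho)$ by smooth hypersurfaces via the $C^1$ rectifiability of the reduced boundary (cf.\ the discussion of $\partial^* E$ preceding Section 3), or directly invoke the standard perimeter-theoretic formulation of the Hoffman--Spruck isoperimetric inequality. A secondary subtlety is establishing $V(\rho) > 0$ for all $\rho > 0$, which follows from the measure-theoretic requirement that $x_0 \in \partial^* E$ or $x_0$ lies in the support of $\phi_E$; this is the correct interpretation of the hypothesis $x_0 \in E$.
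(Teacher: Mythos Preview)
Your argument is essentially the paper's own proof: minimality against the competitor $E\setminus B_\rho$ yields $P(E,B_\rho)\le H_n(E\cap\partial B_\rho)$, Lemma~\ref{perl1}(i) then gives $P(E\cap B_\rho)\le 2H_n(E\cap\partial B_\rho)$, and the Hoffman--Spruck inequality plus coarea produces the ODE you integrate (the paper runs the comparison step via an $\epsilon$-enlargement $B_{\rho+\epsilon}\to B_\rho$ rather than invoking Lemma~\ref{perl1}(ii), but the content is identical). One small correction: the fact that $P(E,\partial B_\rho)=0$ for a.e.\ $\rho$ does \emph{not} come from Lemma~\ref{perl3} (since $H_n(\partial B_\rho)>0$); it follows simply because the finite measure $|D\phi_E|$ can charge at most countably many spheres.
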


\begin{proof}
We first claim that for any $B_\rho \subset\subset W$, we have that
\begin{equation}\label{sic6}
\int\limits_W |D\phi_E|\leq \int\limits_W |D\phi_{E\backslash B_\rho}|.
\end{equation}
From the minimality of $E$ in $W$ we get that
\begin{equation}\label{sic7}
\int\limits_{B_\rho}|D\phi_E| \leq \int\limits_{B_{\rho+\epsilon}} |D\phi_E|
\leq \int\limits_{B_{\rho+\epsilon}}|D\phi_{E\backslash B_\rho}|.
\end{equation}

Since $\phi_{E\backslash B_\rho}^+ = \phi_E^+$ on $\partial B_\rho$, $\phi_{E\backslash B_\rho}^- = 0$ on $\partial B_\rho$, and $\phi_{E\backslash B_\rho} \equiv 0$ on $B_\rho$, we have that
\begin{equation}\label{sic8}
\int\limits_{B_{\rho+\epsilon}}|D\phi_{E\backslash B_\rho}| =
\int\limits_{B_{\rho+\epsilon} \backslash \bar B_\rho}|D\phi_{E\backslash B_\rho}| + \int\limits_{\partial B_\rho}|\phi_E^+| dH_{n}.
\end{equation}

Since $B_{\rho + \epsilon}\backslash \bar B_\rho \to \emptyset$ as $\epsilon \to 0$, we get that
\begin{equation}\label{sic9}
\int\limits_{B_\rho}|D\phi_E| \leq \int\limits_{\partial B_\rho} |\phi_E^+| dH_{n}.
\end{equation}

Now, note that for every $\rho,$ we have from Lemma \ref{perl1} that
\begin{equation}\label{sic10}
\int\limits_W |D\phi_{E\cap B_\rho}| = \int\limits_{B_\rho}|D\phi_E| + \int\limits_{\partial B_\rho}|\phi_E^-| dH_n .
\end{equation}

Now, define $E_\rho \equiv E\cap B_\rho.$ From equation \eqref{sic9} and \eqref{sic10} we get that
\begin{equation}\label{sic11}
\int\limits_W |D\phi_{E_\rho}| \leq 2H_{n}(\partial B_\rho \cap E) = 2 \frac{d}{d\rho}|E_\rho|.
\end{equation}

By Theorem \ref{sict1}, we have that
\begin{equation}\frac{d}{d\rho}|E_\rho| \geq \frac{1}{c(n,\alpha)} |E_\rho|^\frac{n}{n+1}.\end{equation}

Therefore,
\begin{equation}\label{sic12}
|E_r| \geq (\frac{r}{(n+1)c(n,\alpha)})^{n+1}.
\end{equation}
\end{proof}


\section{Minimizers of Area}\label{moa}
\begin{theorem}\label{moat1}
Let $u\in BV_{loc}(\Omega)$, $\Omega \subset M^n$, minimize the area functional. Then u is locally bounded in $\Omega$.
\end{theorem}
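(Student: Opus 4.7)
The plan is to argue by contradiction, combining two results from the earlier sections. First, the theorem immediately above shows that the subgraph $U = \{(x,t) \in \Omega \times \mathbb R : t < u(x)\}$ is a local perimeter minimizer in $Q = \Omega \times \mathbb R$. Second, Proposition \ref{sicp1} provides a uniform lower density bound $|E \cap B_r(x_0)| \geq c\, r^{n+1}$ at any point $x_0$ of a locally minimal Caccioppoli set $E$, for all sufficiently small $r$. By playing these against each other, one converts unboundedness of $u$ at large heights into a definite chunk of mass for $U$ high up in the vertical direction that is incompatible with $u \in L^1_{\loc}$.

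Concretely, I would assume for contradiction that $u$ fails to be essentially bounded above on some ball $\mathcal B_R(y_0) \subset\subset \Omega$; the case of unboundedness below is symmetric, working with $U^c$, which is also locally perimeter-minimizing by Lemma \ref{perl2}(iii). For each large integer $k$ one selects a Lebesgue point $x_k \in \mathcal B_R(y_0)$ of $u$ with $u(x_k) > k$, so that $(x_k,k)$ is a density-one point of $U$ in $Q$. After fixing a radius $r_0 > 0$, depending only on $\mathrm{dist}(\overline{\mathcal B_R(y_0)},\partial \Omega)$ and the Sobolev conditions \eqref{sic2}--\eqref{sic3}, small enough that Proposition \ref{sicp1} applies uniformly in $k$, the density bound gives
\begin{equation*}
|U \cap B_{r_0}(x_k, k)| \,\geq\, c_0 := \left( \frac{r_0}{(n+1)\, c(n,\alpha)} \right)^{\!n+1} > 0,
\end{equation*}
a positive constant independent of $k$.

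On the other hand, $B_{r_0}(x_k, k) \subset \mathcal B_{r_0}(x_k) \times (k - r_0, k + r_0)$, so Fubini yields
\begin{equation*}
|U \cap B_{r_0}(x_k, k)| \,\leq\, \int_{\mathcal B_{r_0}(x_k)} \bigl( u(x) - (k - r_0) \bigr)^+ \, dx \,\leq\, \int_{K'} \bigl( u(x) - (k - r_0) \bigr)^+ \, dx,
\end{equation*}
where $K' \subset\subset \Omega$ is a fixed compact neighborhood of $\overline{\mathcal B_R(y_0)}$ containing all of the balls $\mathcal B_{r_0}(x_k)$. Since $u \in BV_{\loc}(\Omega) \subset L^1_{\loc}(\Omega)$, the integrand is pointwise dominated by $|u| \in L^1(K')$ and converges to $0$ a.e. as $k \to \infty$, so dominated convergence forces the right side to $0$, contradicting $c_0 > 0$. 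The main obstacle is verifying that Proposition \ref{sicp1} can be invoked with constants uniform in $k$ despite $(x_k,k)$ having arbitrarily large height; this is handled by the product structure of $M^n \times \mathbb R$, which makes both $\mathrm{dist}((x_k,k), \partial Q)$ and the geometric conditions \eqref{sic2}--\eqref{sic3} invariant under vertical translation, so a single small $r_0$ works for every $k$.
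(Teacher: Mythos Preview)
Your proof is correct and follows essentially the same approach as the paper: both argue by contradiction, invoke the fact that the subgraph $U$ locally minimizes perimeter, apply the lower density bound of Proposition~\ref{sicp1} at points $(x_k,k)\in U$ lying at arbitrarily large height, and derive a contradiction with $u\in L^1_{\loc}$. The only cosmetic difference is in the endgame: the paper places disjoint balls at heights $2iR$, $i=1,\dots,m$, and sums their masses to force $\int_{K_R}|u|\geq cmR^{n+1}\to\infty$, whereas you use a single ball at height $k$ and let dominated convergence drive $\int_{K'}(u-(k-r_0))^+\,dx\to 0$.
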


\begin{proof}
Suppose that there exists a compact set $K \subset \Omega$ such that $u$ is not bounded on $K$. We may assume $K$ is a small closed geodesic ball. Let $R \leq \frac{1}{2}\Dist(K,\partial\Omega)$. Let $M = K \times I$ for a compact interval $I\subset\mathbb R$, and $N = \Omega \times \mathbb R$. Finally, let $b, \alpha, \bar R$, and  $\rho_0$ be as in Theorem \ref{sict1}. Note that the geometry of $\Omega \times \mathbb R$ gives that the results of Theorem \ref{sict1} only depend on the size of $I$.

 For every integer $m \geq 0$ there exists a point $x_m \in K$ such that $u(x_m) > 2mR$. Denote the subgraph of $u$ by $U = \{(x,y) | y< u(x)\}$. It follows that the points $z_i = (x_i, 2iR)\in U$.
 From Proposition \ref{sicp1} we have
\begin{equation}\label{moa1}
|U \cap B(z_i, R)| \geq C R^{n+1},
\end{equation}
where $c$ depends on the curvature of $\MRR$ and on $n$.

Then,
\begin{equation}\label{moa2}
\int\limits_{K_R}|u| \geq \sum\limits_{i=1}^m |U \cap B(z_i, R)| \geq cmR^{n+1},
\end{equation}
where $K_R \equiv \{x \in \Omega : \Dist (x,K) < R \}.$ Since $m$ is arbitrary,  this would imply that $\int\limits_{K_R}|u| = +\infty$, contrary to the hypothesis.
\end{proof}
\begin{remark}\label{moar1}
From this proof, we can see that, for a general boundary data $\varphi,$ in order to prove a local $C^0$ bound for the area functional minimizer $u$ we have to assume some restriction on $\bar R.$ For our later application (see Section \ref{tdp}), we can sacrifice the generality of boundary data $\varphi$
to get rid of the restriction on $\bar R.$
\end{remark}

\begin{theorem}\label{moat1'}
Assume $\partial\Omega$ is Lipschitz-continuous and let $\varphi\in L^{\infty}(\partial\Omega).$
Then $\mathcal A_\varphi(u, \mathfrak B),$ where $\Omega\subset\mathfrak B,$ attains its minimum at some $u$ in $BV(\Omega).$
Moreover, $u\in L^{\infty}(\Omega)$ and $\|u\|_{\infty}\leq M$ for some $M=M(\|\varphi\|_{\infty}).$
\end{theorem}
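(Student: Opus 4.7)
The existence portion is essentially a restatement of what we already have: by Remark~\ref{funr2}, minimizing $\mathcal A_\varphi(\cdot,\mathfrak B)$ over the class $\{v\in BV(\mathfrak B): v=\varphi \text{ on } \mathfrak B\setminus\bar\Omega\}$ is equivalent to minimizing $J(\cdot,\Omega)$ over $BV(\Omega)$, and the latter has a minimizer by Theorem~\ref{funt1}. Set $M:=\|\varphi\|_{L^\infty(\partial\Omega)}$. The content of the theorem is therefore the $L^\infty$ bound, which I propose to obtain by a truncation argument.

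First I would arrange for the extended boundary data to be bounded. Apply Theorem~\ref{fbvt3} to extend $\varphi$ to a $W^{1,1}$ function on $\mathfrak B\setminus\bar\Omega$, then replace it by $\max(-M,\min(\varphi,M))$; since $|\varphi|\le M$ on $\partial\Omega$ the trace is unchanged, and truncation preserves $W^{1,1}$. So I may assume $|\varphi|\le M$ on all of $\mathfrak B\setminus\bar\Omega$.

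Now let $u\in BV(\Omega)$ be any minimizer produced above, and define the truncation
\begin{equation}
v(x):=\max\bigl(-M,\min(u(x),M)\bigr),\qquad x\in\Omega.
\end{equation}
Clearly $v\in BV(\Omega)\cap L^\infty(\Omega)$ with $\|v\|_\infty\le M$. The goal is to verify $J(v,\Omega)\le J(u,\Omega)$, for then $v$ is itself a minimizer and the theorem follows. For the boundary term, since $|\varphi|\le M$ on $\partial\Omega$, the trace of $v$ is the pointwise truncation of the trace of $u$ (as $t\mapsto\max(-M,\min(t,M))$ is the nearest-point projection onto $[-M,M]\ni\varphi$), whence $|v-\varphi|\le|u-\varphi|$ pointwise on $\partial\Omega$ in the trace sense.

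For the area term I claim $\mathcal A(v,\Omega)\le\mathcal A(u,\Omega)$, which is the one non-trivial step. For $u\in C^1$ this is immediate: where $|u|\le M$ we have $Dv=Du$, and where $|u|>M$ we have $Dv=0$ while $\sqrt{1+|Du|^2}\ge 1$. For general $u\in BV(\Omega)$ the argument is the usual one: approximate $u$ by smooth $u_j\to u$ in $L^1(\Omega)$ with $\mathcal A(u_j,\Omega)\to\mathcal A(u,\Omega)$ (a refinement of Theorem~\ref{fbvt1} via mollification, as in Giusti \cite{Giu}), then the corresponding truncations $v_j$ converge to $v$ in $L^1(\Omega)$, the smooth case gives $\mathcal A(v_j,\Omega)\le\mathcal A(u_j,\Omega)$, and the lower semi-continuity of Lemma~\ref{fbvl1} yields $\mathcal A(v,\Omega)\le\liminf_j\mathcal A(v_j,\Omega)\le\lim_j\mathcal A(u_j,\Omega)=\mathcal A(u,\Omega)$.

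The main obstacle is this last inequality at the $BV$ level. If the approximation from Theorem~\ref{fbvt1} only yields convergence of the variation (not of the full area functional), one can instead pass through the perimeter of the subgraph via Theorem~\ref{funt2}: writing $U_v=(U\cup L)\cap H$ with $L=\Omega\times(-\infty,-M)$ and $H=\Omega\times(-\infty,M)$, the subadditivity and complementation properties of Lemma~\ref{perl2}, together with the absolute continuity of Lemma~\ref{perl3} applied on the flat hyperplanes $\{t=\pm M\}$, give the desired monotonicity under truncation without any smooth approximation.
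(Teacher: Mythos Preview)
Your proof is correct and uses essentially the same idea as the paper: truncation at level $\pm\|\varphi\|_\infty$ does not increase $\mathcal A_\varphi$, so a bounded minimizer exists. The paper truncates the minimizing sequence (after smoothing) and then passes to a limit, while you first obtain a minimizer via Theorem~\ref{funt1} and then truncate it; the core step---that the area of the truncation is no larger---is identical in both arguments.
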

\begin{proof}
We define a constant subsolution and constant supersolution by
\begin{equation}\label{moa8}
\overline u=c_1>\|\varphi\|_{L^{\infty}(\mathfrak B)},
\end{equation}
and
\begin{equation}\label{moa9}
\underline{u}=c_2<-\|\varphi\|_{L^{\infty}(\mathfrak B)}.
\end{equation}
It's easy to see that both $\overline u$ and $\underline{u}$ satisfy equation
\begin{equation}\label{moa10}
\Div\frac{Dv}{W}=0,\,\,\mbox{in $\mathfrak B$}.
\end{equation}
Now, let $u_j\in BV(\Omega)$ be a minimizing sequence, that is
\begin{equation}\inf \mathcal A_\varphi(u, \mathfrak B)=\lim\limits_j\mathcal A_\varphi(u_j, \mathfrak B)=I.\end{equation}
Let us approximate the $u_j$ 's with smooth functions in $C^\infty(\Omega)$ which we still denote by $u_j$; see Theorem \ref{fbvt1}.

Set
\begin{equation}\overline u_j=\min\{u_j,\overline u\}\end{equation}
It's easy to verify using the second order nature of the area functional that
\begin{equation}\mathcal A_\varphi(u_j, \mathfrak B)\geq\mathcal A_\varphi(\overline u_j, \mathfrak B).\end{equation}
Analogously, set
\begin{equation}\underline u_j=\max\{\underline u, \overline u_j\},\end{equation}
we have
\begin{equation}\underline u\leq \underline u_j\leq\overline u.\end{equation}
Moreover,
\begin{equation}\label{moa11}
\mathcal A_\varphi(\overline u_j, \mathfrak B)\geq\mathcal A_\varphi(\underline u_j, \mathfrak B)
\end{equation}
Since $\underline u_j$'s are uniformly bounded in $BV(\mathfrak B),$ we can extract a
subsequence which converges in $L^1(\mathfrak B)$ to some function $u\in BV(\mathfrak B);$ see Theorem \ref{fbvt2}
Furthermore, $u\in L^{\infty}(\Omega)$ and $u=\varphi$ in $\mathfrak B-\bar\Omega.$
From the lower semicontinuity of the functional $\mathcal A_\phi$ we find that $u$ is the required minimizer.
\end{proof}


\section{Regularity}\label{reg}

\begin{lemma}\label{moal1}
On the set $L = \Omega - \Proj \Sigma \subset M^n$, where $\Sigma$ is the singular set of the subgraph $U$, the height function $u$ is regular.
\end{lemma}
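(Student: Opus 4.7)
The plan is to combine the fact (established in the preceding theorem) that the subgraph $U$ is a local perimeter minimizer in $Q=\Omega\times\mathbb{R}$ with a maximum principle argument to show $\partial U$ is locally a smooth graph over $L$. First, fix $x_0\in L$ and any $(x_0,t_0)\in\partial U$. Since $x_0\notin\Proj\Sigma$, the point $(x_0,t_0)$ lies in the reduced boundary $\partial^* U$, so by the De Giorgi--Federer regularity theory for local perimeter minimizers in Riemannian manifolds, $\partial U=\partial^* U$ is a smooth (indeed analytic) minimal hypersurface in a neighborhood of $(x_0,t_0)$.

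Next, let $\nu=\nu_T+\nu_{n+1}\partial_t$ denote the outward unit normal to $U$ along $\partial^* U$. Because $U$ is the subgraph of a function, it is invariant under downward vertical translations, so $-\partial_t$ points into $U$ at every boundary point, giving $\nu_{n+1}\geq 0$ throughout $\partial^* U$. The key step is to rule out $\nu_{n+1}(x_0,t_0)=0$. Since $\partial_t$ is a Killing field of $M^n\times\mathbb{R}$, the function $\nu_{n+1}=\langle\nu,\partial_t\rangle$ on the minimal hypersurface $\partial^* U$ is a Jacobi field, hence satisfies an elliptic equation of the form $\Delta_{\partial^* U}\nu_{n+1}+(|A|^2+\Ric(\nu,\nu))\nu_{n+1}=0$. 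Since $\nu_{n+1}\geq 0$ attains its minimum value $0$ at the interior point $(x_0,t_0)$, the strong maximum principle forces $\nu_{n+1}\equiv 0$ on the connected component of $\partial^* U$ through $(x_0,t_0)$. But then this component would be a vertical cylinder, contradicting that $U$ is the subgraph of the locally bounded function $u$ from Theorem \ref{moat1}.

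Hence $\nu_{n+1}>0$ at $(x_0,t_0)$, and the implicit function theorem expresses $\partial^* U$ locally as the smooth graph $\{t=\tilde u(x)\}$ over a neighborhood of $x_0$. Vertical completeness of the subgraph forces $\tilde u=u$ on this neighborhood, so $u$ is smooth (indeed analytic) near $x_0$ and satisfies the minimal surface equation there by the area-minimizing property of $U$.

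The main obstacle will be the strong maximum principle step, specifically the justification that $\nu_{n+1}$ satisfies the Jacobi equation in the curved setting. An equivalent and more elementary alternative is to compare $\partial^* U$ near a putative point of vanishing $\nu_{n+1}$ with the vertical cylinder over the horizontal projection of its tangent plane (which is itself a minimal hypersurface in $M\times\mathbb{R}$ because $\partial_t$ is Killing), and to invoke the strong maximum principle for minimal hypersurfaces to derive the same contradiction with the subgraph property.
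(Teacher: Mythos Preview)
Your proposal is correct and follows essentially the same approach as the paper: both use the Jacobi equation $\Delta_S\nu_{n+1}+(|A|^2+\Ric(N))\nu_{n+1}=0$ on the regular part of $\partial U$, invoke the strong maximum principle to force $\nu_{n+1}\equiv 0$ if it vanishes at a point, and then contradict the local boundedness of $u$ (Theorem~\ref{moat1}) via the vertical line structure. The only minor differences are that the paper explicitly splits the zeroth-order coefficient into its positive and negative parts before applying the maximum principle, and that the paper's contradiction is phrased as a Hausdorff measure argument showing the stronger fact $\nu_{n+1}>0$ on all of $\partial U\setminus\Sigma$, whereas you work directly over $x_0\in L$ and use that the vertical line $\{x_0\}\times\mathbb{R}$ avoids $\Sigma$.
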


\begin{proof}
It is well known that $H_{n-6}(\Sigma)=0$ (see Simon\cite{Sim}).
To see that $u$ is regular on $L$, it is sufficient to show that $\nu_{n+1} > 0$ on $\partial U \backslash \Sigma$. Suppose on the contrary that at a point $x_0 \in \partial U \backslash \Sigma$, we have $\nu_{n+1} = 0$. Then, in a neighborhood $V$ of  $x_0$ we have $\nu_{n+1}(x) \geq 0$.
Note that,
\begin{equation}\label{moa3}
\triangle_S \nu_{n+1} + (|A|^2 + \Ric(N)) = 0.
\end{equation}
 Let $C^+ = (|A|^2 + \Ric(N))^+$ and $C^- = (|A|^2 + \Ric(N))^-$. We have that
\begin{equation}\label{moa4}
\triangle_S \nu_{n+1} + C^- \nu_{n+1} = - C^+ \nu_{n+1} \leq 0,
\end{equation}
and so $\nu_{n+1} \equiv 0$.

Therefore, $\nu_{n+1}$ vanishes identically in a neighborhood of $V$ of $x_0$. Let $\Gamma = \Proj V$. We have $H_{n-1}(\Gamma) > 0$. If $z \in \Gamma$, the vertical straight line through $z$ contains a point $x \in \partial U \backslash \Sigma$ with $\nu_{n+1}(x) = 0$. If the line does not meet $\Sigma$ then our above argument gives that $v_{n+1}$ on an open set of the line. From the connectedness of $\mathbb R$, we then have that if the line does not meet $\Sigma$, then it lies entirely on $\partial U$. This is impossible since $u$ is locally bounded. Therefore, we must have $\Gamma \subset \Proj \Sigma,$ but then $H_{n-1}(\Sigma) > 0$ which is a contradiciton.
\end{proof}

\begin{proposition}\label{moap1}
Let $u \in BV_{loc}(\Omega)$ minimize the area in $\Omega$. Then $u \in W^{1,1}_{loc}(\Omega).$
\end{proposition}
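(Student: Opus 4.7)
\medskip

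\noindent\textbf{Proof plan.} The plan is to upgrade the pointwise smoothness statement of Lemma \ref{moal1} to the distributional statement $u \in W^{1,1}_{\loc}(\Omega)$ by showing that the singular part of the Radon measure $Du$ (in the Lebesgue decomposition sense) vanishes. The key quantitative input will be that the closed exceptional set $\Proj\Sigma$ on which regularity can fail is negligible for $H_{n-1}$.

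\smallskip

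\noindent\textbf{Step 1: size estimate for $\Proj\Sigma$.} Lemma \ref{moal1} already uses the Federer dimension estimate $H_{n-6}(\Sigma) = 0$ for the singular set $\Sigma \subset \partial U$ of the perimeter-minimizing subgraph. Since the vertical projection $\Proj : M^n \times \mathbb R \to M^n$ is $1$-Lipschitz, we have $H_{n-6}(\Proj\Sigma) \leq H_{n-6}(\Sigma) = 0$. Because $n-1 > n-6$, monotonicity of Hausdorff measures then yields $H_{n-1}(\Proj\Sigma) = 0$. Thus, by Lemma \ref{moal1}, $u$ is $C^\infty$ on the open set $L := \Omega \setminus \Proj\Sigma$, whose complement has vanishing $(n-1)$-dimensional Hausdorff measure.

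\smallskip

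\noindent\textbf{Step 2: killing the singular part of $Du$.} Decompose $Du$ via the Federer--Vol'pert structure theorem for $BV$ functions (applied in local coordinate charts of $M^n$):
\begin{equation*}
Du = \nabla u \, dx + (u^+ - u^-)\,\nu_u \, dH_{n-1}\!\lfloor_{J_u} + D^c u,
\end{equation*}
where $\nabla u$ is the approximate gradient, $J_u$ the jump set, and $D^c u$ the Cantor part. Because $u$ is smooth on $L$, the approximate gradient agrees with the classical gradient there, and the whole singular part $D^s u = D^j u + D^c u$ is supported in $\Proj\Sigma$. I will argue that $D^j u = 0$ as follows: if $x_0 \in J_u$ then the subgraph boundary $\partial U$ must contain the vertical segment $\{x_0\} \times [u^-(x_0), u^+(x_0)]$; on this segment the outward normal is horizontal, so $\nu_{n+1} = 0$, and Lemma \ref{moal1} forces every such point to lie in $\Sigma$. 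Hence $J_u \subset \Proj\Sigma$ and $H_{n-1}(J_u) = 0$, killing the jump part. For the Cantor part, one uses the standard fact that $|D^c u|$ annihilates sets of $\sigma$-finite $H_{n-1}$-measure; since $H_{n-1}(\Proj\Sigma) = 0$ and $D^c u$ is supported in $\Proj\Sigma$, we get $D^c u = 0$.

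\smallskip

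\noindent\textbf{Step 3: $L^1_{\loc}$ conclusion.} Combining the previous step, $Du = \nabla u \, dx$ as measures. Since $u \in BV_{\loc}(\Omega)$, the absolutely continuous density $\nabla u$ is already in $L^1_{\loc}(\Omega)$, so $u \in W^{1,1}_{\loc}(\Omega)$.

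\smallskip

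\noindent\textbf{Main difficulty.} The delicate point is the geometric identification of jumps of $u$ with vertical pieces of $\partial U$: one must verify at the level of the measure-theoretic boundary that a jump of $u$ truly produces a vertical segment in $\partial U$, and conversely that any point of $\partial U$ with horizontal normal must be singular. Both are consequences of the interior regularity theory for perimeter minimizers together with Lemma \ref{moal1}, but their careful deployment (and the invocation of the fine $BV$ structure theorem on the Riemannian domain $\Omega \subset M^n$) is the main technical content. A more self-contained alternative would avoid Federer--Vol'pert by covering $\Proj\Sigma$ by small balls and using the density estimate of Proposition \ref{sicp1} to directly show $|Du|(\Proj\Sigma) = 0$; this, however, requires additional bookkeeping to convert the $(n-6)$-dimensional smallness of $\Sigma$ into vanishing of the horizontal projection of $|D\phi_U|$ over $\Proj\Sigma$.
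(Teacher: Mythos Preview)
Your argument is correct but follows a different route from the paper. You invoke the Federer--Vol'pert structure theorem and kill the jump and Cantor parts of $Du$ separately, using that both are supported in the $H_{n-1}$-null set $\Proj\Sigma$. The paper instead works one dimension up: by Theorem~\ref{funt2} the area measure on $A$ equals $P(U,A\times\mathbb R)$, and since $H_{n-6}(S)=0$ forces $H_n(S\times\mathbb R)=0$, the absolute continuity of perimeter (Lemma~\ref{perl3}) gives $P\bigl(U,(S\cap A)\times\mathbb R\bigr)=0$ directly. Hence $\int_{S\cap A}\sqrt{1+|Du|^2}=0$, so $\int_{S\cap A}|Du|=0$, and $u\in W^{1,1}_{\loc}$. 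This bypasses the fine $BV$ structure theorem entirely and avoids the need to transplant it to the Riemannian setting via charts. Your approach, by contrast, makes explicit which structural components of $Du$ are being ruled out. One simplification on your side: the vertical-segment argument for $J_u\subset\Proj\Sigma$ is unnecessary, since smoothness of $u$ on $L=\Omega\setminus\Proj\Sigma$ already forces any approximate discontinuity point to lie in $\Proj\Sigma$.
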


\begin{proof}
Consider $\Sigma$, the singular set of the subgraph $U$. Let $S = \Proj(\Sigma)$. We have seen that $u$ is regular in $\Omega \backslash S$ and $H_{n-6}(S) = 0$. In particular $|S| = 0$. If $A \subset \subset \Omega$ is an open set, we have
\begin{equation}\label{moa5}
\int\limits_A \sqrt{1 + |Du|^2} = \int\limits_{A\backslash S} \sqrt{1 + |Du|^2} + \int\limits_{S\cap A} \sqrt{1+|Du|^2}.
\end{equation}
On the other hand, Lemma \ref{perl3} tells us that $P(U,S \times \mathbb R) = 0$, and therefore
\begin{equation}\label{moa6}
\int\limits_A \sqrt{1+|Du|^2} = \int\limits_{A\backslash S}\sqrt{1 + |Du|^2}.
\end{equation}
Hence $\int\limits_{S\cap A}|Du| \leq \int\limits_{S\cap A} \sqrt{1+|Du|^2} = 0$, and so $u\in W^{1,1}_{loc}(\Omega)$.

\end{proof}

Note that the area functional is not strictly convex on $BV(\Omega)$. For example, on compact sets of $\mathbb R$ consider different translations of the Heavyside function. However, since we now know that our minimizer $u \in W^{1,2}(\Omega)$, we have a uniqueness result for minimizers of the functional $J(u,\Omega)$ \cite{Giu}.

\begin{proposition}\label{moap2}
Let $\Omega$ be connected and let $\phi \in L^1(\partial \Omega)$. Suppose $u,v$ are two minimas of the functional $J(u,\Omega)$. We have that
\begin{equation}\label{moa7}
 v = u + \text{constant}.
\end{equation}
\end{proposition}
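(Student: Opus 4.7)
The plan is a standard strict-convexity argument: form the midpoint of two minimizers and exploit the strict convexity of $p\mapsto\sqrt{1+|p|^2}$.

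First, I would observe that by Proposition \ref{moap1} both minimizers satisfy $u,v\in W^{1,1}_{\loc}(\Omega)$, and since $u,v\in BV(\Omega)$ forces the absolutely continuous derivative measure to have finite total mass, in fact $u,v\in W^{1,1}(\Omega)$. The midpoint $w=\tfrac{1}{2}(u+v)$ then lies in $W^{1,1}(\Omega)\subset BV(\Omega)$. Because $\mathcal{A}(\cdot,\Omega)$ is defined in Definition \ref{fbvd2} as a supremum of linear functionals of its first argument, it is convex; linearity of the trace together with the triangle inequality makes the boundary penalty $u\mapsto\int_{\partial\Omega}|u-\varphi|\,dH_{n-1}$ convex as well. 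Hence $J(w,\Omega)\le\tfrac{1}{2}(J(u,\Omega)+J(v,\Omega))$. Since $u$ and $v$ both attain $\min J$, so does $w$, and equality must hold throughout the convexity inequality.

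Second, I would extract pointwise information. Using $W^{1,1}$ representatives, the pointwise convexity estimate
\[
\sqrt{1+|Dw|^2}\le\tfrac{1}{2}\bigl(\sqrt{1+|Du|^2}+\sqrt{1+|Dv|^2}\bigr)
\]
holds a.e.\ on $\Omega$ (applied fiberwise in each $T_xM$), and integration recovers $\mathcal{A}(w,\Omega)\le\tfrac{1}{2}(\mathcal{A}(u,\Omega)+\mathcal{A}(v,\Omega))$ via the identification $\mathcal{A}(f,\Omega)=\int_\Omega\sqrt{1+|Df|^2}$ for $W^{1,1}$ functions recorded just after Definition \ref{fbvd2}. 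Equality at the integral level then forces pointwise equality a.e., and the strict convexity of $p\mapsto\sqrt{1+|p|^2}$ on each fiber $T_xM$ yields $Du(x)=Dv(x)$ a.e.\ in $\Omega$.

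Finally, from $D(u-v)=0$ a.e.\ in $\Omega$ and $u-v\in W^{1,1}(\Omega)$, together with the connectedness of $\Omega$, the standard characterization of locally constant Sobolev functions gives $u-v\equiv c$ for some constant $c\in\mathbb{R}$, which is exactly the conclusion. I do not expect a substantive obstacle: the only point meriting care is the passage from the $BV$-level convexity inequality to the pointwise strict-convexity argument, which is clean because Proposition \ref{moap1} upgrades both minimizers (and therefore the midpoint, which is itself a minimizer) to the $W^{1,1}$ setting where the area integrand is an honest strictly convex function.
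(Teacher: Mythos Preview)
Your argument is correct and is precisely the approach the paper intends: the paper does not actually write out a proof of this proposition but simply records, in the sentence preceding it, that minimizers are Sobolev (via Proposition~\ref{moap1}) and then cites Giusti, whose argument is exactly the midpoint/strict-convexity scheme you carry out. There is no substantive difference.
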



Following Giusti\cite{Giu}, we then have a regularity theorem for minimizers $u$.

\begin{theorem}\label{regp1}
Let $u \in BV_{loc}(\Omega)$ minimize locally the functional $\intO\sqrt{1 + |Du|^2}$. Then, u is Lipschitz-continuous
(and hence analytic) in $\Omega$.
\end{theorem}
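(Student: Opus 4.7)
The plan is to combine the partial regularity already established with Spruck's interior gradient estimate and a dimension--based path connectedness argument. By Proposition \ref{moap1}, $u\in W^{1,1}_{\loc}(\Omega)$, and by Theorem \ref{moat1} it is locally bounded. Lemma \ref{moal1} further shows that $u$ is smooth on the open set $L=\Omega\setminus\Proj\Sigma$, where $\Sigma$ is the singular set of $\partial U$; on $L$, $u$ classically satisfies $\Div(Du/W)=0$.

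First I would apply the a priori interior gradient estimate of Spruck \cite{Spr} to the smooth solution $u|_L$. For any compact $K\subset\subset\Omega$, choose $K'$ with $K\subset\subset K'\subset\subset\Omega$; the $L^\infty$ bound on $K'$ from Theorem \ref{moat1} together with $\Dist(K,\partial K')$ and the local geometry of $M$ yields a uniform constant $C$ with $|Du(x)|\le C$ for every $x\in K\cap L$.

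Next I would promote this gradient bound on $L$ to a Lipschitz bound on $K$. Since $\Proj$ is $1$-Lipschitz and Lipschitz maps do not increase Hausdorff dimension, the estimate $H_{n-6}(\Sigma)=0$ used in the proof of Lemma \ref{moal1} yields $H_{n-6}(\Proj\Sigma)=0$, so $\Proj\Sigma$ has Hausdorff codimension at least $6$ in $\Omega$. In particular $\Proj\Sigma$ is nowhere dense, and any pair of nearby points $x,y\in K$ admits a rectifiable curve $\gamma\subset L$ joining them whose length is arbitrarily close to $\mathrm{dist}_M(x,y)$. Integrating $|Du|\le C$ along such $\gamma$ gives $|u(x)-u(y)|\le C\,\mathrm{dist}_M(x,y)$ on $L\cap K$; since the smooth representative of $u$ on $L$ agrees with the $W^{1,1}$ representative on a set of full measure, the bound extends continuously to all of $K$.

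Once $u$ is locally Lipschitz, the minimal surface operator becomes a uniformly elliptic quasilinear PDE with smooth coefficients in $Du$; standard De Giorgi--Nash--Moser theory together with Schauder bootstrapping upgrades $u$ to $C^\infty(\Omega)$, and Morrey's theorem on analyticity of solutions of elliptic equations with analytic coefficients promotes this to real analyticity. I expect the main technical obstacle to be the passage from \emph{Lipschitz on $L$} to \emph{Lipschitz on $\Omega$} near $\Proj\Sigma$, but the estimate $H_{n-6}(\Proj\Sigma)=0$ is vastly stronger than the codimension $\ge 2$ needed for the path argument, so in normal coordinate charts the step is routine, essentially as in Giusti \cite{Giu}.
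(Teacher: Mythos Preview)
Your proposal has a genuine gap in the step where you invoke Spruck's interior gradient estimate. That estimate (Theorem~1.1 in \cite{Spr}) bounds $|Du(x)|$ in terms of $\sup|u|$ over a geodesic ball $B_r(x)$ \emph{on which $u$ is a smooth solution of the minimal surface equation}, and the bound depends on the radius $r$. You apply it with $r=\Dist(K,\partial K')$, but $u$ is only known to be smooth on $L=\Omega\setminus\Proj\Sigma$, not on all of $K'$. For $x\in K\cap L$ lying close to $\Proj\Sigma$, the largest ball about $x$ contained in $L$ may have radius tending to zero, so the gradient bound from Spruck's theorem degenerates and you do not obtain a uniform constant $C$ on $K\cap L$. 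The low Hausdorff dimension of $\Proj\Sigma$ is irrelevant here: the estimate needs a genuine ball, not a full-measure set.

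This is exactly the obstacle the paper's proof is designed to circumvent. Instead of applying the gradient estimate to $u$ itself, the paper takes a small ball $\mathcal B\subset\Omega$, constructs smooth boundary data $\phi_j$ on $\partial\mathcal B$ approximating the trace of $u$ (equal to $u$ outside shrinking neighborhoods $S_j$ of the singular set), and solves the classical Dirichlet problem on $\mathcal B$ with data $\phi_j$. The resulting $u_j$ are smooth on the \emph{entire} ball $\mathcal B$, so Spruck's estimate gives uniform interior gradient bounds for the sequence $u_j$. A limit $v$ is then extracted by Arzel\`a--Ascoli; $v$ is locally Lipschitz by construction, is shown to minimize $J$ with the same trace as $u$ on $\partial\mathcal B$, and finally the uniqueness statement (Proposition~\ref{moap2}) forces $v=u$. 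In short, the paper transfers the gradient estimate from an auxiliary family of globally smooth solutions to $u$ via uniqueness, rather than applying it to $u$ directly. Your path-connectedness argument would be a nice shortcut if a uniform gradient bound on $L$ were available, but obtaining that bound is precisely the crux of the theorem.
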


\begin{proof}
Let $\mathcal B\equiv \mathcal B(x_0,R)$ be a small ball in $\Omega$. We have
\begin{equation}\label{reg1}
\int\limits_{\mathcal B} \sqrt{1+|Du|^2} dx \leq  \int\limits_{\mathcal B}\sqrt{1+|Dw|^2} dx + \int\limits_{\partial \mathcal B} |w - u| dH_{n-1}
\end{equation}
for every $w \in BV(\mathcal B)$. Since the singular set $S$ satisfies $H_{n-6}(S) = 0$, we can find a descending sequence of open sets $S_n$ such that $S_{n+1} \subset \subset S_n$, $\bigcap \limits_n S_n = S$, and $H_{n-1}(S_j \cap \partial \mathcal B) \to 0$.

Now, let $\phi_j$ be a smooth function  on $\partial \mathcal B$ satisfying
\begin{align*}
  &\phi_j   =  u \text{ in } \partial\mathcal B \backslash S_j\\
   &\sup\limits_{\partial\mathcal B}|\phi_j|\leq 2 \sup \limits_{\partial \mathcal B}|u|,   \\
  & \phi_j \to u \text{ in } L^1(\partial \mathcal B).
\end{align*}

It is well known that there exists a unique solution $u_j$ of the Dirichlet Problem with boundary datum $\phi_j$ on $\partial \mathcal B$ (see \cite{GT}). The functions $u_j$ are smooth in $\mathcal B$, and moreover
$\sup\limits_{\mathcal B}|u_j| \leq 2\sup\limits_{\partial \mathcal B} |u|.$ We have
\begin{equation}\label{reg2}
\int\limits_{\mathcal B}\sqrt{1+|Du_j|^2} \leq \int\limits_{\mathcal B}\sqrt{1+|Dw|^2} + \int\limits_{\mathcal B}|w-\phi_j|dH_{n-1}
\end{equation}
for every $w \in BV(\mathcal B)$.

From the a-priori estimate of the gradient (see Theorem 1.1 in \cite{Spr}), we conclude that the gradients $Du_j$ are equibounded
in every compact set $K \subset \mathcal B$. Using the Arzela-Ascoli Theorem and passing to a subsequence, we get uniform convergence on compact subsets of $\mathcal B$ to a locally Lipschitz-continuous function $v$. Taking $w = 0$, in equation \eqref{reg2}, we get
\begin{equation}\label{reg3}
\int\limits_{\mathcal B} \sqrt{1+|Du_j|^2} \leq |\mathcal B| +  \int\limits_{\partial \mathcal B} |\phi_j|dH_{n-1} \leq C.
\end{equation}
Therefore, $v \in BV(\mathcal B)$. Furthermore, \eqref{reg2} and the lower semi-continuity of the area functional give us that $v$ minimizes $J(u,\phi)$ on $\mathcal B$. Therefore, Proposition \ref{moap1} gives us that $v \in W^{1,1}(\mathcal B).$

We want to prove that $v$ has trace $u$ on $\partial \mathcal B$. For that, let $y \in \partial \mathcal B$ be a regular point for $u$. For $j$ sufficiently large, $y \in \partial\mathcal B \backslash S_j$, and therefore for all $k> j$, $\phi_k = u$ in a neighborhood of $y$ in $\partial\mathcal B$. We can therefore construct two functions $\phi^+$ and $\phi^-$, both of class $C^2$ on $\partial\mathcal B$ such that \\
(i) $\phi^\pm = u $ in a neighborhood of $y$ in $\partial\mathcal B,$ \\
(ii) $\phi^- \leq \phi_k \leq \phi^+$ in $\partial\mathcal B$ for $k > j.$\\
Let $u^{\pm}$ be the solutions of the Dirichlet problems with boundary data $\phi ^ \pm$ respectively. We have
$ u^- \leq u_k \leq u^+$ for all $k>j$. Hence we get that
\begin{equation}\label{reg4}
u^- \leq v \leq u^+.
\end{equation}
Therefore, $v = u$ at every regular point $y \in \partial \mathcal B$. Since $H_{n-1}(S) = 0$
we have that $v$ has trace $u$ on $\partial \mathcal B$. So equation \eqref{reg2} gives us that
\begin{equation}\label{reg5}
\int\limits_{\mathcal B}\sqrt{1+|Dv|^2} \leq \int\limits_{\mathcal B}\sqrt{1 + |Dw|^2} + \int\limits_{\partial\mathcal B}|w-u|dH_{n-1}.
\end{equation}

Since $v = u$ on $\partial \mathcal B,$ by Proposition \ref{moap2}, we have that $u = v$.
This implies that $u$ is Lipschitz-continuous and hence, analytic in $\Omega$.

\end{proof}


\section{The Dirichlet Problem}\label{tdp}

One application of Theorem \ref{funt1} is to study the solvability of
the Dirichlet problem:
\begin{align}\label{tdp1}
\Div \frac{Du}{W} & =0  \text{    in } \Omega\subset M^n,\\
u & =\varphi   \text{    on } \partial \Omega .
\end{align}
Here, we are going to follow the argument in Schulz-Williams\cite{SW} to construct
barriers on a neighborhood of the boundary, and we will show that the general solution $u$
(as obtained before) is the solution of \eqref{tdp1} when $\varphi$ satisfies certain conditions.

\begin{proposition}\label{tdpt1}
Given $n\geq 2,$ $K\in(0,1/\sqrt{(n-1)\gamma}),$ and $\gamma>1,$ there exists $\epsilon>0$
depending on $n, K, M^n,$ and $\partial\Omega$ such that if
\begin{enumerate}
\item $x^0\in\partial\Omega$ and $\partial\Omega$ is $C^2$ near $x^0$;
\item $\varphi\in L^1(\partial\Omega)$ satisfies
\begin{equation}\label{tdp2}
\varphi\leq\varphi(x^0)+\min\{K\|x-x^0\|_{M^n}, \epsilon\}
\end{equation}
where $x\in\partial\Omega\cap\mathcal N_0,$ $\mathcal N_0$ is some neighborhood of $x^0$;
\item u is the generalized solution of the Dirichlet problem
\end{enumerate}
then
\begin{equation}\label{tdp3}
\lim\limits_{x\rightarrow x_0}\sup\limits_{x\in\Omega}u(x)\leq\varphi(x^0).\\
\end{equation}
Furthermore, there is a constant $C$ depending on $n,$ $K,$ $M^n,$ and $\partial\Omega$
such that
\begin{equation}\label{tdp4}
u(x)\leq\varphi(x^0)+C\|x-x^0\|_{M^n},\,x\in\Omega\cap\mathcal N_0.
\end{equation}
\end{proposition}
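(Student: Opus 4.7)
The plan is to construct in a small neighborhood $\mathcal{N}$ of $x^0$ a smooth classical supersolution $\psi$ of the minimal graph equation that dominates $\varphi$ on $\partial\Omega\cap\mathcal{N}$ and dominates the generalized solution $u$ on the inner part of $\partial\mathcal{N}$, and then to apply a comparison principle to deduce $u\le\psi$ in $\Omega\cap\mathcal{N}$. Both \eqref{tdp3} and \eqref{tdp4} will then follow at once from $\psi(x^0)=\varphi(x^0)$ together with a Lipschitz bound on $\psi$ at $x^0$ whose constant depends only on $n$, $K$, $M^n$, and $\partial\Omega$.

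Concretely, because $\partial\Omega$ is $C^2$ near $x^0$ the distance function $d(x)=\Dist(x,\partial\Omega)$ is $C^2$ on a one-sided tubular neighborhood, and there I would set
\begin{equation}
\psi(x)=\varphi(x^0)+K\,\|x-x^0\|_{M^n}+\mu\,h(d(x)),
\end{equation}
where $h:[0,d_0]\to[0,\infty)$ is a strictly concave $C^2$ function with $h(0)=0$ and $h'(0)$ finite, chosen so that $h''(d)$ blows up at a prescribed rate tied to the exponent $\gamma$; a convenient model is $h(d)=1-(1-\nu d)^{1/\gamma}$ with $\nu<1/d_0$. On $\partial\Omega\cap\mathcal{N}$ one has $d=0$, so $\psi(x)=\varphi(x^0)+K\|x-x^0\|_{M^n}\ge\varphi(x)$ by hypothesis (2); on the inner part of $\partial\mathcal{N}$, either $d$ or $\|x-x^0\|_{M^n}$ is bounded below, so by taking $\mu$ large and $\epsilon$ small in hypothesis (2), the barrier exceeds the global bound $\|u\|_{L^{\infty}(\Omega)}$ provided by Theorem \ref{moat1'}.

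The supersolution inequality $\Div(D\psi/W_\psi)\le 0$ decomposes into two pieces: a ``cone'' contribution from $K\|x-x^0\|_{M^n}$, of order $K(n-1)/\|x-x^0\|_{M^n}$ up to curvature corrections controlled by Laplacian and Hessian comparison on $M^n$; and a ``boundary'' contribution from $\mu h(d)$, whose leading term is proportional to $h''(d)<0$ modified by $\Delta_M d$ (bounded in terms of the second fundamental form of $\partial\Omega$ and $\Ric_M$). The choice of $h$ matches the blow-up rate of $h''(d)$ against the blow-up rate of $1/\|x-x^0\|_{M^n}$, and the algebraic inequality for the net mean curvature of $\psi$ to be non-positive reduces to $K^2(n-1)\gamma<1$, which is exactly the standing hypothesis. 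Once the smooth supersolution $\psi$ is in hand, testing the minimizing property of $u$ against the competitor $\min(u,\psi)$ on $\Omega\cap\mathcal{N}$, using the equivalent perimeter formulation of Theorem \ref{funt2} together with lower semicontinuity, yields $u\le\psi$ in $\Omega\cap\mathcal{N}$. Inequality \eqref{tdp4} is then immediate from the Lipschitz bound on $\psi$ at $x^0$, and \eqref{tdp3} follows by letting $x\to x^0$.

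The main obstacle is the supersolution calculation itself: one must carry the curvature error terms from $M^n$ and from $\partial\Omega$ through the fully nonlinear operator $\Div(D\psi/W_\psi)$, absorb them into the free parameters $\mu$, $\nu$, $d_0$, $r_0$ and into $\epsilon$, and simultaneously match the blow-up rate of the concave correction $h''(d)$ against the blow-up of the cone term so as to recover the sharp threshold $K<1/\sqrt{(n-1)\gamma}$ rather than some weaker condition. A secondary technical point is converting the classical supersolution property of $\psi$ into a comparison statement against the BV minimizer $u$; this is routine once $\psi$ is smooth, by truncation inside the variational formulation.
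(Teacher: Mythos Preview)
Your additive barrier $\psi(x)=\varphi(x^0)+K\|x-x^0\|_{M^n}+\mu\,h(d(x))$ with $h'(0)$ finite cannot be a supersolution in any punctured neighborhood of $x^0$. Work on the inward normal ray $x=x^0+t\,e_n$ for small $t>0$, where $r:=\|x-x^0\|_{M^n}\approx t$ and $d(x)\approx t$. There $D\psi=(K+\mu h'(t))\,e_n$ has \emph{bounded} length, so the minimal surface operator stays non-degenerate: $g^{ij}=\delta_{ij}$ for $i,j<n$ and $g^{nn}=W^{-2}$. The Hessian of the cone part is $K/t$ in each of the $n-1$ tangential directions and $0$ in the normal direction, while the Hessian of $\mu h(d)$ contributes $\mu h''(t)$ only in the normal slot (plus bounded terms from $d_{ij}$). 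Hence
\[
g^{ij}\psi_{ij}\;=\;\frac{(n-1)K}{t}\;+\;\frac{\mu\,h''(t)}{W^2}\;+\;O(1)\;\longrightarrow\;+\infty
\quad\text{as }t\to 0^+,
\]
because for your model $h(d)=1-(1-\nu d)^{1/\gamma}$ one has $h''(0)=-\nu^2(\gamma-1)/\gamma^2$, a finite number; the blow-up of $h''$ you invoke occurs at $d=1/\nu$, not at $d=0$, and in any case $d$ and $r$ are independent coordinates near $x^0$, so a blow-up in $d$ cannot be tuned to cancel a blow-up in $1/r$. The nonlinear operator does not split over the sum $\psi=Kr+\mu h(d)$, so the two ``contributions'' cannot simply be balanced.

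The paper's barrier, following Schulz--Williams, is instead $v=\bigl(K^2\sum_i x_i^2+2\alpha(x_n-w(x'))\bigr)^{1/2}$ in geodesic normal coordinates, i.e.\ essentially $v=\sqrt{K^2r^2+2\alpha d}$. On $\partial\Omega$ one still has $v=Kr\ge\varphi-\varphi(x^0)$, but now the normal derivative $v_n\sim\alpha/v$ blows up as $x\to x^0$, so $W\to\infty$ and $g^{ij}\to\delta_{ij}-\delta_{in}\delta_{jn}$; the operator degenerates to the tangential directions. A direct computation then gives $Qv\to -v^{-1}\bigl(1-(n-1)K^2+\alpha\,\sigma^{ij}w_{ij}\bigr)$, which is negative once $\alpha$ is small, precisely because $K^2(n-1)<1$. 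It is this square-root coupling of $r$ and $d$, forcing $|Dv|\to\infty$ at the boundary point, that makes the barrier work; your additive ansatz lacks this mechanism.
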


\begin{remark}\label{tdpr1}
Condition (1) can be relaxed to an exterior ball condition on $\partial\Omega$ at $x^0,$
but for convenience we assume $\partial\Omega$ is $C^2$ near $x^0.$
\end{remark}

\begin{proof}
Since $\partial\Omega$ is $C^2$ near $x^0,$ we may assume $x^0=0$ and the interior unit normal to
$\partial\Omega$ at $x^0$ is $e_n.$ Near $x^0,$ we have local geodesic normal coordinates $x_1,\cdots, x_n$ for $M^n,$
and we may assume $\partial\Omega$ is given by
\begin{equation}\{(x', w(x'))| x'=(x_1,\cdots,x_{n-1})\}\end{equation}
near $x^0.$ Here $w$ is a $C^2$ function with $w(0)=0,\,Dw(0)=0$ and $|D^2w(0)|\leq L.$ Note that $d(x^0, x) = \sum_i x^i x^i$. For the duration of this proof, subscripts will denote derivatives respect to the chosen coordinate system.

Now consider the function
\begin{equation}\label{tdp5}
\psi(x):=K^2\sum\limits_{i=1}^{n} x_ix_i+2\alpha(x_n-w(x')),
\end{equation}
where $\alpha$ is a constant to be chosen later. The metric on $M^n$ will be denoted by $\sigma_{ij}$.
Let $v(x):=\psi^{1\backslash 2},$ then we have

\begin{align}\label{tdp6}
v_i& =\frac{1}{2}\psi^{-1\backslash 2}\psi_i=\frac{1}{2v}\psi_i \\
\label{tdp7}
v_{ij}& =-\frac{1}{4}\psi^{-3\backslash 2}\psi_i\psi_j+\frac{1}{2}\psi^{-1\backslash 2}\psi_{ij}.
\end{align}
Moreover,
\begin{align}\label{tdp8}
	\psi_i& =2K^2 x_i-2\alpha w_i,& \,1\leq i\leq n-1, \\
\label{tdp9}
	\psi_n& =2K^2 x_n+2\alpha,& \\
\label{tdp10}
	\psi_{ij}& =2K^2\delta_{ij}-2\alpha w_{ij},& \,1\leq i\leq j\leq n.
\end{align}

Now denote $Q=g^{ij}D^2_{ij}$ where we are using the connection $D$ on $M^n$ and $g^{ij} = \sigma^{ij} - \frac{v^iv^j}{1+|\nabla v|^2}$ is the inverse of the metric on the graph of $v$. We are using $\sigma$ to raise and lower indices, so $v^i =\sigma^{ij}v_j$. We have
\begin{equation}\label{tdp11}
\begin{aligned}
Qv&=g^{ij}v_{ij} - g^{ij}\Gamma_{ij}^k v_k\\
&=g^{ij}\{-\frac{1}{4}v^{-3}\psi_i\psi_j+\frac{1}{2}v^{-1}\psi_{ij}\}- g^{ij}\Gamma_{ij}^k v_k\\
&=-\frac{1}{v}\{g^{ij}v_iv_j-K^2g^{ii}+\alpha g^{ij}w_{ij}\}- g^{ij}\Gamma_{ij}^k v_k \\
&=-\frac{1}{v}\{g^{ij}v_iv_j-K^2g^{ii}+\alpha g^{ij}w_{ij}+ K^2 \Gamma^l_{ij} g^{ij} x_l - \alpha \Gamma^l_{ij} g^{ij} w_l + \alpha \Gamma^n_{ij}g^{ij}\}.
\end{aligned}
\end{equation}
 For brevity of notation, define $W=\sqrt{1+|\nabla v|^2}.$ Since,
\begin{equation}\label{tdp12}
g^{ij}v_iv_j=\frac{|\nabla v|^2}{1+|\nabla v|^2}
\end{equation}
and
\begin{equation}\label{tdp13}
g^{ii}=\sigma^{ii} - \frac{v^iv^i}{W^2},
\end{equation}
we have
\begin{align}\label{tdp14}
Qv=-\frac{1}{v}\left\{\frac{W^2-1}{W^2}+K^2\left[\frac{v^iv^i}{W^2} -\sigma^{ii}\right]+\alpha g^{ij}w_{ij}+ K^2 \Gamma^l_{ij} g^{ij} x_l - \alpha \Gamma^l_{ij} g^{ij} w_l + \alpha \Gamma^n_{ij}g^{ij}\right\}.
\end{align}
Note that as $x \to 0$ and for fixed $\alpha > 0$ we have that $|v_i| = (K + 1/K) O(1) $ for $1\leq i \leq n-1$. We also have that 
\begin{equation}
|v_n| = K O(1) + \frac{\alpha}{\sqrt{K^2 \sum_{i=1}^n x_i^2 + 2\alpha(x_n - w(x'))}}.
\end{equation}
Therefore, as $x \to 0$, we have $W \to \infty$, $\frac{v^iv^j}{W^2} \to \frac{\sigma^{in}\sigma^{jn}}{\sigma^{nn}} = \delta_{in}\delta_{jn}$,
and $g^{ij} \to \sigma^{ij} - \frac{\sigma^{in}\sigma^{jn}}{\sigma^{nn}} = \delta_{ij} - \delta_{in}\delta_{jn}$. Hence, as $x\to x_0$ we have
\begin{equation}
Qv \to  -\frac{1}{v}(1 + K^2(1 - n) + \alpha \sigma^{ij} w_{ij} ).
\end{equation}

So, by first choosing $\alpha>0$ small enough depending only on $K,n$, and $L$, we may then choose a neighborhood of $x_0$ where $Q v < 0$. Note that from our assumptions on $\varphi$ we get that $v \geq \varphi$ on $\partial \Omega$. Therefore, $v$ is a supersolution, and \eqref{tdp3}, \eqref{tdp4} follows.

\end{proof}

As an application we have the following theorem.

\begin{theorem}\label{tdpc1}
Let $\Omega$ be a bounded open subset of $M^n$ with $C^2$ boundary $\partial\Omega.$
Given $n\geq 2,$ $K\in\left(0, \frac{1}{\sqrt{(n-1)\gamma}}\right),$ and $\gamma>1,$ there exists an $\epsilon > 0$ depending only on $K$, $\Omega$, and $\partial\Omega$ such that the following holds.
Suppose $\varphi$ is Lipschitz continuous on $\partial\Omega$ and
\begin{equation}|\varphi(x)-\varphi(y)|\leq K\|x-y\|_{M^n},\,\,\mbox{if $x, y\in\partial\Omega$},\end{equation}
\begin{equation}\sup\limits_{\partial\Omega}\varphi(x)-\inf\limits_{\partial\Omega}\varphi(x)\leq\epsilon.\end{equation}
Then there exists a function $u\in C^2(\Omega)\cap C^0(\bar\Omega)$ such that u is the solution of the equation \eqref{tdp1}.
\end{theorem}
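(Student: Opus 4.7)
The plan is to build $u$ as a minimizer of the penalized area functional $J(\cdot,\Omega)$, use the regularity theory of the preceding sections to get $u\in C^\infty(\Omega)$, and then use Proposition \ref{tdpt1} at each boundary point, in both directions, to conclude continuity up to $\partial\Omega$ with the correct boundary values.

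First, by Theorem \ref{funt1} the functional $J(w,\Omega)$ attains its minimum on $BV(\Omega)$ at some $u$, and by Theorem \ref{moat1'} we may assume $u\in L^\infty(\Omega)$ with a bound depending only on $\|\varphi\|_\infty$. Theorem \ref{regp1} then yields that $u$ is Lipschitz-continuous (in fact analytic) in $\Omega$, so $u\in C^2(\Omega)$ and $u$ solves $\Div(Du/W)=0$ classically inside $\Omega$. It remains to show that $u$ extends continuously to $\bar\Omega$ with $u|_{\partial\Omega}=\varphi$.

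Fix any $x^0\in\partial\Omega$. The two hypotheses placed on $\varphi$ combine to give
\begin{equation}
\varphi(x)-\varphi(x^0)\leq\min\{K\|x-x^0\|_{M^n},\,\epsilon\}\quad\text{for all }x\in\partial\Omega,
\end{equation}
so condition (2) of Proposition \ref{tdpt1} holds, and condition (1) holds because $\partial\Omega$ is $C^2$. Choosing $\epsilon$ smaller than the value allowed by Proposition \ref{tdpt1} (which depends only on $n$, $K$, $M^n$, and $\partial\Omega$, and not on the particular point $x^0$, by compactness of $\partial\Omega$ and a finite cover argument), the proposition gives
\begin{equation}
\limsup_{\Omega\ni x\to x^0} u(x)\leq\varphi(x^0).
\end{equation}
Applying the same proposition to $-u$, which is again a generalized solution of the Dirichlet problem with boundary data $-\varphi$ (the functional $J$ and the equation $\Div(Du/W)=0$ are invariant under $u\mapsto -u$), and using the reverse Lipschitz bound $\varphi(x^0)-\varphi(x)\leq\min\{K\|x-x^0\|_{M^n},\epsilon\}$ which holds by the same two hypotheses, we obtain
\begin{equation}
\liminf_{\Omega\ni x\to x^0} u(x)\geq\varphi(x^0).
\end{equation}
Hence $u$ extends continuously to $x^0$ with value $\varphi(x^0)$. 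Since $x^0\in\partial\Omega$ was arbitrary, $u\in C^0(\bar\Omega)$ with $u=\varphi$ on $\partial\Omega$, and combined with the interior regularity we get $u\in C^2(\Omega)\cap C^0(\bar\Omega)$ solving \eqref{tdp1}.

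The main obstacle is the uniformity in $x^0$: Proposition \ref{tdpt1} provides an $\epsilon$ and a neighborhood $\mathcal N_0$ that a priori depend on $x^0$ through the $C^2$ data of $\partial\Omega$ near $x^0$. One has to verify that, because $\partial\Omega$ is compact and $C^2$, the constant $L$ controlling $|D^2 w(0)|$ in the construction of the barrier, and the size of the neighborhood on which the barrier is a supersolution, can be chosen uniformly; this then allows a single $\epsilon$ to work for every $x^0\in\partial\Omega$. The other steps, such as checking that $v$ in the proof of Proposition \ref{tdpt1} dominates $\varphi$ on $\partial\Omega\cap\mathcal N_0$ and bounds $u$ from above on $\partial\Omega\setminus\mathcal N_0$ by adjusting $\alpha$, are straightforward given the barrier and the $L^\infty$ bound on $u$.
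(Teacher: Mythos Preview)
Your proposal is correct and matches the paper's intended argument: the paper states Theorem~\ref{tdpc1} without proof, presenting it simply as ``an application'' of Proposition~\ref{tdpt1}, and your write-up supplies exactly the expected details---existence of a bounded minimizer via Theorems~\ref{funt1} and~\ref{moat1'}, interior regularity via Theorem~\ref{regp1}, and then Proposition~\ref{tdpt1} applied to both $u$ and $-u$ at each boundary point, with uniformity in $x^0$ coming from the compactness and global $C^2$ regularity of $\partial\Omega$.
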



\section{Mean Curvature Flow}\label{mcf}

During this section, for a function $u: M\to \mathbb R$, we will often need to make distinction between the covariant derivative $Du$ of $u$ as a function on M and the covariant derivative $\nabla u$ of $u$ as a function on the hyper-surface that is the graph of $u$. We will also need to make distinction between the metric $\sigma_{ij}$ on $M$ and the metric $g_{ij}$ on the graph of $u$. Since our calculations take place on $M$, we will use the convention that we only use $\sigma_{ij}$ and its inverse $\sigma^{ij}$ to raise and lower indices. For convenience, we will often use $W \equiv \sqrt{1+|Du|^2}$.

Note that the upwards normal to the graph of $u$ is $N = (1/W) (-Du, 1)$. Furthermore, by extending any function $g: M \to \mathbb R$ to $\bar g: M\times\mathbb R \to \mathbb R$ using $\bar u(x,t) = u(x)$, we easily see that
\begin{equation} \label{eq:9_1}
|\nabla g|^2 = |Dg|^2 - \la Dg, N \ra^2 \geq |Dg|^2(1 - |Du|^2/W^2) = |Dg|^2 / W^2.
\end{equation}

Now, we consider the problem of the graphical mean curvature flow
\begin{equation}\label{eq:mcf}
\begin{cases}
u_t(x,t)  =  nWH(u) = \triangle_M u - \frac{u^i u^j}{W^2} D^2_{ij} u & \text{in } \Omega\times(0,\infty), \\
u(x,t) = \phi(x) & \text{on } \partial \Omega \times (0,\infty), \\
u(x,0)  = u_0(x), &
\end{cases}
\end{equation}
where $g^{ij} = \sigma^{ij} - \frac{u^iu^j}{W^2}$. Here, we assume no conditions on the mean curvature of $\partial \Omega$. We only assume that all data are $C^\infty$ smooth, and that our compatibility condition,
\begin{equation}
u_0 = \phi \text{ on } \partial \Omega,
\end{equation}
is of order zero.

Like Oliker and Ural'tseva \cite{OU1993}\cite{OU1997}, we use solutions to the perturbed and regularized problem
\begin{equation}\label{eq:emcf}
\begin{cases}
\ue_t(x,t) = \triangle_M \ue - \frac{u^{\epsilon i} u^{\epsilon j}}{W^2}D^2_{ij}\ue - \epsilon \sqrt{1+|D \ue|^2_M} \triangle_M \ue & \text{in } \Omega\times(0,\infty), \\
\ue(x,t) = \phi(x) & \text{on } \partial\Omega \times (0,\infty) ,\\
\ue(x,0) = u_0(x).
\end{cases}
\end{equation}

For convenience of notation, we will introduce the operator $L^\epsilon f= \triangle_M f- \frac{f^if^j}{1+|Df|^2} - \epsilon \sqrt{1+|Df|^2} \triangle_M f$. Note that this regularized problem is uniformly parabolic for every $\epsilon > 0$, but we only have the zeroth order compatibility condition $u_0 = \phi$ on $\partial \Omega \times \{0\}$. Therefore, we are guaranteed to have the existence of a unique solution $\ue \in C^\infty(\Omega\times [0, \infty)) \cap C^\infty(\partial \Omega \times (0,\infty)) \cap C^0(\bar \Omega \times [0,\infty))$ \cite{Lie}. That is, $\ue$ is continuous everywhere but only $C^\infty$ away from the edge of $\Omega \times [0,\infty)$. 

By establishing appropriate estimates (uniform in $\epsilon$) for the perturbed problem \eqref{eq:emcf}, we show that there are sequences $\epsilon_i \to 0$ where we get uniform convergence in $C^\infty$ to a function $u\in C^\infty(\Omega\times(0,\infty)) \cup L^\infty([0,\infty); W^{1,1}(\Omega))$ on compact subsets of $\Omega\times (0,\infty)$. Clearly, $u$ mast satisfy the equation of the mean curvature flow. Furthermore, there is a function $\bar u$ depending only on the sequence $\epsilon_i$ such that there are sequences $t_j$ such that $u(\cdot, t_j)$ converges uniformly in $C^\infty$ on compact sets of $\Omega$ to $\bar u$. We have that $\bar u$ is independent of the choice of sequence $t_j$ for a fixed sequence $\epsilon_i$.

Just like Oliker and Ural'tseva \cite{OU1993}, we have the following estimates. Our proof is similar to that of Oliker and Ural'tseva\cite{OU1993}, but we include it for completeness.

\begin{theorem}

The solution $\ue$ to \eqref{eq:emcf} satisfies the following estimates (uniform in $\epsilon$):
\begin{align}
\sup\limits_{\bar\Omega\times [0,\infty)} |\ue| & \leq  C, \label{eq:ubound}\\
\sup\limits_{\Omega\times (0,\infty)} |\ue_t| & \leq C, \label{eq:utbound}\\
\sup\limits_{t\in(0,\infty)} \int\limits_\Omega \sqrt{1+|D\ue|^2} + \epsilon |D\ue|^2  \dx & \leq C \label{eq:intdubound}.
\end{align}
Here $C = C(u_0,\phi, \Omega)$.

\end{theorem}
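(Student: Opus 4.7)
All three estimates will follow from maximum-principle arguments for \eqref{eq:emcf} together with a single energy identity, exploiting the autonomy of the equation in $t$ and the fact that $L^\epsilon$ depends on $\ue$ only through its derivatives.

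\textbf{Estimate \eqref{eq:ubound}.} Since $L^\epsilon c \equiv 0$ for any constant $c$, the choice $M := \max(\|u_0\|_\infty, \|\phi\|_\infty)$ produces a constant supersolution of \eqref{eq:emcf} which dominates $\ue$ on the parabolic boundary. The comparison principle (valid since \eqref{eq:emcf} is uniformly parabolic for fixed $\epsilon$ once $\ue$ is known to be smooth) then gives $\ue \le M$, and the symmetric argument gives $\ue \ge -M$.

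\textbf{Estimate \eqref{eq:utbound}.} The plan is a barrier-plus-translation argument that avoids differentiating \eqref{eq:emcf} in $t$. Set $K := \sup_\Omega |L^\epsilon u_0|$; this is bounded uniformly for $\epsilon \in (0,1]$ since $u_0 \in C^\infty(\bar\Omega)$. First, the affine-in-time functions $u_0(x) \pm Kt$ are respectively a super- and a sub-solution of \eqref{eq:emcf}: their spatial derivatives agree with those of $u_0$, so $L^\epsilon(u_0 \pm Kt) = L^\epsilon u_0$, while on the parabolic boundary they compare correctly with $\ue$ because $u_0|_{\partial\Omega} = \phi$. Comparison therefore yields the initial Lipschitz-in-time bound $|\ue(x,t) - u_0(x)| \le Kt$. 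Second, by autonomy of \eqref{eq:emcf}, the time translate $\ue(\cdot,\cdot + h)$ is itself a solution, as is the vertical shift $\ue(\cdot,\cdot) + Kh$; combining the step-one bound at $t = 0$ with the agreement of lateral data feeds comparison to give $\ue(x, t+h) - \ue(x,t) \le Kh$. Dividing by $h$, sending $h \to 0^+$, and repeating symmetrically yields $|\ue_t| \le K$.

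\textbf{Estimate \eqref{eq:intdubound}.} The plan is to derive a dissipation identity for the perturbed area functional
\begin{equation*}
\mathcal{F}_\epsilon(v) := \int_\Omega \sqrt{1+|Dv|^2} + \tfrac{\epsilon}{2}|Dv|^2 \, dx,
\end{equation*}
for which \eqref{eq:emcf} is a (weighted) gradient flow. Multiplying the equation by $\ue_t / W$, integrating over $\Omega$, and integrating by parts using $\ue_t \equiv 0$ on $\partial\Omega$ for $t > 0$, should yield
\begin{equation*}
\frac{d}{dt}\mathcal{F}_\epsilon(\ue) = -\int_\Omega \frac{(\ue_t)^2}{W}\,dx \le 0.
\end{equation*}
Monotonicity then bounds $\mathcal{F}_\epsilon(\ue(\cdot,t))$ above by $\mathcal{F}_\epsilon(u_0)$, which is uniform in $\epsilon \in (0,1]$.

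The main obstacle is the time-derivative bound \eqref{eq:utbound}. The natural approach --- differentiating \eqref{eq:emcf} in $t$ and applying the maximum principle to the linear equation satisfied by $\ue_t$ --- is compromised at the corner $\partial\Omega \times \{0\}$, since zeroth-order compatibility only guarantees continuity (not $C^1$ regularity) of $\ue$ up to this corner. The barrier-plus-translation scheme above sidesteps the corner by comparing nonlinear solutions directly, requiring only the interior $C^\infty$ regularity that $\ue$ enjoys away from the corner.
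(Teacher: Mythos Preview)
Your argument for \eqref{eq:ubound} matches the paper. For \eqref{eq:utbound} and \eqref{eq:intdubound} you take genuinely different routes.

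For \eqref{eq:utbound}, the paper introduces a $\delta$-regularized problem \eqref{eq:edmcf} with first-order-compatible lateral data $\phi + \delta\psi(t/\delta)L^\epsilon u_0$, applies the maximum principle to $\ued_t$ (which now extends continuously to the corner), and sends $\delta\to 0$. Your barrier-plus-translation scheme is cleaner: it never differentiates the equation in $t$ and needs no auxiliary problem, only comparison for the full nonlinear equation (constants are super/subsolutions since $L^\epsilon$ has no zeroth-order term). Both yield the same constant $\sup_\Omega|L^\epsilon u_0|$.

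For \eqref{eq:intdubound}, however, your energy argument inherits exactly the corner obstruction you diagnosed and sidestepped for \eqref{eq:utbound}. The dissipation identity $\tfrac{d}{dt}\mathcal{F}_\epsilon(\ue)=-\int_\Omega(\ue_t)^2/W$ is valid for $t>0$, but the conclusion ``monotonicity bounds $\mathcal{F}_\epsilon(\ue(\cdot,t))$ by $\mathcal{F}_\epsilon(u_0)$'' requires $\mathcal{F}_\epsilon(\ue(\cdot,t_1))\to\mathcal{F}_\epsilon(u_0)$ as $t_1\to 0^+$, i.e.\ $D\ue(\cdot,t_1)\to Du_0$ in $L^2(\Omega)$. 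With only zeroth-order compatibility this is not automatic up to $\partial\Omega\times\{0\}$, and lower semicontinuity goes the wrong way. It is repairable (qualitative $C([0,T];H^1)$ regularity for each fixed $\epsilon$, which is standard for uniformly parabolic problems, suffices), but as written the gap is precisely the one you flagged elsewhere. The paper avoids the issue altogether by testing the divergence-form equation $\ue_t/W=D_iF^\epsilon_{i^*}$ against $\eta=\ue(\cdot,t)-u_0$ at a \emph{fixed} $t>0$; after Cauchy--Schwarz this bounds $\int_\Omega W+\epsilon|D\ue|^2$ directly by $\sup|\ue|$, $\sup|\ue_t|$, and $\|Du_0\|_\infty$, with no reference to $t=0$ at all.
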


\begin{proof}

Inequality \eqref{eq:ubound} follows easily from the maximum principle, but inequality \eqref{eq:utbound} does not. We do not have any guarantee that $u_t$ is continuous on the edge of the parabolic domain. In order to deal with this, we introduce a regularization of \eqref{eq:emcf} that has a first order compatibility condition by changing the boundary conditions on $\partial \Omega \times [0,\infty)$. Let $\psi(t) \in C^\infty([0,\infty))$ be such that $\psi(0) = 0$, $\suppt \psi \subset [0,2]$, $\psi'(0) = 1$, and $|\psi'(0)| \leq 1$. Consider the problem
\begin{equation} \label{eq:edmcf}
 \begin{cases}
\ued_t(x,t) = L^\epsilon \ued & \text{in } \Omega\times(0,\infty), \\
\ued(x,t) = \phi(x) + \delta \psi(t/\delta) L^\epsilon u_0& \text{on } \partial\Omega \times (0,\infty), \\
\ued(x,0) = u_0(x).
\end{cases}
\end{equation}

The problem \eqref{eq:edmcf} is uniformly parabolic and satisfies a first order compatibility condition. So, we are guaranteed to have a solution $\ued$ with $\ued_t$ continuous. Since \eqref{eq:edmcf} has no zeroth order terms for $\ued$, we find that $\ued_t$ satisfies an equation of the form
\begin{equation} \label{eq:utflow}
\ued_{tt} = a^{ij}(x,t) D^2_{ij} \ued_t + b^i(x,t) D_i \ued_t,
\end{equation}
where $a^{ij}, b^i$ are smooth on $\Omega\times(0,\infty)$. On $\partial \Omega\times [0,\infty)$ we have that $|\ued_t| \leq \sup |\psi'(t/\delta) L^\epsilon u_0(x)| \leq \sup |L^\epsilon u_0(x)|.$ Therefore, by the maximum principle, we have that $|\ued_t| \leq \sup |L^\epsilon u_0|$.

Now, we may use that $\ue - \ued$ also satisfies an equation of the form \eqref{eq:utflow}. Therefore, from the boundary conditions of \eqref{eq:edmcf} and $\ue, \ued \in C(\bar \Omega \times [0,\infty))$, we see that $| \ue - \ued | \leq \delta \sup\limits_{\partial \Omega} |L^\epsilon u_0|$. Therefore, $\ued \to \ue$ uniformly as $\delta \to 0$. So, therefore, for any $(x,t) \in \Omega \times (0,\infty)$, we have that
\[
|\frac{\ue(x,t+h) - \ue(x,t)}{h}| \leq \sup |L^\epsilon u_0|.
\]
Therefore, away from the edge of the domain, we have that $|\ue_t| \leq |L^\epsilon u_0| \leq C$. Hence, we have \eqref{eq:utbound}.

To show \eqref{eq:intdubound}, we use a cutoff function $\eta \in C_0^1 (\Omega)$, the fact that $\frac{\ue_t}{W} - \Div_M \frac{D\ue}{W} - \epsilon \triangle_M \ue = 0$ in $\Omega \times (0,\infty)$, and integration by parts to get
\begin{equation} \label{eq:9_9}
\int\limits_{\Omega\times\{t\}} \frac{\ue_t}{W}\eta + \frac{\la D\ue, D\eta \ra}{W} + \epsilon \la D\ue, D\eta \ra = 0.
\end{equation}
We then use the choice of test function $\eta(x) = \ue(x,t) - u_0(x)$ in \eqref{eq:9_9} to get
\begin{equation}
\int\limits_{\Omega\times\{t\}} \sqrt{1+|D\ue|^2} + \epsilon |D\ue|^2 = \int\limits_{\Omega\times\{t\}} \frac{1+ \la D\ue, Du_0 \ra - (\ue - u_0)\ue_t}{W} + \epsilon \la D\ue, D u_0 \ra.
\end{equation}
Using a Cauchy-Schwarz inequality we then have
\begin{align}
\int\limits_{\Omega\times\{t\}} \sqrt{1+|D\ue|^2} + (\epsilon/2) |D\ue|^2 & \leq  \int\limits_{\Omega\times\{t\}} \frac{1+ \la D\ue, Du_0 \ra - (\ue - u_0)\ue_t}{W} + (\epsilon/2) |Du_0|^2 \\
& \leq C.
\end{align}
From this, \eqref{eq:intdubound} follows.
\end{proof}

Now, to guarantee the convergence of $\ue$ as $\epsilon \to 0$ we need uniform (in $\epsilon$) estimates on the spatial derivatives $|D\ue|$. Oliker and Ural'tseva \cite{OU1993} use an iteration scheme to construct estimates for $|D\ue|$. This iteration scheme depends on the use of the Sobolev inequality. First they make use of iteration and the Sobolev inequality $|u|_{L^{p^*}(\mathbb R^n)} \leq C_n (\int\limits_{\mathbb R^n} |Du|^p)^{1/p}$ for functions on $\mathbb R^n$ to put estimates on $\epsilon |D\ue|.$

From this estimate, they use a form of the Sobolev inequality by Ladyzhenskaya-Ural'tseva \cite{LU} for functions on graphs realized as surfaces that obey elliptic equations of a certain type. Letting $S_u$ being the graph of u, this Sobolev Inequality takes the form $\int\limits_{S_u} f^2 \leq D H_n(S_u)^{2/n} \int\limits_{S_u} |\nabla f|^2$ where $D$ depends on certain estimates of the coefficients of the elliptic equation. From here, they are able to obtain uniform bounds for $|D\ue|$.

We will use a similar procedure, but we must be careful to use our quantities in a tensorial manner. Following Oliker and Ural'tseva \cite{OU1993}, on the tangent bundle $TM$ we define the quantities
\begin{equation}
F : TM \to \mathbb R
\end{equation}
\begin{equation}
F^\epsilon : TM \to \mathbb R
\end{equation}
by
\begin{align}
F(x,v) & = \sqrt{1 + |v|_M^2}, \\
F^\epsilon(x,v) & = F(x,v) + (\epsilon/2) |v|_M^2.
\end{align}

We will need to make use of some geometry on $TM$. A good reference for some of this discussion in contained in Gudmundsson-Kappos\cite{GK} and doCarmo\cite{Car}. Let $\pi: TM \to M$ be the natural bundle map for the tangent bundle. Consider any vector $X \in T_p M$. There is a natural map sending $X$ to the vertical fiber which we will denote by $X_v$. In canonical coordinates $(x,w)$, we have that if $X = X^i \partial_i$ then $X_v^i =  (0, X^i)$. The Levi-Cevita connection on $M$ gives us a way to identify a horizontal vector space of $T_{(p,w)}TM$. A vector $X\in T_pM$ lifts to a vector $X_h\in T_{(p,w)} TM$. $X_h$ is determined as follows. Let $p(t)$ be a curve in $M$ such that $p(0) = p$ and $p'(0) = X$. Let $(p(t), w(t))$ be the curve in $TM$ determined by the parallel transport of $w$ in the direction of $X$. Then, $X_h = (X, w'(t))$. Hence, we see that in canonical coordinates $X^h (p,w) = (X, -\Gamma^k_{ij} w^i X^j)$. Every tangent space $T_{(p,w)} TM$ can be decomposed into a direct sum of horizontal and vertical spaces.

For any frame $\{ e_i \}$, we denote the horizontal lift as $\{e^h_i\}$ and the vertical lift as $\{e^v_i\}$. The Sasaki metric $\bar \sigma$ on $TM$ is defined by \cite{GK}, \cite{Kow}
\begin{enumerate}
\item  $\bar\sigma (X_h, Y_h) = \sigma(X,Y)$,
\item $\bar\sigma(X_h, Y_v) = 0$,
\item $\bar\sigma(X_v, Y_v) = \sigma(X,Y).$
\end{enumerate}
From the formula for $X_h$ and $X_v$ we have the following. For $i = 1,2$, let $(\gamma_i(t), w_i(t))$ be a curve in $TM$ with $(\gamma_i(0), w_i(0)) = (p,w)$ and $(\gamma_i'(0), w_i'(0)) = X_{hi} + Y_{vi}$. Then $\bar \sigma (X_{h1} + Y_{v1}, X_{h2} + Y_{v2}) = \sigma(X_1, X_2) + \sigma\left(\frac{D}{dt} w_1(0), \frac{D}{dt} w_2(0)\right)$.
Finally, we have the following formulas for the Levi-Cevita connection $\bar D$ associated with the Sasaki metric \cite{GK}, \cite{Kow}:
\begin{align}
(\bar D_{X_h} Y_h)_{(p,w)} & = (D_X Y)_{h,(p,w)} - \frac{1}{2} \left(R_p(X,Y)w\right)_v,\label{eq:conneq1}\\
(\bar D_{X_h} Y_v)_{(p,w)} & = (D_X Y)_{v,(p,w)} + \frac{1}{2} \left(R_p(w,Y)X\right)_h,\label{eq:conneq2}\\
(\bar D_{X_v} Y_h)_{(p,w)} & = \frac{1}{2} \left( R_p(w, X)Y\right)_h,\label{eq:conneq3}\\
(\bar D_{X_v} Y_v)_{(p,w)} & = 0\label{eq:conneq4}.
\end{align}

We will also need to make use of a certain type of pull-back of tensors on $TM$. Note that the map $p\in M \to (p, D\ue) \in TM$ gives us a section of $TM$. We use this to construct a sort of identification map $f : T_p M \to T_{(p,D\ue)} TM$ by $(p,v) \to (p,D\ue,0, v)$. Note that $f_p(W) = W_v(p,D\ue)$, the vertical lift of $W$ at $(p,D\ue)\in TM$. If $T$ is a tensor on $TM$, then we use $f$ to define a pull-back by $f^* T_p(X) = T_{(p,D\ue)}(f(X))$. For brevity and clarity of notation we will use the notation $T_{\is} = f^*(T)_i$ and $T_{\js,\is} = f^*(D_i T_j)$. For scalar functions $G: TM \to \mathbb R$, we will also use the shorthand notation $G_{\is} = f^*(\bar DG)_i$ and $G_{\is\js} = f^*(\bar D^2 G)_{ij}$. We will see later that $F_{\is}$ is the geometric equivalent of the euclidean case of $\frac{\partial}{\partial w_i} \sqrt{1+|w|^2}$. We have a similar result for $\fe_{\is}$. In this section, we will often use subscripts to denote the appropriate covariant derivatives of scalar functions.

Now, we make some remarks about the differential of the map $g(x): M \to TM$ given by $g(x) = (x, D\ue)$. Consider canonical coordinates $(x_i, w^j)$. We have $g(x) = (x_i, u^{\epsilon j})$, therefore $dg(\partial_l) = (\partial_l, \partial_l (u^{\epsilon j}) \partial_j) = (\partial_l, -\Gamma^j_{lk} u^{\epsilon k}\partial_j) + (D_l D \ue)_v = (\partial_l)_h + (D_lD \ue)_v.$ So, it is clear that 
\begin{equation}\label{eq:9_22_0}
dg(X) = X_h + (D_X D \ue)_v.
\end{equation}

First, we write down a simple computational lemma that will be needed.

\begin{lemma}
If $G: TM \to \mathbb R$ is a function independent of the horizontal directions on $TM$ (i.e. $\bar D_{X_h} G = 0$), then
\begin{equation}\label{eq:9_22}
D_i \bar D_{\js} G = (D^2_{ik} \ue)  \bar D^2_{k^* \js}G = (D^2_{ik}\ue)G_{\js \ks}.
\end{equation}
\end{lemma}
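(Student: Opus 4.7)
The plan is to verify the identity pointwise at an arbitrary $p\in M$ by working in geodesic normal coordinates on $M$ centered at $p$. This choice makes $\sigma_{ij}(p)=\delta_{ij}$ and $\Gamma^k_{ij}(p)=0$, so the covariant derivative of the 1-form $G_{\js}$ reduces to $D_iG_{\js}(p)=\partial_iG_{\js}(p)$, the horizontal lift $(\partial_i)_h$ at $(p,D\ue(p))$ collapses to $\partial_{x^i}$ on $TM$, and the vertical component $(D_iD\ue)_v$ becomes $D^2_{ik}\ue\cdot\partial_{w^k}$. Combining with \eqref{eq:9_22_0} gives $dg(\partial_i)|_p = \partial_{x^i} + D^2_{ik}\ue\cdot\partial_{w^k}$. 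Tensoriality of both sides of the claimed identity will then upgrade the pointwise identity to the coordinate-free statement.

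Next, apply the chain rule to $G_{\js}(x)=(\bar D G)((e_j)_v)|_{g(x)}$, extending $(e_j)_v$ as the vertical lift of a local frame $\{e_j\}$ on $M$:
\begin{equation*}
\partial_iG_{\js}(p) \;=\; \bar D^2 G\bigl(dg(\partial_i),(e_j)_v\bigr) \;+\; (\bar D G)\bigl(\bar D_{dg(\partial_i)}(e_j)_v\bigr).
\end{equation*}
The second summand vanishes: the Sasaki identities \eqref{eq:conneq2} and \eqref{eq:conneq4}, together with $D_{\partial_i}e_j(p)=0$, force $\bar D_{dg(\partial_i)}(e_j)_v$ to be purely horizontal at $(p,D\ue)$, and the hypothesis $\bar D_{X_h}G\equiv 0$ kills any horizontal derivative of $G$. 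For the first summand, split $dg(\partial_i) = (\partial_i)_h + D^2_{ik}\ue\cdot(e_k)_v$; the vertical-vertical contribution produces $D^2_{ik}\ue\cdot\bar D^2G((e_k)_v,(e_j)_v)=D^2_{ik}\ue\cdot G_{\js\ks}$, which is exactly the right-hand side of the claim.

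It then suffices to show that the remaining horizontal-vertical Hessian piece $\bar D^2G((\partial_i)_h,(e_j)_v)$ vanishes. Using the symmetry of the Hessian for the torsion-free connection $\bar D$, rewrite it as $\bar D^2G((e_j)_v,(\partial_i)_h) = (e_j)_v\bigl((\partial_i)_hG\bigr) - \bigl(\bar D_{(e_j)_v}(\partial_i)_h\bigr)G$. The first term is zero because $(\partial_i)_hG \equiv 0$ by hypothesis, and by \eqref{eq:conneq3} the second term involves a purely horizontal vector acting on $G$, which is again zero. This is the one place where the differentiated form of the hypothesis plays an essential role, and is the main (albeit mild) obstacle in the proof; the rest is bookkeeping with the Sasaki-metric connection formulas \eqref{eq:conneq1}--\eqref{eq:conneq4}.
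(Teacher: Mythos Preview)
Your proof is correct and follows essentially the same approach as the paper's: both work at a point $p$ in a geodesic frame so that the covariant derivative of $G_{\js}$ reduces to an ordinary derivative, both invoke the differential formula \eqref{eq:9_22_0} for $dg(\partial_i)$, and both use the Sasaki connection identities \eqref{eq:conneq2}--\eqref{eq:conneq4} together with the symmetry of the Hessian to eliminate the horizontal contributions via the hypothesis $\bar D_{X_h}G\equiv 0$. The only cosmetic difference is organizational: the paper first expands $dg(\partial_i)[\bar D_{e_j^v}G]$ into horizontal and vertical directional derivatives of the scalar $\bar D_{e_j^v}G$ and then rewrites each as a Hessian term, whereas you immediately apply the Leibniz rule to obtain $\bar D^2G(dg(\partial_i),(e_j)_v)+dG(\bar D_{dg(\partial_i)}(e_j)_v)$ and handle the pieces from there.
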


\begin{proof}

Let $\{e_i\}$ be a geodesic orthonormal frame on $M$ such that $De_i (p) = 0$. Consider the orthonormal frame $\{e_i^h, e_i^v\}$ on $TM$. We have that
\begin{equation}
\bar D_{\js} G= (f^* \bar D G)_j = \bar D_{e^v_j}G \circ (x,D \ue).
\end{equation}
Since we are using a geodesic frame, we have that
\begin{equation}
D_i \bar D_{\js} G(p) = D_i (\bar D_{\js} G).
\end{equation}
So, from \eqref{eq:9_22_0} and the chain rule we have
\begin{equation}\label{eq:9_25}
D_i\bar D_{\js} G(p) = \bar D_{e_i^h}(\bar D_{e^v_j} G) + (D^2_{ik}\ue)\bar D_{e_k^v}(\bar D_{e^v_j}G).
\end{equation}
From a Leibniz formula, $dG(X_h) = 0$, and (\ref{eq:conneq2}), we get that
\begin{align}
\bar D_{e_i^h}(\bar D_{e^v_j}G) & = \bar D^2_{e_i^h e^v_j}G + dG(\bar D_{e_i^h}e^v_j), \\
& = \bar D^2_{e_i^h e^v_j}G.
\end{align}
Then, we use the symmetry of the Hessian, the definition of the second covariant derivative, and \eqref{eq:conneq3} to get
\begin{equation}
= \bar D_{e_j^v}(\bar D_{e_i^h} G) = 0.
\end{equation}
Similarly, we apply (\ref{eq:conneq4}) to the second term of \eqref{eq:9_25} to get
\begin{equation}
D_i \bar D_{\js}G(p) =  (D^2_{ik}\ue)\bar D^2_{e_k^v e^v_j} G.
\end{equation}
This gives the lemma.
\end{proof}

Consider the function $h: TM \to \mathbb R$ given by $h(p,w) = |w|^2$. Now, in canonical coordinates,
\begin{align}
\bar D_{(\partial_i)_h} h & = \bar D_{(\partial_i)_h} |w|^2, \\
& = \frac{\partial}{\partial x_i} |w|^2 - \Gamma^k_{ij}w^j \frac{\partial}{\partial w^k} |w|^2, \\
& = 2\sigma(w, D_{\partial_i} w) - 2 \Gamma^k_{ij}w^j g_{kl}w^l \\
& = 0.
\end{align}
From this, we can see that $\bar D_{X_h} F = \bar D_{X_h} \fe = 0$. Finally, using canonical coordinates, it is clear that
\begin{align}
\bar D_{(\partial_i)_v} F & = \frac{\partial}{\partial w^i} \sqrt{1 + |w|^2} = \frac{g_{ij}w^j}{\sqrt{1+|w|^2}}, \\
& = \frac{\sigma(\partial_i, w)}{\sqrt{1+|w|^2}}
\end{align}
So, we see that $F_{\is} = \bar D_{\is} F = \bar D_{e_i^v} F = \frac{D_i \ue}{W}.$ Similarly, $\fe_{\is} = \frac{D_i \ue}{W} + \epsilon D_i \ue. $

Now, we follow the calculations of Oliker-Ural'tseva\cite{OU1993} for our similar tensors coming from $F$ and $\fe$. Using a geodesic frame $\{e_i\}$ on $M$ it is clear that $D_i \fe_{\is} = \frac{\triangle \ue}{W} - \frac{D^2_{ij}\ue D_i \ue D_j \ue}{W^2} + \epsilon \triangle \ue$. We immediately see that \eqref{eq:emcf} becomes
\begin{equation} \label{eq:9_19}
\frac{\ue_t}{W} - D_i \fe_{\is} = 0.
\end{equation}
Using \eqref{eq:9_22}, we note that $(F_{\js}) D_j\fe_{\is} = W_k \fe_{\ks\is}$. Also, note that $F_{\is} D_i \ue_t = W_t$. Rewriting \eqref{eq:9_19} as $\ue_t - W D_i \fe_{\is} = 0$, we apply the operator $(F_{\js}) D_j$ to get
\begin{align}
0 & = W_t - F_{\js} (D_j W) D_i \fe_{\is} - F_{\js}WD_j D_i \fe_{\is} \\
 & = W_t - F_{\js}W_j D_i \fe_{\is} + F_{\js}W \Ric_M(e_j, e_k) \fe_{k^*}- F_{\js}WD_i D_j \fe_{\is} \label{eq:9_35}
\end{align}
We then move around the derivative $D_i$ on the last term of \eqref{eq:9_35} and use \eqref{eq:9_22} to get
\begin{equation}
W_t - W D_i(D_k W \fe_{\ks\is}) + \Lambda W = W_j F_{\js}D_i\fe_{\is}-(\frac{1}{W}+\epsilon)\Ric_M(D \ue, D \ue),
\end{equation}
where $\Lambda = D^2_{ik}\ue D^2_{jl}\ue F_{\ks\js} \fe_{\l^*\is} \geq 0$.

Consider the case that $\Ric_M \geq 0$. Now, taking a cut-off function $\eta \in H^1_0(\Omega)$ and $\eta \geq 0$, we multiply by $W^{-1} \eta$, integrate by parts, use that $|W_j F_{\js}| = (1/W) |\la D\ue, D W\ra|\leq |DW|$, use inequality \eqref{eq:9_1}, and use equation \eqref{eq:9_19} to get
\begin{equation}
\int\limits_\Omega \frac{W_t}{W} \eta + W_k  \fe_{\ks\is} D_i \eta + \Lambda \eta \leq
C\int\limits_\Omega \frac{|\nabla W|}{W}\eta.
\end{equation}

We will have need for a special class of test functions. Fix a point $p_0 \in \Omega$, $t_0 \geq 0$, $\sigma \geq 0$, and $T$ such that $t_0+\sigma < T$. We use $G(\rho,\sigma)$ to denote that class of non-negative test functions $\zeta(x,t)$ defined by:
\begin{enumerate}
\item $\zeta(x,t) = \omega(x)\chi(t)$ where $ \omega \in \Lip(\Omega \cap B(\rho,x_0))$ and $\chi \in \Lip[0,T]. $
\item $\omega \equiv 1$ on $B(\rho/2, x_0)$ and $\suppt \omega \subset \bar B(\rho)$.
\item \begin{enumerate}
	\item if $\sigma > 0$ then $\chi \equiv 1$ on $[\sigma, T]$ and $\chi(0) = 0$.
	\item if $\sigma = 0$ then $\chi \equiv 1$ on $[0, T]$.
	\end{enumerate}
\end{enumerate}
\begin{figure}
\def\svgwidth{5 in}
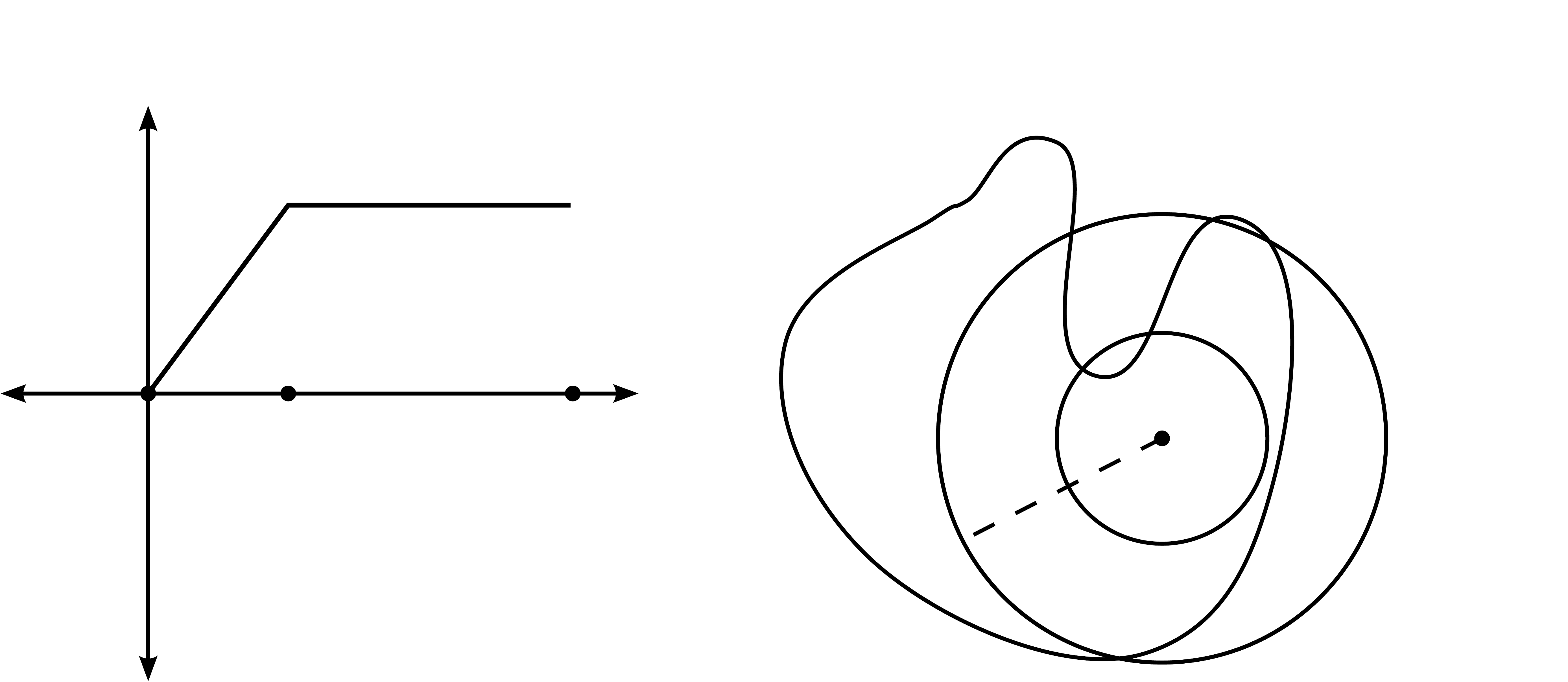
\end{figure}

For a given $\zeta \in G(\rho, \sigma)$ and for a constant $M$ we use the test function $\eta = (W-M)_+ \zeta^2$. Now, we choose another constant $M^*$ such that for $M \geq M^*$, we are guaranteed that $\eta = 0$ on $\Omega \times \{0\}$. So, we let
\begin{equation}
M^*  = \begin{cases}
\sup\limits_{\Omega\times\{0\}} (1+|Du_0|^2)^{1/2} & \sigma = 0 \\
1 & \sigma > 0.
\end{cases}
\end{equation}
Now we use the test function $\eta = (W - M)_+ \zeta^2$ for some $\zeta \in G(\rho, \sigma)$ and $M \geq M^*$. Let $A_k(t) = \{ x \in \Omega\cap B(\rho) | W(x,t) > M \}.$ We get
\begin{align}
\intAMt \frac{W_t (W-M)}{W}\zeta^2 + \fe_{\is\js}W_iW_j \zeta^2 + \Lambda (W-M)\zeta^2 \leq\\
 \intAMt C\frac{|\nabla W|}{W}(W - M) \zeta^2 - \intAMt 2 W_k \fe_{\ks\is} (W-M) \zeta \zeta_i.
\end{align}

Let $\PhiFn(w,k) = \int\limits_k^w \frac{(y-k)_+}{y} \dy$. From $\fe_{\is\js}W_iW_j = |\nabla W|^2 / W + \epsilon |DW|^2$ and $\Lambda \geq 0$, we get
\begin{align}
\intAMt \frac{\partial}{\partial t} \left[\PhiFn(W,t) \zeta^2\right]+ (|\nabla W|^2/W + \epsilon |DW|^2)\zeta^2 \leq \intAMt C\frac{|\nabla W|}{W}(W - M) \zeta^2 \\
 - \intAMt 2 W_k \fe_{\ks\is} (W-M) \zeta \zeta_i - 2\PhiFn(W,t)\zeta\zeta_t.
\end{align}

Now, we estimate $ C |\nabla W| (W-M) \leq (1/4) |\nabla W|^2 + C^2 (W-M)^2,$ and also $ |2W_k \fe_{\ks\is}(W-M)\zeta\zeta_i | \leq (1/4) \fe_{\ks\is}W_k W_i \zeta^2 + 4|D\zeta|^2 (W^{-1} + \epsilon)(W - M)^2$. Therefore,
\begin{align}\label{eq:9_26}
\intAMt \frac{\partial}{\partial t} \left[\PhiFn(W,t) \zeta^2\right]+ \frac{1}{2}(|\nabla W|^2/W + \epsilon |DW|^2)\zeta^2 \leq \intAMt C^2 \frac{(W-M)^2}{W} \zeta^2 \\
+\intAMt 4|D\zeta|^2 (W^{-1} + \epsilon)(W - M)^2+ 2\PhiFn(W,t)\zeta\zeta_t.
\end{align}

For now, let $\epsilon$ be small enough such that $\epsilon^{-1} \geq M^*$. Also, set $M_0 = \epsilon^{-1}$. We have that $(\epsilon W)^{-1} \leq (\epsilon M_0)^{-1} = 1$ on the set $A_M(t)$. Dividing \eqref{eq:9_26} by $\epsilon$, integrating over $[0, t]$, and individually estimating terms on the left hand side, we get
\begin{align}
M_0 \sup\limits_{[0,T]} \int\limits_{A_M(t)} \PhiFn(W, M) \zeta^2 + \frac{1}{2} \int\limits_{0}^T\int\limits_{A_M(t)} |DW|^2 \zeta^2 \dx \dt \\
\leq C^2 \int\limits_0^T\int\limits_{A_M(t)}(W-M)^2 \zeta^2 \dx + \int\limits_0^T\int\limits_{A_M(t)} 8 |D\zeta|^2 (W-M)^2 + 2 M_0\PhiFn(W,M) \zeta \zeta_t.
\end{align}

Therefore, setting $\PsiFn(W,M) = M \PhiFn(W,M)$, we have the following:
\begin{lemma} \label{lemma9_3}
Consider any $\eta \in G(\rho, \sigma)$. Let
\[M^* =
\begin{cases}
 \sup\limits_{\Omega\times\{0\}} (1+|Du_0|^2)^{1/2} & \sigma = 0, \\
1 & \sigma > 0.
\end{cases}
\]
Let $\epsilon_0 = 1/M^*$. For $\epsilon < \epsilon_0$, $M_0(\epsilon) = \epsilon^{-1}$, and $M \in [M_0, 2M_0]$ we have that
\begin{align}\label{eq:9_30}
\sup\limits_{[0,T]} \int\limits_{A_M(t)} \PsiFn(W, M) \zeta^2 + \int\limits_{0}^T\int\limits_{A_M(t)} |DW|^2 \zeta^2 \dx \dt \\
\leq D \int\limits_{0}^T\int\limits_{A_M(t)}(W-M)^2 (\zeta^2 +|D\zeta|^2) \dx + \PsiFn(W,M) |\zeta \zeta_t|.
\end{align}
Here $D = D(u_0, \phi, \Omega)$.
\end{lemma}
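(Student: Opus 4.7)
My plan is to extract the lemma directly from inequality (9.26) that has just been established above. The substantive work---using the differential inequality for $W$, multiplying by the Caccioppoli test function $\eta = (W-M)_+ \zeta^2$, and estimating cross terms by Cauchy--Schwarz to separate the good dissipation $\tfrac12(|\nabla W|^2/W + \epsilon|DW|^2)$ from the lower order terms---is already done. What remains is to divide (9.26) by $\epsilon$, integrate in time, and recast the result in terms of $\PsiFn$ with the correct constants.

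First I would divide (9.26) through by $\epsilon$ and integrate from $0$ to an arbitrary $t \leq T$. On the left, the term $\tfrac12 \epsilon|DW|^2 \zeta^2 / \epsilon$ contributes $\tfrac12 |DW|^2 \zeta^2$, and the $(|\nabla W|^2/W)/\epsilon$ piece is non-negative and dropped. Integrating the time derivative $\partial_t[\PhiFn(W,M)\zeta^2]/\epsilon$ produces $M_0 \int_{A_M(t)} \PhiFn(W,M)\zeta^2$ at the endpoint, from which I can take a supremum over $t \in [0,T]$. The initial contribution at $t=0$ vanishes by design: if $\sigma>0$ then $\chi(0)=0$, while if $\sigma=0$ then $W(\cdot,0) \leq M^* \leq M_0 \leq M$, so $\PhiFn(W(\cdot,0),M) \equiv 0$ on $\suppt \omega$.

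Next I would estimate the right-hand side of (9.26)$/\epsilon$. The crucial observation, already highlighted in the text, is that on $A_M(t)$ we have $\epsilon W \geq \epsilon M \geq \epsilon M_0 = 1$, so $(\epsilon W)^{-1} \leq 1$ and hence $(W^{-1} + \epsilon)/\epsilon \leq 2$. Thus $(C^2/\epsilon)(W-M)^2 \zeta^2 / W \leq C^2 (W-M)^2 \zeta^2$, and the term $(4/\epsilon)|D\zeta|^2 (W^{-1} + \epsilon)(W-M)^2 \leq 8|D\zeta|^2(W-M)^2$, which together give the $(W-M)^2(\zeta^2 + |D\zeta|^2)$ term on the right with absolute constants.

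Finally I would exploit $M \in [M_0, 2M_0]$ to convert between the two natural weights: since $\PsiFn(W,M) = M\PhiFn(W,M)$, we have $M_0 \PhiFn = (M_0/M)\PsiFn \in [\PsiFn/2,\PsiFn]$. This turns $M_0 \sup_{[0,T]} \int_{A_M(t)} \PhiFn \zeta^2$ on the left into at least $\tfrac12 \sup_{[0,T]} \int_{A_M(t)} \PsiFn \zeta^2$, while the remaining right-hand term $(2/\epsilon)\PhiFn|\zeta\zeta_t| = 2M_0\PhiFn|\zeta\zeta_t| \leq 2\PsiFn|\zeta\zeta_t|$. Collecting all numerical constants into a single $D = D(u_0,\phi,\Omega)$ yields the claimed inequality. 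There is no genuine obstacle here; the lemma is essentially a bookkeeping exercise extracting the correct $\epsilon$-uniform energy estimate from (9.26), put into a form suitable for the Ladyzhenskaya--Ural'tseva iteration to follow.
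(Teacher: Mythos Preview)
Your proposal is correct and follows essentially the same route as the paper: divide \eqref{eq:9_26} by $\epsilon$, integrate in time, use $(\epsilon W)^{-1}\leq 1$ on $A_M(t)$ to control the right-hand side uniformly in $\epsilon$, and then rewrite $M_0\PhiFn$ as $\PsiFn$ up to a harmless factor coming from $M\in[M_0,2M_0]$. You even spell out explicitly why the $t=0$ contribution vanishes in both the $\sigma>0$ and $\sigma=0$ cases, which the paper leaves implicit.
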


Furthermore, by estimating $\sqrt{1+r^2} \leq 1 + (1/2) r^2$, we find that
\begin{equation}
\| (W - M_0)_+ \|_{2, \tilde Q} \leq [C_0 T\epsilon]^{1/2} M_0.
\end{equation}
Therefore, given any $\Theta$, we may find $\epsilon_0 = \epsilon_0(T, \Theta, C_0)$ such that
\begin{equation}\label{eq:9_33}
\| (W - M_0)_+ \|_{2, \tilde Q} \leq \Theta M_0.
\end{equation}

We would like to apply something similar to \cite[Lemma 4.1]{OU1993}, which gives pointwise bounds for functions $W$ that satisfy integration bounds of the form \ref{lemma9_3}.  Oliker-Ural'tseva \cite{OU1993} derive so using an integration iteration estimate and a Sobolev Inequality of the form $\int_\Omega g^2 \dx \leq \beta |\suppt g|^{2/n} \int_\Omega |Dg|^2 \dx.$ Unfortunately, in the setting of $M\times\mathbb R$, for $\Omega' \subset\subset \Omega$, we need to cover $\Omega'$ by geodesic balls $B$ of small enough size such that the in local coordinates $\sigma_{ij} = \delta_{ij} + O(r^2)$ and $|\sigma_{ij,k}| = O(r)$. By choosing the geodesic balls small enough to get uniform estimates of this nature, we may apply the euclidean Sobolev inequality on each geodesic neighborhood. Since $\Omega'$ is compact we can recover the result of Oliker-Ural'tseva\cite{OU1993}, but the dependencies of the constants are a little weaker. Using this, \eqref{eq:9_33}, Lemma \ref{lemma9_3}, and \cite[Lemma 4.1]{OU1993}, we have:

\begin{lemma}
Let $\Ric M \geq 0.$ Let $\Omega' \subset\subset \Omega$ and $T > 0$, $0 \leq \sigma < T$. There exists
\begin{equation}
\epsilon_0 = \begin{cases}
\epsilon_0(T,\sigma, \Omega', u_0, \phi) & \sigma > 0, \\
\epsilon_0(T,\max\limits_\Omega |Du_0|, \Omega', u_0, \phi) & \sigma = 0,
\end{cases}
\end{equation}
such that for $\epsilon \leq \epsilon_0$ and in $\Omega' \times [\sigma, T]$ we have
\begin{equation}
W\epsilon \leq C
\end{equation}
where \begin{equation}
C = \begin{cases}
C(T,\sigma, \Omega', u_0, \phi) & \sigma > 0 \\
C(T,\max\limits_\Omega |Du_0|, \Omega', u_0, \phi) & \sigma = 0.
\end{cases}
\end{equation}
\end{lemma}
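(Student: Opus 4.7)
My plan is to apply a De Giorgi--Moser type iteration on the super-level sets $A_M(t) = \{W > M\}$ starting from the energy inequality in Lemma~\ref{lemma9_3}, together with a geometric Sobolev inequality, in order to pass from the $L^2$ smallness \eqref{eq:9_33} to a pointwise bound of the form $W \leq 2M_0$ on $\Omega' \times [\sigma, T]$. Since $M_0 = \epsilon^{-1}$, this immediately yields $\epsilon W \leq 2$, which is the desired estimate up to renaming the constant.

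First I would address the geometric difficulty: on a general manifold there is no global Euclidean Sobolev inequality of the form needed to run the iteration. Fix $x_0 \in \Omega'$ and choose a uniform radius $r_0 > 0$ (possible by compactness of $\bar{\Omega'}$) so that geodesic normal coordinates on $B(r_0, x_0)$ satisfy $\sigma_{ij} = \delta_{ij} + O(r^2)$ and $|\sigma_{ij,k}| = O(r)$. On each such ball the Euclidean Sobolev inequality $\int g^2 \leq \beta\, |\supp g|^{2/n} \int |Dg|^2$ holds for compactly supported $g$, with a constant $\beta$ that is uniform across $\Omega'$. Covering $\Omega'$ by finitely many such balls will allow us to conclude the global estimate from local ones at the end.

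Next I would run the iteration itself. Fix $x_0 \in \Omega'$ with $B(r_0, x_0) \subset \Omega$, and choose the standard dyadic sequences $\rho_k = \rho/2 + \rho\, 2^{-(k+1)}$ and $M_k = M_0(2 - 2^{-k})$, together with cutoff functions $\zeta_k \in G(\rho_k, \sigma_k)$ (with $\sigma_k$ interpolating between $0$ and $\sigma$, with the obvious modification when $\sigma = 0$). Inserting $\zeta = \zeta_k$ and $M = M_k$ into Lemma~\ref{lemma9_3} controls the sum
\begin{equation}
\sup_{[0,T]} \intAMt \PsiFn(W,M_k)\zeta_k^2 + \int_0^T\intAMt |DW|^2 \zeta_k^2 \dx \dt
\end{equation}
by a weighted $L^2$ norm of $(W - M_k)_+$ on the slightly larger cylinder. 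Applying the local Sobolev inequality to $(W - M_k)_+ \zeta_k$ then produces, via the standard parabolic interpolation between $L^\infty_t L^2_x$ and $L^2_t H^1_x$, a recursion of the form
\begin{equation}
Y_{k+1} \leq C\, b^k\, Y_k^{1+\kappa}
\end{equation}
for some $\kappa > 0$ and $b > 1$, where $Y_k = \|(W-M_k)_+\|_{L^2}^2$ on $B(\rho_k, x_0) \times [\sigma_k, T]$. By \cite[Lemma 4.1]{OU1993}, if $Y_0$ is smaller than a universal threshold depending only on $C, b, \kappa$, then $Y_k \to 0$, which forces $W \leq 2M_0$ on $B(\rho/2, x_0) \times [\sigma, T]$.

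The main obstacle, and the reason for the bifurcated dependency in the statement of $\epsilon_0$ and $C$, is ensuring the starting quantity $Y_0$ is small enough. This is precisely what \eqref{eq:9_33} delivers: for any prescribed $\Theta$ we may choose $\epsilon_0$ small so that $\|(W-M_0)_+\|_{2,\tilde Q} \leq \Theta M_0$. When $\sigma > 0$, the cutoff vanishes at $t=0$ and $M^* = 1$ suffices, so $\epsilon_0$ depends only on $T,\sigma,\Omega',u_0,\phi$; when $\sigma = 0$, one must instead take $M^* = \sup(1+|Du_0|^2)^{1/2}$, introducing the dependence on $\max_\Omega |Du_0|$. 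The hypothesis $\Ric_M \geq 0$ is what allowed the curvature term in the equation for $W$ to be dropped when deriving the energy inequality above Lemma~\ref{lemma9_3}; without it the iteration would acquire an uncontrolled zeroth-order driver. Finally, covering $\Omega'$ by finitely many balls $B(\rho/2, x_j)$ and taking the maximum of the resulting local bounds gives the claimed estimate on all of $\Omega' \times [\sigma, T]$.
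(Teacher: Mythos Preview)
Your proposal is correct and follows essentially the same approach as the paper: the paper's argument (given in the paragraph preceding the lemma rather than as a separate proof) combines the energy inequality of Lemma~\ref{lemma9_3}, the smallness estimate \eqref{eq:9_33}, and the iteration lemma \cite[Lemma 4.1]{OU1993}, handling the manifold setting by covering $\Omega'$ with small geodesic balls on which the Euclidean Sobolev inequality applies with uniform constants. Your write-up in fact supplies more detail on the iteration mechanics and the role of the bifurcated $M^*$ than the paper does.
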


Following the approach of Oliker-Ural'tseva\cite{OU1993} and using the estimate $\epsilon |D\ue| \leq C$, it is now possible to obtain a Sobolev inequality of surfaces for functions of compact support on the graph of $\ue$. The facts that $\frac{\ue_t}{W} = D_i \fe_{\is}$, $|\fe_{\is} | \leq 1 + C$, and $|\frac{u_t}{W}| \leq C$ allow us, on small geodesic balls $B$, to apply the Sobolev Inequality of Ladyzhenskaya-Ural'tseva\cite{LU} to get
\begin{equation}
\int\limits_\Se f^2 \, d H_n \leq \beta H_n^{2/n}(\suppt f \cap \Se) \int\limits_\Se |\nabla f|^2 \, d H_n
\end{equation}
where $f \in C^1_0(B)$ and $\beta = \beta(C)$. The proof of Theorem 2.5 in Oliker-Ural'tseva\cite{OU1993} may now be carried out on small geodesic balls to obtain

\begin{theorem}
Let $\Omega' \subset\subset \Omega$ and $T> 0$. There exists a constant $C = C(\Omega', T, u_0, \phi)$ such that
\begin{equation}
\sup\limits_{\Omega'\times [0,T]} |D\ue| \leq C
\end{equation}
for $\epsilon < \epsilon_0 = \epsilon(\Omega', T, u_0, \phi)$.
\end{theorem}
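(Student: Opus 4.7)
The plan is to iterate on the level sets $A_M(t) = \{x \in \Omega \cap B(\rho) : W(x,t) > M\}$ following the scheme of \cite[Theorem 2.5]{OU1993}, but with the graph Sobolev inequality applied in local geodesic charts rather than globally on $\mathbb R^n$. Because the hard work has been done in establishing the previous $\epsilon W \leq C$ estimate and in deriving the integral inequality of Lemma \ref{lemma9_3}, the remaining task is essentially a De Giorgi style iteration that converts these integral bounds into a pointwise bound.

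First I would fix a finite cover of $\Omega'$ by small geodesic balls $B_r(p_k) \subset\subset \Omega$ chosen so that in normal coordinates $\sigma_{ij} = \delta_{ij} + O(r^2)$ and so that the graph Sobolev inequality displayed just above the theorem statement holds with a uniform constant $\beta = \beta(C,\Omega,n)$ for every $f \in C^1_0(B_r(p_k))$. Since $|\fe_{\is}| \leq 1 + C$ and $|\ue_t/W| \leq C$ hold uniformly in $\epsilon$, the assumptions of the Ladyzhenskaya--Ural'tseva Sobolev inequality are verified with constants depending only on the previously derived a priori bounds. It suffices to prove the theorem on each such ball $B_r(p_k) \times [\sigma/2, T]$; a standard overlap argument then yields the result on $\Omega'\times [\sigma, T]$, and the case $\sigma = 0$ follows by also using the boundary-time part of $G(\rho,\sigma)$ with $M^* = \sup(1 + |Du_0|^2)^{1/2}$.

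Next, for a cut-off $\zeta \in G(\rho,\sigma)$ and levels $M \geq M_0 = \epsilon^{-1}$, apply the graph Sobolev inequality to $f = (W-M)_+ \zeta$ viewed as a function on $\Se$ with support in the image of $B_r(p_k)$. Combined with Lemma \ref{lemma9_3}, this upgrades the $L^2$ bound on $(W-M)_+\zeta$ to a bound in a higher Lebesgue exponent, with the gain governed by the Sobolev conjugate. Iterating along a geometric sequence of levels $M_j = M_0(2 - 2^{-j})$ and a geometric sequence of shrinking radii and initial times $(\rho_j, \sigma_j)$, one obtains a recursion of the form $Y_{j+1} \leq C^j Y_j^{1+\kappa}$ for some $\kappa > 0$ on the quantities $Y_j = \int\int_{A_{M_j}(t)}(W - M_j)^2$. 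Provided the initial $Y_0$ is small enough, this recursion forces $Y_j \to 0$, hence $W \leq 2M_0$ on the shrunken domain. The smallness of $Y_0$ is supplied exactly by \eqref{eq:9_33}, where one now chooses $\Theta = \Theta(\Omega', T, u_0, \phi)$ sufficiently small and then $\epsilon_0 = \epsilon_0(\Theta)$.

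The main obstacle will be verifying that the iteration constants, and in particular the Sobolev constant $\beta$, can be kept uniform in $\epsilon$; this rests crucially on the fact that the $\epsilon$-dependent terms $\epsilon|D\ue|$ in $\fe_{\is}$ are controlled by the estimate $\epsilon W \leq C$ of the previous lemma, so that the coefficients of the quasilinear elliptic operator governing the graph satisfy the structural hypotheses of \cite{LU} with constants depending only on $(\Omega', T, u_0, \phi)$. The use of $\Ric_M \geq 0$ has already absorbed the only curvature term of a bad sign in the Bochner-type identity for $W$, so no additional geometric difficulty arises in the iteration itself. Once $W \leq 2M_0 = 2\epsilon^{-1}$ is bootstrapped down to $W \leq C(\Omega', T, u_0, \phi)$ via the iteration, the theorem follows.
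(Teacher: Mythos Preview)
Your overall strategy---localize to small geodesic balls, invoke the Ladyzhenskaya--Ural'tseva graph Sobolev inequality with constants depending only on the a priori bound $\epsilon W\le C$, and run a De~Giorgi iteration \`a la \cite[Theorem 2.5]{OU1993}---is exactly what the paper does. However, the iteration you actually describe does not prove this theorem; it reproves the \emph{previous} lemma.

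You invoke Lemma~\ref{lemma9_3} with levels $M\ge M_0=\epsilon^{-1}$ and iterate along $M_j=M_0(2-2^{-j})$; the recursion then forces $W\le 2M_0=2\epsilon^{-1}$, which is precisely the bound $\epsilon W\le C$ already in hand. Your last sentence (``bootstrapped down to $W\le C(\Omega',T,u_0,\phi)$ via the iteration'') asserts the missing step without supplying it. Lemma~\ref{lemma9_3} cannot do this for you: its very derivation required dividing \eqref{eq:9_26} by $\epsilon$ and using $(\epsilon W)^{-1}\le 1$ on $A_M(t)$, which forces the level restriction $M\ge \epsilon^{-1}$ and caps the conclusion at $W\le 2\epsilon^{-1}$. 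Likewise, the smallness input \eqref{eq:9_33} is a statement about $(W-M_0)_+$ with $M_0=\epsilon^{-1}$; it seeds the first-stage iteration, not the second.

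For the $\epsilon$-independent bound you must return to the undivided inequality \eqref{eq:9_26}. Now that $\epsilon W\le C$ is known, the bad term $\epsilon(W-M)^2$ is dominated by $C(W-M)^2/W$, so all terms carry a factor $W^{-1}$ and the inequality becomes an estimate in the \emph{surface} measure $dH_n=W\,dx$ on $S^\epsilon(t)$. This is where the graph Sobolev inequality enters: it controls $\int_{S^\epsilon(t)} f^2\,dH_n$ by $\int_{S^\epsilon(t)}|\nabla f|^2\,dH_n$ with $f=(W-M)_+\zeta$, and now the iteration can start from a \emph{fixed} level $M\ge M^*$ (independent of $\epsilon$). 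The initial smallness comes from the uniform bound \eqref{eq:intdubound} on $\int_\Omega W\,dx$, which controls the surface measure of the support, rather than from \eqref{eq:9_33}. Running De~Giorgi from a fixed $M$ then yields $W\le C$ with $C$ independent of $\epsilon$, which is the content of the theorem.
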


From here, by standard parabolic theory, we get estimates on the other spatial derivatives of $\ue$.

\begin{corollary}
Let $\Omega' \subset\subset \Omega$ and $T > 0$. There exists $\epsilon_0 = \epsilon_0(\Omega', T, u_0, \phi)$ such that for every $\epsilon < \epsilon_0$ we have
\begin{equation}
\sup\limits_{\Omega'\times[0,T]} |D^\alpha \ue | \leq C
\end{equation}
where $C = C(\Omega', T, u_0, \phi, \alpha)$.
\end{corollary}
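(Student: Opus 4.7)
The plan is to convert the uniform $C^1$ estimate from the preceding theorem into uniform higher-order estimates by viewing derivatives of $\ue$ as solutions of uniformly parabolic linear equations, then bootstrapping with Schauder theory. Pick $\Omega''$ with $\Omega' \subset\subset \Omega'' \subset\subset \Omega$ and cover $\Omega''$ by finitely many small coordinate charts in which the Christoffel symbols and their derivatives are uniformly bounded. In each chart, \eqref{eq:emcf} becomes a quasilinear equation
\begin{equation}
\ue_t = a^{ij}(x, D\ue)\, D^2_{ij} \ue + b(x, D\ue),
\end{equation}
where $a^{ij}(x,p) = \sigma^{ij}(x)\bigl(1 - \epsilon\sqrt{1+|p|^2_\sigma}\bigr) - p^i p^j/(1+|p|^2_\sigma)$ and $b$ is smooth in its arguments. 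By the preceding theorem, $|D\ue| \leq C_1$ uniformly in $\epsilon$ on $\Omega''\times[0,T]$. Because $p^ip^j/(1+|p|^2_\sigma)$ has eigenvalues strictly less than $1$ on $\{|p|\leq C_1\}$, the non-$\epsilon$ part of $a^{ij}$ is uniformly elliptic with constants independent of $\epsilon$, and the $\epsilon$-term $-\epsilon\sqrt{1+|p|^2_\sigma}\,\sigma^{ij}$ only improves ellipticity. Thus the equation is uniformly parabolic with bounds independent of $\epsilon$.

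Since the coefficients $a^{ij}(x, D\ue)$ and $b(x, D\ue)$ depend only on $x$ and on $D\ue$, which is uniformly bounded, I next apply the interior Hölder estimate for quasilinear parabolic equations (as in Ladyzhenskaya--Solonnikov--Ural'tseva, or Lieberman) to obtain a uniform $C^{1+\alpha,(1+\alpha)/2}$ estimate for $\ue$ on $\Omega' \times [0,T]$, with constants independent of $\epsilon$. Since $u_0 \in C^\infty$, the estimate extends up to $t = 0$.

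Next I differentiate the equation. Writing $v = D_k \ue$, one gets a linear parabolic equation
\begin{equation}
v_t = a^{ij}(x, D\ue)\, D^2_{ij} v + \tilde b^{\,i} D_i v + f,
\end{equation}
whose coefficients $a^{ij}, \tilde b^{\,i}, f$ are polynomial expressions in $D\ue$, $D^2\ue$, the metric, and the Christoffel symbols, each $C^\alpha$ with norms controlled uniformly in $\epsilon$ by the previous step. Interior parabolic Schauder estimates (with the initial data $u_0 \in C^\infty$ playing the role of parabolic boundary data at $t = 0$) then give a uniform $C^{2+\alpha, 1+\alpha/2}$ bound on a slightly smaller subdomain. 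Iterating: each further differentiation of \eqref{eq:emcf} yields a linear parabolic equation whose coefficients are one Hölder-order smoother than before, and a further application of Schauder theory produces the next order of estimate. After finitely many shrinkings of the domain, any $D^\alpha \ue$ is bounded uniformly in $\epsilon$ on $\Omega' \times [0,T]$.

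The main obstacle is keeping all constants uniform in $\epsilon$. Degeneration could only come from two places: the ellipticity constant might shrink as $\epsilon \to 0$, but the analysis above shows the non-$\epsilon$ principal part is already uniformly elliptic on the set $\{|D\ue| \leq C_1\}$, so this does not occur; and the estimates must include $t = 0$, which requires the Schauder/Hölder theorems in a form admitting smooth (non-zero) initial data, which is available since $u_0 \in C^\infty(\bar\Omega)$. Once these points are checked, the bootstrap goes through without difficulty.
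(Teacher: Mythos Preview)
Your bootstrap via quasilinear H\"older estimates followed by Schauder iteration is exactly the ``standard parabolic theory'' the paper invokes in a single sentence without further detail, so the approaches coincide. One small correction: with the sign as displayed in \eqref{eq:emcf}, the term $-\epsilon\sqrt{1+|p|^2_\sigma}\,\sigma^{ij}$ \emph{weakens} rather than improves ellipticity; uniform parabolicity still holds because $\epsilon W \leq \epsilon\sqrt{1+C_1^2}$ can be made as small as you like by shrinking $\epsilon_0$, so the conclusion is unaffected.
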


From these estimates we see that there is a sequence $\epsilon_i \to 0$ such that on compact sets $\Omega' \subset\subset\Omega$ we have that $\ue$ and its derivatives converge uniformly to a function $u$ and its derivatives.

From the estimates, we have that there exists a sequence $t_i \to \infty$ such $u(\cdot, t_i)$ and its derivatives converge uniformly on compact subset $\Omega' \subset\subset \Omega$ to a function $\bar u_1$. Consider another such $s_i$ with limit $\bar u_2$. We show that $\bar u_1= \bar u_2$. Note, Oliker-Ural'tseva\cite{OU1997} show this for $\Omega \subset \mathbb R^n$ by working directly with the function $u$, using that minimizers of the functional $J$ differ by a constant, and the part of the boundary that is mean convex still has suitable barriers so we do get uniqueness. For the case of general $M$, we may have that the boundary has no mean convex part. For example, consider a large enough ball in a sphere $S^n$.

We show that there is uniqueness depending on the choice of the sequence $\epsilon_i$, but not depending on the choice of sequence in time $t_i$. To do this, we use the uniqueness of the limit for $u^\epsilon$ that comes from the uniqueness of minimizers for a perturbed functional. First, a discussion of this uniqueness for $\ue$.

Similar to Oliker-Ural'tseva\cite{OU1997}, we have that $\ue$ satisfies an estimate uniform in $\epsilon$:
\begin{equation}\label{eq:9_41}
\int\limits_0^\infty\int\limits_\Omega \frac{|\ue_t|^2}{\sqrt{1+|D\ue|^2}} \dx \dt \leq C,
\end{equation}
where $C = C(u_0, \Omega, \phi)$.

Consider the functional
\begin{equation}
E^{\epsilon,t}(w) = \int\limits_\Omega \fe(x,Dw) + f^{\epsilon,t} w \dx
\end{equation}
where $ f^{\epsilon,t} = \frac{\ue_t(t)}{\sqrt{1+|D\ue(t)|^2}}$. Note, by the convexity of $E^{\epsilon, t}$, we have that $\ue(\cdot, t)$ is a minimizer of $E^{\epsilon,t}$ for functions $w \in W^{1,2}(\Omega)$ with $w = \phi$ on $\partial \Omega$. Note, also that from \eqref{eq:9_41}, we have for some sequence $t_k$ that $\int\limits_\Omega |f^{\epsilon,t_k}|^2 \dx \to 0$.

Since the function $\ue$ satisfies a uniformly parabolic equation, we know that its derivatives are bounded uniformly on $\Omega \times (0,\infty)$ (but of course the bound may depend on $\epsilon$). Therefore, for any sequence $t_i$, we may pass to a subsequence such that $\ue(\cdot, t_i) \to \bar u^\epsilon (\cdot, t_i)$ uniformly on $\Omega$. Note, also that from \eqref{eq:9_41}, we have for some sequence $t_k$ that $\int\limits_\Omega |f^{\epsilon,t_k}|^2 \dx \to 0$.

Now consider the convex functional
\begin{equation}
E^\epsilon (w) = \int\limits_\Omega \fe(x, Dw) \dx.
\end{equation}
First, we again pass to a subsequence such that $D\ue(\cdot, t_k) \to D\bar u^\epsilon$ uniformly. Since we have uniform convergence, for any $w \in W^{1,2}(\Omega)\cap L^\infty(\Omega)$ with $w = \phi$ on $\partial\Omega$ we have that
\begin{equation} \label{eq:9_64}
E^\epsilon(\bar u^\epsilon) = \lim\limits_{k\to\infty} E^\epsilon(\ue(\cdot, t_k)) = \lim\limits_{k\to\infty} E^{\epsilon, t_k}(\ue(\cdot, t_k)) - \int\limits_\Omega f^{\epsilon,t_k}\ue
\end{equation}
\begin{equation}
 = \lim\limits_{k\to\infty} E^{\epsilon, t_k}(\ue(\cdot, t_k)) \leq \lim\limits_{k\to\infty} E^{\epsilon,t_k}(w) \to E^\epsilon(w).
\end{equation}

Therefore, the limit $\bar u^\epsilon$ is a minimizer of $E^\epsilon$ which implies it is unique. Now, we may apply this to discuss the uniqueness of $\bar u$.

\begin{theorem}
Let $\Ric_M \geq 0$ and let $\Omega \subset M$ be a bounded domain. There exists a sequence $\epsilon_i \to 0$ such that the solutions $u^{\epsilon_i}$ of \eqref{eq:emcf} converges uniformly to a function $u$ in $C^\infty(K)$ for compact $K \subset\subset \Omega\times (0,\infty)$. We have that $u\in C^\infty(\Omega\times (0,\infty)) \cup L^\infty([0,\infty); W^{1,1}(\Omega)).$

Consider any sequence $t_i$ such that $u(\cdot, t_i)$ converges uniformly to a function $\bar u$ in $C^\infty(K)$ for compact $K \subset\subset \Omega$. We have that $\bar u$ is a generalized solution to the Dirichlet problem with boundary data $\phi(x)$. Furthermore, $\bar u$ depends only on the choice of sequence $\epsilon_i$; $\bar u$ is independent of the choice of $t_i$.
\end{theorem}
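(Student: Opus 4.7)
The plan is to assemble the uniform-in-$\epsilon$ estimates established throughout Section~\ref{mcf} and then perform two successive extractions: first sending $\epsilon\to 0$ to obtain $u$, and then sending $t\to\infty$ to obtain $\bar u$. First, combining the uniform $L^\infty$ bounds on $\ue$ and $\ue_t$, the uniform BV estimate \eqref{eq:intdubound}, the gradient bound of the preceding theorem, and the corollary giving bounds on all spatial derivatives of $\ue$ on compact subsets, standard parabolic regularity promotes these to uniform $C^k$ bounds on every $\overline{\Omega'}\times[\sigma,T]$ with $\Omega'\subset\subset\Omega$ and $0<\sigma<T$. Arzel\`a--Ascoli with a diagonal exhaustion produces a subsequence $\epsilon_i\to 0$ such that $u^{\epsilon_i}\to u$ in $C^\infty$ on every compact subset of $\Omega\times(0,\infty)$. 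Passing to the limit in \eqref{eq:emcf} gives that $u$ solves \eqref{eq:mcf}, and the uniform BV bound together with Lemma~\ref{fbvl1} yields $\sup_{t\ge 0}\int_\Omega\sqrt{1+|Du(\cdot,t)|^2}\le C$, so $u\in L^\infty([0,\infty);W^{1,1}(\Omega))\cap C^\infty(\Omega\times(0,\infty))$.

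Second, for any sequence $t_i\to\infty$, the uniform-in-$t$ spatial derivative bounds on $u$ (inherited from the $\epsilon$-uniform estimates, which hold for $t\ge\sigma$ with constants independent of $t$) again allow Arzel\`a--Ascoli to extract a subsequence such that $u(\cdot,t_i)\to\bar u$ in $C^\infty$ on compact subsets of $\Omega$, with $\bar u\in BV(\Omega)$. To see that $\bar u$ is a generalized solution of the Dirichlet problem, one uses the $\epsilon$-uniform energy estimate \eqref{eq:9_41}, which by Fatou passes to $\int_0^\infty\int_\Omega u_t^2/W<\infty$. Hence along a further subsequence $u_t(\cdot,t_i)\to 0$ in $L^2_{\loc}$, which combined with $u_t=WH$ gives $H(\bar u)=0$ in $\Omega$, so $\bar u$ satisfies the minimal surface equation in the interior. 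To upgrade this to $\bar u$ being a minimizer of $J$, use the minimizer characterization of $\bar u^\epsilon$ for $E^\epsilon$ established in the preceding paragraphs: for any competitor $w\in W^{1,2}(\Omega)$ with $w=\phi$ on $\partial\Omega$ one has $E^{\epsilon_i}(\bar u^{\epsilon_i})\le E^{\epsilon_i}(w)\to \mathcal A(w,\Omega)$; combining with lower semicontinuity of $\mathcal A$ (Lemma~\ref{fbvl1}) applied to a subsequential $L^1$ limit $v^*$ of $\bar u^{\epsilon_i}$ and invoking Theorem~\ref{fbvt4} shows $v^*$ minimizes $J(\cdot,\Omega)$ over $BV(\Omega)$.

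The main obstacle is the last point: independence of $\bar u$ from the choice of $t_i$. The plan is to identify $\bar u$ with such a subsequential $L^1$ limit $v^*$ of the $\bar u^{\epsilon_i}$. For each fixed $i$, since $\bar u^{\epsilon_i}$ is the unique minimizer of the strictly convex functional $E^{\epsilon_i}$, every subsequential time-limit of $u^{\epsilon_i}(\cdot,t)$ equals $\bar u^{\epsilon_i}$; thus $u^{\epsilon_i}(\cdot,t)\to\bar u^{\epsilon_i}$ in $L^1(\Omega)$ as $t\to\infty$. The remaining delicate task is to swap the two limits $i\to\infty$ and $t\to\infty$. The plan is a diagonal argument: choose $T_i$ so large that $\|u^{\epsilon_i}(\cdot,T_i)-\bar u^{\epsilon_i}\|_{L^1}<1/i$, and on the other side use that $u^{\epsilon_i}(\cdot,t)\to u(\cdot,t)$ in $C^\infty$ on compact sets, with the Lipschitz-in-$t$ bound $|u^\epsilon_t|\le C$ giving equicontinuity of the family $\{u^\epsilon(\cdot,t)\}$ in $t$, and the integrability $\int_0^\infty\int_\Omega|u^\epsilon_t|^2/W\le C$ (uniform in $\epsilon$) ensuring that $u(\cdot,t_j)$ is close to any $u(\cdot,t)$ for $t$ large along a suitable subsequence. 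Carrying out the diagonal carefully produces $(i_k,t_{j_k})$ with $i_k\to\infty$ and $t_{j_k}\to\infty$ along which $u^{\epsilon_{i_k}}(\cdot,t_{j_k})$ converges simultaneously to $\bar u$ (via $u$) and to $v^*$ (via $\bar u^{\epsilon_i}$), forcing $\bar u=v^*$. Since $v^*$ is determined entirely by the sequence $\epsilon_i$, the uniqueness of $\bar u$ follows.
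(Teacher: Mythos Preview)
Your construction of $u$ via Arzel\`a--Ascoli and the diagonal extraction, together with the passage to $\bar u$ along time-sequences, tracks the paper's set-up. The real divergence from the paper is in the uniqueness argument, and there your proposal both takes a longer route and leaves a genuine gap.

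The paper does not introduce any auxiliary limit $v^*$ of the $\bar u^{\epsilon_i}$, does not argue that $v^*$ minimizes $J$, and does not attempt a diagonal over $(i_k,t_{j_k})$. Instead it compares two time-limits $\bar u$ and $\bar v$ directly at a point: fix $x\in\Omega$ and $\delta>0$, pick $i$ large and then some $\epsilon_j$ from the given sequence so that
\[
|\bar u(x)-\bar v(x)|\le 2\delta+|u(x,t_i)-u(x,s_i)|\le 4\delta+|u^{\epsilon_j}(x,t_i)-u^{\epsilon_j}(x,s_i)|,
\]
and then uses that for each \emph{fixed} $\epsilon_j$ the time-limit of $u^{\epsilon_j}$ is the unique minimizer $\bar u^{\epsilon_j}$ of the strictly convex $E^{\epsilon_j}$, so along subsequences $u^{\epsilon_j}(\cdot,t_i)$ and $u^{\epsilon_j}(\cdot,s_i)$ have the same limit and the last term is driven to zero. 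This is the whole argument; the identification of $\bar u$ as a generalized solution is relegated to the discussion preceding the theorem rather than made to depend on $\bar u=v^*$.

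Your diagonal plan for $\bar u=v^*$ does not close as stated. The convergence $u^{\epsilon_i}\to u$ is only available on compact subsets of $\Omega\times(0,\infty)$ (the derivative bounds in the corollary depend on $T$), so once $t_{j_k}\to\infty$ you cannot control $|u^{\epsilon_{i_k}}(x,t_{j_k})-u(x,t_{j_k})|$ by taking $i_k$ large; conversely, fixing $i_k$ first and then pushing $t_{j_k}$ beyond your $T_{i_k}$ destroys the approximation of $u$ by $u^{\epsilon_{i_k}}$ at time $t_{j_k}$. Neither the uniform bound $|u^\epsilon_t|\le C$ nor the energy integral $\int_0^\infty\!\int_\Omega u_t^2/W\le C$ breaks this circularity on its own. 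The paper's pointwise scheme avoids this detour entirely: it never needs $u^{\epsilon_i}$ to approximate $u$ uniformly in $t$, only at a single pair $(t_i,s_i)$, and it exploits the full-sequence convergence $u^{\epsilon_j}(\cdot,t)\to\bar u^{\epsilon_j}$ (for $j$ fixed) rather than trying to pass through a further limit in $j$.
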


\begin{proof}
The only thing left to discuss is uniqueness. Consider another sequence $s_i$ such that $\ue(\cdot,s_i) \to \bar v^\epsilon$ and their spatial derivatives converge uniformly on compact subsets $\Omega' \subset\subset\Omega$. For any $x \in \Omega$ and $\delta > 0$, we have for some $\epsilon_j$ that

\begin{equation}
|\bar u(x) - \bar v(x)| \leq 2\delta + |u(x,t_i) - u(x,s_i)| \leq 4\delta + |u^{\epsilon_j}(x,t_i) - u^{\epsilon_j}(x,s_i)|.
\end{equation}
We may pass to a subsequence such that $u^{\epsilon_j}(\cdot, t_i) \to \bar u^{\epsilon_j}$ and $u^{\epsilon_j}(\cdot, s_i) \to \bar v^{\epsilon_j}$ uniformly on compact subsets of $\Omega' \subset\subset \Omega$. From our discussion above, we know that $\bar u^{\epsilon_j} = \bar v^{\epsilon_j}$, therefore we get $|\bar u(x) - \bar v(x)| \leq 4\delta$. Hence, $\bar u = \bar v$.

\end{proof}


\bibliographystyle{plain}
\bibliography{MR_Ref}

\end{document}